\newtheorem{thm}{Theorem}[subsection]
\newtheorem{lem}[thm]{Lemma}
\newtheorem{prop}[thm]{Proposition}
\newtheorem{cor}[thm]{Corollary}
\theoremstyle{definition}
\newtheorem{defn}[thm]{Definition}
\newtheorem{nota}[thm]{Notation}
\newtheorem{rem}[thm]{Remark}
\newtheorem{exam}[thm]{Example}
\newtheorem{cons}[thm]{Construction}
\newcommand{\bC}{{\mathbb{C}}}
\newcommand{\bN}{{\mathbb{N}}}
\newcommand{\A}{{\mathcal{A}}}
\newcommand{\B}{{\mathcal{B}}}
\newcommand{\E}{{\mathcal{E}}}
\newcommand{\I}{{\mathcal{I}}}
\renewcommand{\L}{{\mathcal{L}}}
\renewcommand{\P}{{\mathcal{P}}}
\renewcommand{\S}{{\mathcal{S}}}
\newcommand{\X}{{\mathcal{X}}}
\newcommand{\Y}{{\mathcal{Y}}}
\renewcommand{\phi}{\varphi}
\newcommand{\qand}{\quad\text{and}\quad}
\newcommand{\qqand}{\qquad\text{and}\qquad}
\newcommand{\AND}{\text{ and }}
\newcommand{\alg}{\mathrm{alg}}
\tikzset{Box/.style={very thick, rounded corners}}
\tikzset{marked/.style={star, star point height = .75mm, star points =5, fill=black,minimum size=2mm, inner sep=0mm} }
\tikzset{verythickline/.style = {line width=7pt}}
\tikzset{thickline/.style = {line width=5pt}}
\tikzset{medthick/.style = {line width=3pt}}
\tikzset{med/.style = {line width=2pt}}
\tikzset{count/.style = {fill=white,circle,draw,thin, inner sep=2pt}}
\tikzset{rcount/.style = {fill=white,rectangle,draw,thin,inner sep=2pt, rounded corners}}
\tikzset{cpr/.style = {draw,fill=white,rectangle,thin, rounded corners}}
\newcommand{\bncb}{{BNC_{b}}}
\newcommand{\bncm}{{BNC_{m}}}
\newcommand{\bncs}{{BNC_{vs}}}
\begin{document}

\nocite{*}

\title[Independences and Partial $R$-Transforms in Bi-Free Probability]{Independences and Partial $R$-Transforms \\ in Bi-Free Probability}

\author{Paul Skoufranis}
\address{Department of Mathematics, Texas A\&M University, College Station, Texas, USA, 77843}
\email{pskoufra@math.tamu.edu}

\subjclass[2010]{46L54, 46L53}
\date{\today}
\keywords{Bi-Free Probability, Free Independence, Boolean Independence, Monotone Independence, Bi-Free Independence over Matrices, Partial $R$-Transforms}

\begin{abstract}
In this paper, we examine how various notions of independence in non-commutative probability theory arise in bi-free probability.  
We exhibit how Boolean and monotone independence occur from bi-free pairs of faces and establish a Kac/Loeve Theorem for bi-free independence.
In addition, we prove that bi-freeness is preserved under tensoring with matrices.
Finally, via combinatorial arguments, we construct partial $R$-transforms in two settings relating the moments and cumulants of a left-right pair of operators.
\end{abstract}

\maketitle

\section{Introduction}

In non-commutative probability theory there are several notions of independence which characterize the joint moments of collections of algebras in terms of the individual algebras.  By \cite{S1997} there are precisely three symmetric notions of independence: classical independence, free independence, and Boolean independence.  In addition, there are two antisymmetric notions of independence: monotone and anti-monotone independence.  These notions of independence have very similar theories such as the existence of cumulants and the ability to use power series transformations describing moments of convolutions of operators.  We refer the reader to \cites{V1985, SW1997, HS2011, M2001, S1994, B2005, L2004, P2008, P2009} for the development of these theories.

In \cite{V2013-1} Voiculescu introduced the notion of bi-free independence in order to simultaneously study the left and right reduced representations of algebras on reduced free product spaces.  Instead of being an independence for collections of algebras, which would reduce to one of the above five notions by \cites{M2002, M2003}, bi-free independence is an independence for pairs of algebras; one designated the left algebra and the other designated the right algebra.  In comparison with the other notions of independence, the cumulants for bi-free independence, known as the $(\ell, r)$-cumulants, were developed in \cites{MN2013, CNS2014-1, CNS2014-2} and a partial $R$-transform for a left-right pair of operators was discussed in \cite{V2013-2}.

Voiculescu noticed both classical and free independence can be viewed as specific instances of bi-free independence.  In particular, given two algebras $A_1$ and $A_2$ in a non-commutative probability space $(\A, \varphi)$, $A_1$ and $A_2$ are classically independent if and only if $(A_1, \mathbb{C}1_\A)$ and $(\mathbb{C} 1_\A, A_2)$ are bi-freely independent, and $A_1$ and $A_2$ are freely independent if and only if $(A_1, \mathbb{C} 1_\A)$ and $(A_2, \bC 1_\A)$ are bi-freely independent.  Furthermore, given a collection of bi-freely independent pairs of algebras, each left algebra is classically independent from every right algebra from a different pair, the left algebras are freely independent, and the right algebras are freely independent.

This paper investigates how other notions of independence occur inside bi-free probability, how bi-free independence shares many properties with free independence, and partial $R$-transforms for left-right pairs of operators.  One can either view these results as indications that classical, free, Boolean, and monotone independence are specific instances of bi-free independence, or as information on how bi-free independence works in specific instances in an effort to reduce the mystery surrounding bi-free probability.  There are seven sections to this paper, including this introduction, which are structured and summarized as follows.

Section \ref{sec:Background} reviews the notation, structures, and results from \cite{CNS2014-2} necessary to discuss bi-free independence with amalgamation.  In particular, the notions of bi-non-crossing partitions and diagrams, the bi-non-crossing M\"{o}bius function, $B$-$B$-non-commutative probability spaces, operator-valued bi-multiplicative functions, and bi-free families of pairs of $B$-faces are recalled.

Section \ref{sec:ClassicalAndFreeIndependence} examines how classical and free independence are specific instances of bi-free independence.  In particular, it is demonstrated how the moments of classical/free independent algebras occur by summing over certain subsets of bi-non-crossing partitions.  Furthermore, a bi-free Kac/Loeve Theorem is developed thereby demonstrating that any two bi-free pairs of algebras that remain bi-free after a non-trivial rotation must be bi-free central limit distributions.

Section \ref{sec:Boolean} examines how (operator-valued) Boolean independence arises inside bi-free probability.  To begin, Theorem \ref{thm:algebras-are-Boolean-independent} demonstrates how Boolean independent algebras arise from bi-free pairs of algebras under mild moment hypotheses.  Furthermore, given a collection $\{A_k\}_{k \in K}$ of Boolean independent algebras, we demonstrate two different constructions in order to obtain a family of bi-free pairs of algebras such that $\{A_k\}_{k \in K}$  embed into Boolean independent algebras generated by the bi-free pairs of algebras.  Only one of these embeddings is a homomorphism, but the non-multiplicative embedding allows one, without knowledge of \cite{SW1997}, to define the Boolean cumulants as specific $(\ell, r)$-cumulants.  In addition, it is shown how the moments of Boolean independent algebras occur by summing certain $(\ell, r)$-cumulants corresponding to bi-non-crossing partitions that resemble interval partitions.

Section \ref{sec:Monotone} examines how (operator-valued) (anti)monotone independence arises inside bi-free probability.  As in Section \ref{sec:Boolean}, we demonstrate how monotonically independent algebras arise from bi-free pairs of algebras under mild moment hypotheses and how every pair of monotonically independent algebras can be embedded into the bi-free setting.  In summary, classical, free, Boolean, and monotone independence can be realized as specific instances of bi-free independence and the moment functions of these independences are given by summing over specific bi-non-crossing partitions as roughly described below.
\begin{center}
\begin{tabular}{| l | l | }
\hline
\textbf{Independence} & \textbf{Bi-Non-Crossing Partitions Used}\\ \hline 
Free & partitions that have only left (or right) nodes \\ \hline
Classical & partitions that have both left and right nodes but \\
& no block contains both  \\ \hline
Boolean &  partitions that have both left and right nodes and \\
&  every block contains both \\ \hline
Monotone & partitions that have both left and right nodes but\\ 
& no block contains both and no block can connect \\
& two left nodes if it needs to pass a right node \\ \hline
\end{tabular}
\end{center}

Section \ref{sec:Tensoring} examines matrices of bi-free pairs of algebras.   Several advances in free probability, such as those of \cite{BMS2013}, revolve around the ability to use matrices of operators to simplify the computations for the moments of the operators.  Essential to this is the fact that matrices of freely independent algebras are free with amalgamation over $M_n(\bC)$ with respect to the amplified state.  Theorem \ref{thm:bi-free-preserved-when-tensored-with-Mn} demonstrates the same holds in the bi-free setting; matrices of bi-freely independent algebras are bi-free with amalgamation over $M_n(\bC)$.

Section \ref{sec:Transforms} uses the combinatorics of bi-free probability to examine partial $R$-transforms.  
The Cauchy transform and $R$-transform, which have played an essential role in free probability, were first examined by Voiculescu in \cite{V1986}. 
Subsequently Speicher in \cite{S1994} used combinatorics to derive the relation between the Cauchy transform and the $R$-transform.
Furthermore, Speicher and Woroudi used similar methods in \cite{SW1997} to derive expressions for Boolean independence.
In Section \ref{sec:Transforms} a simple, purely combinatorial proof of the partial $R$-transform for a left-right pair of operators constructed in \cite{V2013-2}*{Theorem 2.4} is given.  Finally, using similar techniques, another partial $R$-transform is constructed whose proof generalizes the proof for the Boolean transforms from \cite{SW1997}*{Proposition 2.1}.

\section{Background on Bi-Freeness with Amalgamation}
\label{sec:Background}

This section reviews the background and notation for bi-freeness with amalgamation required in the remainder of the paper.  We refer the reader to \cites{CNS2014-1, CNS2014-2} for more details.

In general, a map $\chi : \{1, \ldots, n\} \to \{\ell, r\}$ is used to designate whether each operator from a set of $n$ operators should be a left or a right operator and a map $\epsilon : \{1,\ldots, n\} \to K$ is used to determine which algebra from a collection of algebras indexed by $K$ each operator is from.

\subsection{Bi-Non-Crossing Partitions}  
Let $\P (n)$ denote the set of partitions on $n$ elements.  Given two partitions $\pi, \sigma \in \P (n)$, we say that $\pi$ is a refinement of $\sigma$, denoted $\pi \leq \sigma$, if every block of $\pi$ (a set in $\pi$) is contained in a single block of $\sigma$.  Refinement defines a partial ordering on $\P (n)$ turning $\P (n)$ into a lattice.

Given $\chi: \{1, \ldots, n\} \to \{\ell, r\}$, if 
\[
\chi^{-1}(\{\ell\}) = \{i_1<\cdots<i_p\} \qqand \chi^{-1}(\{r\}) = \{i_{p+1} > \cdots > i_n\},
\]
define the permutation $s_\chi$ on $\{1,\ldots, n\}$ by $s_\chi(k) = i_k$.  In addition, define the total ordering $\prec_\chi$ on $\{1,\ldots, n\}$ by $a\prec_\chi b$ if and only if $s_\chi^{-1} (a)< s_\chi^{-1}(b)$.  Notice $\prec_\chi$ corresponds to, instead of reading $\{1,\ldots, n\}$ in the traditional order, reading $\chi^{-1}(\{\ell\})$ in increasing order followed by reading $\chi^{-1}(\{r\})$ in decreasing order.  

A subset $V \subseteq \{1,\ldots, n\}$ is said to be a \emph{$\chi$-interval} if $V$ is an interval with respect to the ordering $\prec_\chi$.
In addition, $\min_{\prec_\chi}(V)$ and $\max_{\prec_\chi}(V)$ denote the minimal and maximal elements of $V$ with respect to the ordering $\prec_\chi$.

\begin{defn}
A partition $\pi \in \P (n)$ is said to be \emph{bi-non-crossing with respect to $\chi$} if the partition  $s_\chi^{-1}\cdot \pi$ (the partition formed by applying $s_\chi^{-1}$ to the blocks of $\pi$) is non-crossing.
Equivalently $\pi$ is bi-non-crossing if whenever there are blocks $U, V \in \pi$ with $u_1, u_2 \in U$ and $v_1, v_2 \in V$ such that $u_1 \prec_\chi v_1 \prec_\chi u_2 \prec_\chi v_2$, then $U = V$.  The set of bi-non-crossing partitions with respect to $\chi$ is denoted by $BNC(\chi)$.
\end{defn}

Note $BNC(\chi)$ inherits a lattice structure from $\P (n)$ and thus has minimal and maximal elements, denoted $0_\chi$ and $1_\chi$ respectively.

To each partition $\pi \in BNC(\chi)$ we associate a \emph{bi-non-crossing diagram} as follows:  place nodes along two dashed vertical lines, labelled $1$ to $n$ from top to bottom, such that the nodes on the left line correspond to those values for which $\chi(k) = \ell$ and nodes on the right line correspond to those values for which $\chi(k) = r$.  Then use lines to connect the nodes which are in the same block of $\pi$ in such a way that lines from different blocks do not cross.

\begin{exam}
If $\chi^{-1}(\{\ell\})=\{1,2,4\}$, $\chi^{-1}(\{r\})=\{3,5\}$, and
	\begin{align*}
		\pi=\left\{\vphantom{\sum}\{1,3\}, \{2,4,5\} \right\}= s_\chi\cdot \left\{\vphantom{\sum} \{1,5\}, \{2,3,4\} \right\},
	\end{align*}
then the bi-non-crossing diagram associated to $\pi$ is
	\begin{align*}
	\begin{tikzpicture}[baseline]
	\node[ left] at (-.5, 2) {1};
	\draw[fill=black] (-.5,2) circle (0.05);
	\node[left] at (-.5, 1.5) {2};
	\draw[fill=black] (-.5,1.5) circle (0.05);
	\node[right] at (.5, 1) {3};
	\draw[fill=black] (.5,1) circle (0.05);
	\node[left] at (-.5, .5) {4};
	\draw[fill=black] (-.5,.5) circle (0.05);
	\node[right] at (.5,0) {5};
	\draw[fill=black] (.5,0) circle (0.05);
	\draw[thick] (-.5,2) -- (0,2) -- (0, 1) -- (.5,1);
	\draw[thick] (-.5,1.5) -- (-.25,1.5) -- (-.25, 0) -- (.5,0);
	\draw[thick] (-.5,.5) -- (-.25,.5);
	\draw[thick, dashed] (-.5,2.25) -- (-.5,-.25) -- (.5,-.25) -- (.5,2.25);
	\end{tikzpicture}.
	\end{align*}
\end{exam}

In such diagrams, the vertical lines are referred to as spines.

\subsection{The Bi-Non-Crossing M\"{o}bius Function}  
The \emph{bi-non-crossing M\"{o}bius function} is the function
\[
\mu_{BNC} : \bigcup_{n\geq1}\bigcup_{\chi : \{1, \ldots, n\}\to\{\ell, r\}}BNC(\chi)\times BNC(\chi) \to \bC
\]
defined such that $\mu_{BNC}(\pi, \sigma) = 0$ unless $\pi$ is a refinement of $\sigma$, and otherwise defined recursively via the formulae
\[
\sum_{\substack{\tau \in BNC(\chi) \\\pi \leq \tau \leq \sigma}} \mu_{BNC}(\tau, \sigma) = \sum_{\substack{\tau \in BNC(\chi) \\ \pi \leq \tau \leq \sigma}} \mu_{BNC}(\pi, \tau) = \left\{
\begin{array}{ll}
1 & \mbox{if } \pi = \sigma  \\
0 & \mbox{otherwise}
\end{array} \right. .
\]
Due to the similarity of the lattice structures, the bi-non-crossing M\"{o}bius function is related to the non-crossing M\"{o}bius function $\mu_{NC}$ by the formula 
\[
\mu_{BNC}(\pi, \sigma) = \mu_{NC}(s^{-1}_\chi \cdot \pi, s^{-1}_\chi \cdot \sigma).
\]

\subsection{$\boldsymbol{B}$-$\boldsymbol{B}$-Non-Commutative Probability Space}  To discuss bi-freeness with amalgamation, the correct abstract structures are required.  For this section and the rest of the paper, $B$ denotes a unital algebra over $\bC$.  
\begin{defn}
A \emph{$B$-$B$-bimodule with a specified $B$-vector state} is a triple $(\X, \mathring{\X}, p)$ where $\X$ is a direct sum of $B$-$B$-bimodules
\[
\X = B \oplus \mathring{\X},
\]
and $p : \X \to B$ is the linear map
\[
p(b \oplus \eta) = b.
\]

Let $\L(\X)$ denote the set of linear operators on $\X$.  For each $b \in B$ define the operators $L_b, R_b \in \L(\X)$ by
\[
L_b(\eta) = b \cdot \eta \qquad \mbox{ and } \qquad R_b(\eta) = \eta \cdot b \qquad \text{ for all } \eta \in \X.
\]
The unital subalgebras of $\L(\X)$ defined by
\begin{align*}
\L_\ell(\X) &:= \{ Z \in \L(\X) \, \mid \, ZR_b = R_b Z \mbox{ for all }b \in B\} \text{ and}\\
\L_r(\X) &:= \{ Z \in \L(\X) \, \mid \, ZL_b = L_b Z \mbox{ for all }b \in B\}
\end{align*}
are called the \emph{left} and \emph{right algebras of} $\L(\X)$ respectively.
\end{defn}
It is important to note that $\L_\ell(\X)$ consists of all operators in $\L(\X)$ that are right $B$-linear and thus are potential operators for the left face of a pair of $B$-faces (see Definition \ref{defn:pair-of-B-faces}).
\begin{defn}
Given a $B$-$B$-bimodule with a specified $B$-vector state $(\X, \mathring{\X}, p)$, the \emph{expectation of $\L(\X)$ onto $B$} is the linear map $E_{\L(\X)} : \L(\X) \to B$ defined by
\[
E_{\L(\X)}(Z) = p(Z(1_B))
\]
for all $Z \in \L(\X)$.
\end{defn}
It was shown in \cite{CNS2014-2}*{Proposition 3.1.6} that $E_{\L(\X)}$ has two essential properties; namely
\[
E_{\L(\X)}(L_{b_1} R_{b_2} Z) = b_1 E_{\L(\X)}(Z) b_2
\]
for all $b_1, b_2 \in B$ and $Z \in \L(\X)$, and
\[
E_{\L(\X)}(ZL_b) = E_{\L(\X)}(ZR_b)
\]
for all $b \in B$ and $Z \in \L(\X)$.  Based on these properties, the following abstract structures were examined.
\begin{defn}
A \emph{$B$-$B$-non-commutative probability space} is a triple $(\mathcal{A}, E_\mathcal{A}, \varepsilon)$ where $\mathcal{A}$ is a unital algebra, $\varepsilon : B \otimes B^{\mathrm{op}} \to \mathcal{A}$ is a unital homomorphism such that $\varepsilon|_{B \otimes 1_B}$ and $\varepsilon|_{1_B \otimes B^{\mathrm{op}}}$ are injective, and $E_\mathcal{A} : \mathcal{A} \to B$ is a linear map such that
\[
E_{\mathcal{A}}(\varepsilon(b_1 \otimes b_2)Z) = b_1 E_{\mathcal{A}}(Z) b_2
\]
for all $b_1, b_2 \in B$ and $Z \in \mathcal{A}$, and
\[
E_{\mathcal{A}}(Z\varepsilon(b \otimes 1_B)) = E_{\mathcal{A}}(Z\varepsilon(1_B \otimes b))
\]
for all $b \in B$ and $Z \in \mathcal{A}$.

The unital subalgebras of $\A$ defined by
\begin{align*}
\mathcal{A}_\ell &:= \{ Z \in \mathcal{A}  \, \mid \, Z\varepsilon(1_B \otimes b) = \varepsilon(1_B \otimes b) Z \mbox{ for all }b \in B\} \text{ and}\\
\mathcal{A}_r &:= \{ Z \in \mathcal{A}  \, \mid \, Z\varepsilon(b \otimes 1_B) = \varepsilon(b \otimes 1_B) Z \mbox{ for all }b \in B\}
\end{align*}
are called the \emph{left} and \emph{right algebras of} $\A$ respectively.  To simplify notation, $L_b$ and $R_b$ are used in place of $\varepsilon(b \otimes 1_B)$ and $\varepsilon(1_B \otimes b)$ respectively. 
\end{defn}
In the case that $B = \bC$, one sees that $(\A, E, \varepsilon)$ is nothing more than a non-commutative probability space; that is, a pair $(\A, \varphi)$ where $\A$ is a unital algebra and $\varphi : \A \to \bC$ is a unital linear map.

It is useful to compare the notion of a $B$-$B$-non-commutative probability space with the notion of a $B$-non-commutative probability space used in free probability.
\begin{defn}
A \emph{$B$-non-commutative probability space} is a pair $(\A, \Phi)$ where $\A$ is a unital algebra containing $B$ (with $1_\A= 1_B$) and $\Phi : \A \to B$ is a unital linear map such that
\[
\Phi(b_1 Z b_2) = b_1 \Phi(Z) b_2
\]
for all $b_1, b_2 \in B$ and $Z \in \A$.
\end{defn}

\begin{rem}
\label{rem:B-ncps-from-B-B-ncps}
In free probability, one is interested in the joint $B$-moments
\[
\Phi(b_0 Z_1 b_1 Z_2 b_2 \cdots Z_n b_n) = b_0 \Phi(Z_1 b_1 Z_2 b_2 \cdots Z_n) b_n
\]
for $Z_1, \ldots, Z_n \in \A$ and $b_0, b_1, \ldots, b_n \in B$.  Such moments can be naturally recovered in the bi-free setting.  Indeed $\A$ is naturally a $B$-$B$-bimodule via left and right multiplication by $B$ and thus can be made into a $B$-$B$-bimodule with specified $B$-vector space via $p = \Phi$ and $\mathring{\X} = \ker(\Phi)$.  Hence $\L(\A)$ is a $B$-$B$-non-commutative probability state with
\[
E_{\L(\A)}(Z) = \Phi(Z 1_\A)
\]
for all $Z \in \L(\A)$.  

Notice $\A$ may be viewed as a unital subalgebra of both $\L_\ell(\A)$ and $\L_r(\A)$ by left and right multiplication on $\A$ respectively.  Viewing $\A \subseteq \L_\ell(\A)$, one can recover the joint $B$-moments of elements of $\A$ from $E_{\L(\A)}$ since
\[
E_{\L(\A)}(L_{b_0} Z_1 L_{b_1} Z_2 \cdots Z_n L_{b_n}) = \Phi(b_0 Z_1 b_1 Z_2 b_2 \cdots Z_n b_n) 
\]
for $Z_1, \ldots, Z_n \in \A$ and $b_0, b_1, \ldots, b_n \in B$.  

Furthermore, given a $B$-$B$-non-commutative probability space $(\A, E, \varepsilon)$, notice $(\A_\ell, E)$ is always a $B$-non-commutative probability space with $\varepsilon(B \otimes 1_B)$ as the copy of $B$.  Indeed
\[
E_\A(L_{b_1} Z L_{b_2}) = E_\A(L_{b_1} Z R_{b_2}) = E_\A(L_{b_1} R_{b_2} Z) = b_1 E_\A(Z) b_2
\]
for all $Z \in \A_\ell$ and $b_1, b_2 \in B$.  Furthermore, $(\A_r, E)$ is a $B^{\mathrm{op}}$-non-commutative probability space with $\varepsilon(1_B \otimes B^{\mathrm{op}})$ as the copy of $B^{\mathrm{op}}$.
\end{rem}

 The essential property of a $B$-$B$-non-commutative probability space is its ability to be concretely represented on a $B$-$B$-bimodule with a specified vector state.
\begin{thm}[\cite{CNS2014-2}*{Theorem 3.2.4}]
\label{thm:representing-bb-ncps}
Let $(\mathcal{A}, E_\mathcal{A}, \varepsilon)$ be a $B$-$B$-non-commutative probability space.  Then there exists a $B$-$B$-bimodule with a specified $B$-vector state $(\X, \mathring{\X}, p)$ and a unital homomorphism $\theta : \mathcal{A} \to \L(\X)$ such that $\theta(L_{b_1} R_{b_2}) = L_{b_1} R_{b_2}$,
\[
\theta(\mathcal{A}_\ell) \subseteq \L_\ell(\X), \quad
\theta(\mathcal{A}_r) \subseteq \L_r(\X), \quad
\mathrm{and} \quad
E_{\L(\X)}(\theta(Z)) = E_\mathcal{A}(Z)
\]
for all $b_1, b_2 \in B$ and $Z \in \mathcal{A}$.
\end{thm}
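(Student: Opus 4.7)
The plan is a GNS-style representation: take $\X := \A$ with the $B$-$B$-bimodule structure and specified vector state read off directly from the given data. First, I would define left and right actions of $B$ on $\X$ by
\[
b \cdot Y := L_b Y \qquad \text{and} \qquad Y \cdot b := R_b Y
\]
(multiplication in $\A$). Since $\varepsilon$ is a homomorphism out of $B \otimes B^{\mathrm{op}}$, the elements $L_b$ and $R_{b'}$ commute in $\A$, so these two actions commute and $\X$ is a genuine $B$-$B$-bimodule.

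To produce the decomposition $\X = B \oplus \mathring{\X}$, I would identify $B$ with $\varepsilon(B \otimes 1_B) \subseteq \A$ via the injection $b \mapsto L_b$, set $\mathring{\X} := \ker(E_\A)$, and take $p := E_\A$ under this identification. Applying the compatibility axiom $E_\A(L_{b_1} R_{b_2} Z) = b_1 E_\A(Z) b_2$ with $b_2 = 1_B$ and $Z = 1_\A$ (together with unitality $E_\A(1_\A) = 1_B$) gives $E_\A(L_b) = b$, so every $Z \in \A$ decomposes as $Z = L_{E_\A(Z)} + (Z - L_{E_\A(Z)})$ with the second summand in $\ker(E_\A)$. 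The sum is direct since $L_b \in \ker(E_\A)$ forces $b = E_\A(L_b) = 0$ and hence $L_b = 0$ by injectivity of $\varepsilon|_{B \otimes 1_B}$. Finally $\ker(E_\A)$ is closed under both actions: for $\eta \in \ker(E_\A)$, one has $E_\A(L_b \eta) = b E_\A(\eta) = 0$ and $E_\A(R_b \eta) = E_\A(\eta) b = 0$.

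For the homomorphism, I would define $\theta(Z)(Y) := ZY$, i.e., left multiplication in $\A$. This is tautologically a unital algebra homomorphism, and by construction $\theta(L_b)$ agrees with the operator $L_b \in \L(\X)$ coming from the bimodule structure, and similarly for $R_b$, so $\theta(L_{b_1} R_{b_2}) = L_{b_1} R_{b_2}$. For $Z \in \A_\ell$, the relation $Z R_b = R_b Z$ inside $\A$ translates directly into $\theta(Z) R_b = R_b \theta(Z)$ in $\L(\X)$, giving $\theta(\A_\ell) \subseteq \L_\ell(\X)$; the statement for $\A_r$ is symmetric. Lastly, since $1_B$ corresponds to $1_\A$ under our identification, $E_{\L(\X)}(\theta(Z)) = p(\theta(Z)(1_\A)) = p(Z) = E_\A(Z)$.

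The only real obstacle is verifying $\A = \varepsilon(B \otimes 1_B) \oplus \ker(E_\A)$ as $B$-$B$-bimodules, which reduces to the computation $E_\A(L_b) = b$ and the injectivity of $\varepsilon$ on $B \otimes 1_B$ — both immediate consequences of the axioms. Otherwise the construction is entirely formal: because we are working purely algebraically, no completion, quotient by a null space, or positivity argument is required, and the representation $\theta$ is nothing more than ordinary left multiplication in $\A$.
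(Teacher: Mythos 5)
There is a genuine gap, and it sits exactly where you declare the construction ``entirely formal'': the triple $(\A, \ker(E_\A), E_\A)$ you build is not a $B$-$B$-bimodule with a specified $B$-vector state. The definition requires $\X = B \oplus \mathring{\X}$ as a direct sum of \emph{$B$-$B$-bimodules}, so the copy of $B$ must be a sub-bimodule carrying the standard structure $b_1\cdot b\cdot b_2 = b_1bb_2$. Your decomposition $\A = \varepsilon(B\otimes 1_B)\oplus\ker(E_\A)$ is only a vector-space decomposition: under the right action $Y\cdot b = R_bY$ one has $L_{b'}\cdot b = R_bL_{b'} = L_{b'}R_b = \varepsilon(b'\otimes b)$, which in general is neither an element of $\varepsilon(B\otimes 1_B)$ nor equal to $L_{b'b}$. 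The failure is not repairable by a cleverer identification: a bimodule embedding $\iota\colon B\to\A$ would force $L_b\iota(1_B) = \iota(b) = R_b\iota(1_B)$ for all $b$, i.e.\ an element $\xi_0$ with $L_b\xi_0 = R_b\xi_0$ and $E_\A(\xi_0) = 1_B$, and such an element need not exist. For instance, take $B = \bC[x]/(x^2)$, $\A = \varepsilon(B\otimes B^{\mathrm{op}})$ with $\varepsilon$ injective and $E_\A(L_{b_1}R_{b_2}) = b_1b_2$: every solution of $(b\otimes 1)\xi_0 = (1\otimes b)\xi_0$ lies in the span of $x\otimes 1 + 1\otimes x$ and $x\otimes x$, so $E_\A(\xi_0)$ lies in the ideal generated by $x$ and is never $1_B$. (When $B=\bC$ one has $L_b = R_b$ and the problem disappears, which is why the scalar case hides it.)

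The tell-tale sign is that your argument never uses the axiom $E_\A(ZL_b) = E_\A(ZR_b)$. The repair is to quotient: set $N = \mathrm{span}\{Z(L_b - R_b) \mid Z\in\A,\ b\in B\}$, a left ideal of $\A$, and $\X := \A/N$ with $b_1\cdot[Z]\cdot b_2 := [L_{b_1}R_{b_2}Z]$ and $p([Z]) := E_\A(Z)$. The map $p$ is well defined precisely because the neglected axiom says $E_\A$ vanishes on $N$; in $\X$ one now has $[L_b] = [R_b]$, so $b\mapsto[L_b]$ is a genuine bimodule embedding of $B$ (injective since $L_b\in N$ forces $b = E_\A(L_b) = 0$), and $\X = B\oplus\ker(p)$ as bimodules via $[Z] = [L_{E_\A(Z)}] + [Z - L_{E_\A(Z)}]$. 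Since $N$ is a left ideal, $\theta(Z)[W] := [ZW]$ is well defined, and all of your remaining verifications ($\theta$ unital, $\theta(L_{b_1}R_{b_2}) = L_{b_1}R_{b_2}$, $\theta(\A_\ell)\subseteq\L_\ell(\X)$, $\theta(\A_r)\subseteq\L_r(\X)$, $E_{\L(\X)}\circ\theta = E_\A$) go through verbatim on the quotient. This quotient is the essential content of the cited construction; your closing claim that ``no quotient by a null space is required'' is exactly the step that fails.
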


\subsection{Operator-Valued Bi-Multiplicative Functions}  For discussions on bi-freeness with amalgamation, one needs the correct notions for moment and cumulant functions and the properties these functions have.

Given $\chi: \{1,\ldots, n\} \to \{\ell, r\}$ and a subset $X \subseteq \{1,\ldots, n\}$, let $\chi|_X : X \to \{\ell, r\}$ denote the restriction of $\chi$ to $X$.
Similarly, given an $n$-tuple of objects $(Z_1, \ldots, Z_n)$, let $(Z_1, \ldots, Z_n)|_X$ denote the $|X|$-tuple where the elements in positions not indexed by an element of $X$ are removed.
Finally, given $\pi \in BNC(\chi)$ such that $X$ is a union of blocks of $\pi$, let $\pi|_X \in BNC(\chi|_X)$ denote the bi-non-crossing partition formed by taking the blocks of $\pi$ contained in $X$.

\begin{defn}
\label{defn:recursive-definition-of-E-pi}
Let $(\A, E, \varepsilon)$ be a $B$-$B$-non-commutative probability space.  For $\chi : \{1,\ldots,n\} \to \{\ell, r\}$, $\pi \in BNC(\chi)$, and $Z_1, \ldots, Z_n \in \A$, we define 
\[
E_\pi(Z_1,\ldots, Z_n) \in B
\]
recursively as follows.
Let $V$ be the block of $\pi$ that terminates closest to the bottom of the bi-non-crossing diagram associated to $\pi$. Then:
\begin{itemize}
\item If $V = \{1,\ldots, n\}$ (that is, $\pi = 1_\chi)$,
\[
E_{1_\chi}(Z_1, \ldots, Z_n) := E(Z_1 \cdots Z_n).
\]
\item If $\min(V)$ is not adjacent to any spines of $\pi$, then $V = \{k+1, \ldots, n\}$ for some $k \in \{1, \ldots, n-1\}$  and
\[
E_\pi(Z_1, \ldots, Z_n) := \left\{
\begin{array}{ll}
E_{\pi|_{V^c}}(Z_1, \ldots, Z_k L_{E_{\pi|_V}(Z_{k+1},\ldots, Z_n)}) & \mbox{if } \chi(\min(V)) = \ell  \\
E_{\pi|_{V^c}}(Z_1, \ldots, Z_k R_{E_{\pi|_V}(Z_{k+1},\ldots, Z_n)}) & \mbox{if } \chi(\min(V)) = r
\end{array} \right..
\]
\item Otherwise, $\min(V)$ is adjacent to a spine. Let $W$ denote the block of $\pi$ corresponding to the spine adjacent to $\min(V)$ and
let $k$ be the smallest element of $W$ that is larger than $\min(V)$.  If $\chi(\min(V)) = \ell$ we defined
\[
E_\pi(Z_1, \ldots, Z_n) := E_{\pi|_{V^c}}((Z_1, \ldots, Z_{k-1}, L_{E_{\pi|_V}((Z_{1},\ldots, Z_n)|_V)} Z_k, Z_{k+1}, \ldots, Z_n)|_{V^c})
\]
and if $\chi(\min(V)) = r$ we define
\[
E_\pi(Z_1, \ldots, Z_n) := E_{\pi|_{V^c}}((Z_1, \ldots, Z_{k-1}, R_{E_{\pi|_V}((Z_{1},\ldots, Z_n)|_V)} Z_k, Z_{k+1}, \ldots, Z_n)|_{V^c}).
\]
\end{itemize}
\end{defn}
For an example expression, see \cite{CNS2014-2}*{Example 5.1.2}.  Observe that, in the context of Definition \ref{defn:recursive-definition-of-E-pi}, we ignore the notions of left and right operators and do not specify whether each entry of $E_\pi$ is a left or right operator based on $\chi$.
However, we are interested in making this restriction.
\begin{defn}
Let $(\mathcal{A}, E, \varepsilon)$ be a $B$-$B$-non-commutative probability space.  The \emph{bi-free operator-valued moment function}
\[
\mathcal{E} : \bigcup_{n\geq 1} \bigcup_{\chi : \{1,\ldots, n\} \to \{\ell, r\}} BNC(\chi) \times \mathcal{A}_{\chi(1)} \times \cdots \times \mathcal{A}_{\chi(n)} \to B
\]
is defined by
\[
\mathcal{E}_\pi(Z_1, \ldots, Z_n) = E_\pi(Z_1, \ldots, Z_n)
\]
for each $\chi : \{1, \ldots, n\} \to \{\ell, r\}$, $\pi \in BNC(\chi)$, and $Z_k \in \mathcal{A}_{\chi(k)}$.
\end{defn}

In the case that $B = \bC$, recall $E : \A \to \bC$ is a linear map denoted by $\varphi$.  In this case, if $Z_1, \ldots, Z_n \in A$ and $\pi \in BNC(\chi)$ has blocks $V_t = \{k_{t, 1}<\cdots<k_{t,m_t}\}$ for $t \in \{1, \ldots, q\}$, then
\[
\varphi_\pi(Z_1, \ldots, Z_n) = \prod_{t=1}^q \varphi(Z_{k_{t, 1}}\cdots Z_{k_{t, m_t}}).
\]

\begin{defn}
\label{def:kappa}
Let $(\mathcal{A}, E, \varepsilon)$ be a $B$-$B$-non-commutative probability space.  The \emph{operator-valued bi-free cumulant function} 
\[
\kappa : \bigcup_{n\geq 1} \bigcup_{\chi : \{1,\ldots, n\} \to \{\ell, r\}} BNC(\chi) \times \mathcal{A}_{\chi(1)} \times \cdots \times \mathcal{A}_{\chi(n)} \to B
\]
is defined by
\[
\kappa_\pi(Z_1,\ldots, Z_n) = \sum_{\substack{\sigma \in BNC(\chi) \\ \sigma \leq\pi}}  \E_\sigma(Z_1, \ldots, Z_n) \mu_{BNC}(\sigma, \pi)
\]
for each $\chi : \{1, \ldots, n\} \to \{\ell, r\}$, $\pi \in BNC(\chi)$, and $Z_k \in \mathcal{A}_{\chi(k)}$.
\end{defn}

Both the operator-valued moment and cumulant functions are special functions which \cite{CNS2014-2} calls bi-multiplicative.  Bi-multiplicative functions have reduction properties which allows one to compute their values once one knows their values on full bi-non-crossing partitions.  We refer the reader to \cite{CNS2014-2}*{Definition 4.2.1} for the rigorous definition of a bi-multiplicative function but one may heuristically think of a bi-multiplicative function based on the notion of a multiplicative function in free probability as follows.  Given $\pi \in BNC(\chi)$ and a bi-multiplicative map $\Phi$, each reduction property one may apply to $\Phi_\pi(Z_1, \ldots, Z_n)$ follows by
\begin{enumerate}
\item viewing the non-crossing partition $s_\chi^{-1}  \cdot \pi$,
\item rearranging the $n$-tuple $(Z_1, \ldots, Z_n)$ to $(Z_{s_\chi(1)}, \ldots, Z_{s_\chi(n)})$, 
\item replacing any occurrences of $L_bZ_j$, $Z_j L_b$, $R_b Z_j$, and $Z_j R_b$ with $bZ_j$, $Z_j b$, $Z_j b$, and $bZ_j$ respectively,
\item applying one of the properties of a multiplicative map from \cite{NSS2002}*{Section 2.2},
\item and reversing the above identifications.
\end{enumerate}  

Using the notion of bi-multiplicativity in the case that $B = \bC$, if $Z_1, \ldots, Z_n \in A$ and $\pi \in BNC(\chi)$ has blocks $V_t = \{k_{t, 1}<\cdots<k_{t,m_t}\}$ for $t \in \{1, \ldots, q\}$, then
\[
\kappa_\pi(Z_1, \ldots, Z_n) = \prod_{t=1}^q \kappa_{\pi|_{V_t}}((Z_1,\ldots, Z_n)|_{V_t}).
\]

\subsection{Bi-Free Families of Pairs of $\boldsymbol{B}$-Faces}

We are now in a position to discuss bi-freeness with amalgamation.
\begin{defn}
\label{defn:pair-of-B-faces}
Let $(\mathcal{A}, E_\mathcal{A}, \varepsilon)$ be a $B$-$B$-non-commutative probability space.  A \emph{pair of $B$-faces of} $\A$ is a pair $(C, D)$ of unital subalgebras of $\A$ such that
\[
\varepsilon(B \otimes 1_B) \subseteq C \subseteq \A_\ell \qquad \mathrm{and}\qquad \varepsilon(1_B \otimes B^{\mathrm{op}}) \subseteq D \subseteq \A_r.
\]

A family $\{(C_k, D_k)\}_{k \in K}$ of pair of $B$-faces of $\A$ is said to be \emph{bi-free with amalgamation over $B$} (or \emph{simply bi-free over $B$}) if there exist $B$-$B$-bimodules with specified $B$-vector states $\{(\X_k, \mathring{\X}_k, p_k)\}_{k \in K}$ and unital homomorphisms $l_k : C_k \to \L_{\ell}(\X_k)$ and $r_k : D_k \to \L_{r}(\X_k)$ such that the joint distribution of $\{(C_k, D_k)\}_{k \in K}$ with respect to $E_\mathcal{A}$ is equal to the joint distribution of the images of 
\[
\{((\lambda_k \circ l_k)(C_k), (\rho_k \circ r_k)(D_k))\}_{k \in K}
\]
inside $\L(\ast_{k \in K} \X_k)$ with respect to $E_{\L(\ast_{k \in K} \X_k)}$ where $\lambda_k$ and $\rho_k$ denote the left and right regular representation onto $\X_k \subseteq \ast_{k \in K} \X_k$ respectively.
\end{defn}

The following was the main result of \cite{CNS2014-2}.  In that which follows, note a map $\epsilon : \{1,\ldots, n\} \to K$ defines an element of $\P (n)$ whose blocks are $\{\epsilon^{-1}(\{k\})\}_{k \in K}$.
\begin{thm}[\cite{CNS2014-2}*{Theorem 7.1.4 and Theorem 8.1.1}]
\label{thm:bifree-classifying-theorem}
Let $(\mathcal{A}, E_\mathcal{A}, \varepsilon)$ be a $B$-$B$-non-commutative probability space and let $\{(C_k, D_k)\}_{k \in K}$ be a family of pairs of $B$-faces of $\A$.  Then $\{(C_k, D_k)\}_{k \in K}$ are bi-free with amalgamation over $B$ if and only if for all $\chi : \{1,\ldots, n\} \to \{\ell, r\}$, $\epsilon : \{1,\ldots, n\} \to K$, and 
\[
Z_k \in \left\{
\begin{array}{ll}
C_{\epsilon(k)} & \mbox{if } \chi(k) = \ell  \\
D_{\epsilon(k)} & \mbox{if } \chi(k) = r
\end{array} \right.,
\]
the formula
\begin{align}
E_{\A}(Z_1 \cdots Z_n) = \sum_{\pi \in BNC(\chi)} \left[ \sum_{\substack{\sigma\in BNC(\chi)\\\pi\leq\sigma\leq\epsilon}}\mu_{BNC}(\pi, \sigma) \right] \mathcal{E}_{\pi}(Z_1,\ldots, Z_n)   \label{eq:universal-polys}
\end{align}
holds.  Equivalently $\{(C_k, D_k)\}_{k \in K}$ are bi-free with amalgamation over $B$ if and only if
\[
\kappa_{1_\chi}(Z_1, \ldots, Z_n) = 0
\]
provided $\epsilon$ is not constant.
\end{thm}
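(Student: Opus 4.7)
The plan is to split the equivalence into two pieces: (i) a combinatorial identity relating the moment formula \eqref{eq:universal-polys} to the vanishing-of-mixed-cumulants condition, and (ii) the identification of either condition with the definition of bi-freeness via concrete computation on the bi-free product bimodule.

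For (i), I would apply M\"{o}bius inversion on $BNC(\chi)$. Inverting Definition \ref{def:kappa} gives $\E_\pi = \sum_{\sigma \leq \pi} \kappa_\sigma$, so evaluating at $\pi = 1_\chi$ produces
\[
E_{\A}(Z_1 \cdots Z_n) \;=\; \sum_{\sigma \in BNC(\chi)} \kappa_\sigma(Z_1,\ldots,Z_n).
\]
Bi-multiplicativity of $\kappa$ (applied block-by-block to $\sigma$) lets the vanishing condition force $\kappa_\sigma = 0$ unless every block of $\sigma$ lies inside a single fiber of $\epsilon$, i.e. unless $\sigma \leq \epsilon$. Under this, the sum reduces to $\sum_{\sigma \leq \epsilon} \kappa_\sigma$, and resubstituting the M\"{o}bius formula for $\kappa_\sigma$ followed by switching the order of summation yields exactly \eqref{eq:universal-polys}. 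The converse is an induction on $n$: assuming \eqref{eq:universal-polys} for all shorter sequences, M\"{o}bius inversion expresses $\kappa_{1_\chi}$ as a signed sum of moments in which the M\"{o}bius-weighted inner brackets collapse to $0$ precisely when $\epsilon$ is not constant.

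For (ii), the key step is to verify the cumulant vanishing on the concrete bi-free product $\L(\ast_{k\in K} \X_k)$. I would induct on $n$ and analyze the action of $(\lambda_{k_1}\circ l_{k_1})(Z_1) \cdots (\rho_{k_n}\circ r_{k_n})(Z_n)$ on the vacuum. Decomposing each operator into its $\varepsilon(B \otimes B^{\mathrm{op}})$-part and its centered part, reduced creations from different fibers live in distinct summands of $\ast_k \X_k$, so only contributions whose fiber labels are matched by the action survive the application of $p$. Tracking the matching via $\prec_\chi$ and the spine structure of Definition \ref{defn:recursive-definition-of-E-pi} shows that the surviving terms are indexed exactly by partitions $\pi \leq \epsilon$, giving the vanishing of $\kappa_{1_\chi}$ whenever $\epsilon$ is non-constant. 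The reverse direction then comes for free: since the cumulant condition (through the already-established step (i)) determines all $B$-valued moments from the individual distributions of the pairs, any family satisfying the condition must share its joint distribution with its bi-free product realization, and hence is bi-free in the sense of Definition \ref{defn:pair-of-B-faces}.

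The main obstacle is the operator-valued bookkeeping in step (ii). When $B=\bC$, orthogonality of reduced tensors from distinct fibers makes the vanishing of mixed cumulants nearly immediate. In the $B$-valued case, one must propagate $B$-coefficients across the bimodule action while preserving the reduction properties of bi-multiplicative functions from \cite{CNS2014-2}*{Definition 4.2.1}. The combinatorics of $BNC(\chi)$ via $s_\chi$ and the spine/block recursion of Definition \ref{defn:recursive-definition-of-E-pi} are tailored to make this accounting close, but verifying that the induction really does pick out only partitions $\pi \leq \epsilon$ requires careful tracking of $L_b$ and $R_b$ crossing the bimodule structure, matched against the heuristic dictionary (1)--(5) following Definition \ref{def:kappa}.
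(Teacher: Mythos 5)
This theorem is imported from \cite{CNS2014-2} (Theorems 7.1.4 and 8.1.1) and the paper under review gives no proof of it, so there is no in-paper argument to compare against; I will assess your plan against the strategy of the cited reference, which it correctly mirrors. Your step (i) is essentially complete and correct: M\"{o}bius inversion of Definition \ref{def:kappa} gives $\E_\pi(Z_1,\ldots,Z_n)=\sum_{\sigma\leq\pi}\kappa_\sigma(Z_1,\ldots,Z_n)$, bi-multiplicativity (together with multilinearity, so that a vanishing block cumulant inserted as $L_0$ or $R_0$ kills the whole nested expression) forces $\kappa_\sigma=0$ unless $\sigma\leq\epsilon$, and interchanging the order of summation in $\sum_{\sigma\leq\epsilon}\sum_{\pi\leq\sigma}\E_\pi\,\mu_{BNC}(\pi,\sigma)$ yields \eqref{eq:universal-polys}; the converse by induction on $n$, isolating the $\sigma=1_\chi$ term, is the standard argument and is exactly the content of \cite{CNS2014-2}*{Theorem 8.1.1}.

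The genuine gap is in step (ii), which you state as a plan rather than execute, and which is where essentially all of the difficulty of \cite{CNS2014-2}*{Theorem 7.1.4} resides. The assertion that, after decomposing each operator into its $\varepsilon(B\otimes B^{\mathrm{op}})$-part and its centered part, ``the surviving terms are indexed exactly by partitions $\pi\leq\epsilon$'' with the correct coefficients $\sum_{\pi\leq\sigma\leq\epsilon}\mu_{BNC}(\pi,\sigma)$ is precisely the theorem in the concrete model, and it does not follow from orthogonality considerations alone: in the operator-valued setting the summands of $\ast_{k\in K}\X_k$ are not orthogonal complements but $B$-$B$-bimodule direct summands, the ``constant'' part of a centered vector is extracted by $p$ rather than by an inner product, and left operators act on the leftmost tensor leg while right operators act on the rightmost, so the matching of fiber labels to blocks must be organized by $\prec_\chi$ through a genuine multi-stage induction (this is the content of Sections 6--7 of \cite{CNS2014-2}). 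You correctly identify this as the main obstacle, but identifying an obstacle is not the same as overcoming it; as written, the core of the proof is missing. The final ``reverse direction comes for free'' paragraph is fine once (ii) and the uniqueness claim (that \eqref{eq:universal-polys} determines all joint moments from the individual pair distributions, via the observation that the coefficient of $\E_{1_\chi}$ vanishes for non-constant $\epsilon$) are both in place.
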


\section{Classical and Free Independence in Bi-Free Probability}
\label{sec:ClassicalAndFreeIndependence}

This section further demonstrates how free and classical independence arise using the $(\ell, r)$-cumulants.   Our attention is restricted to the scalar setting in this section unless otherwise specified.

\subsection{Free Independence via $\boldsymbol{(\ell, r)}$-Cumulants}

It is not difficult to use the $(\ell, r)$-cumulants to construct the state for which algebras are freely independent.  Indeed let $\{A_k\}_{k \in K}$ be unital subalgebras of a non-commutative probability space $(\A,\varphi)$ and let $\psi$ be the unique state on $\ast_{k \in K} A_k$ determined by $\varphi$ for which $\{A_k\}_{k \in K}$ are freely independent (that is, $\psi = \ast_{k \in K} \varphi|_{A_k}$).  Then $\psi$ can be realized by summing up certain $(\ell, r)$-cumulants constructed from $\varphi$.  Indeed for all $\epsilon : \{1,\ldots, n\} \to K$ and for all $Z_k \in A_{\epsilon(k)}$, 
\begin{align}
\psi(Z_1 \cdots Z_n) = \sum_{\substack{\pi \in BNC(\chi) \\ \pi \leq \epsilon}} \kappa_\pi(Z_1, \ldots, Z_n) \label{eq:free-via-lr-cumulants}
\end{align}
where $\chi : \{1, \ldots, n\} \to \{\ell, r\}$ is either constant map and $\pi\leq \epsilon$ denotes $\pi$ is a refinement of the partition with blocks $\{\epsilon^{-1}(\{k\})\}_{k \in K}$ (that is, $\pi$ may be coloured via $\epsilon$).  Indeed the above formula holds as the $(\ell, r)$-cumulants reduce to the free cumulants when $\chi$ is constant.  Note that the above generalizes to the operator-valued setting when we restrict each $Z_k$ to be an element of $\A_\ell$.

\subsection{Classical Independence via $\boldsymbol{(\ell, r)}$-Cumulants}

It is also possible to use the $(\ell, r)$-cumulants to construct the state for which a pair of algebras are classically independent.  To do so, we need the following collection of bi-non-crossing partitions.
\begin{defn}
Given a map $\chi : \{1,\ldots, n\} \to \{\ell, r\}$, a bi-non-crossing partition $\pi \in BNC(\chi)$ is said to be \emph{vertically split} if whenever $V$ is a block of $\pi$, either $V \subseteq \chi^{-1}(\{\ell\})$ or $V \subseteq \chi^{-1}(\{r\})$. The set of vertically split bi-non-crossing partitions is denoted by $\bncs(\chi)$.
\end{defn}
Notice if $(\A, \varphi)$ is a non-commutative probability space, $Z_1, \ldots, Z_n \in A$, and $\chi : \{1, \ldots, n\} \to \{\ell, r\}$ is such that
\[
\chi^{-1}(\{\ell\}) = \{i_1 < i_2 < \cdots < i_k\} \qqand \chi^{-1}(\{r\}) = \{j_1 < j_2 < \cdots < j_m\}
\]
then
\[
\sum_{\pi \in \bncs(\chi)} \kappa_\pi(Z_1,\ldots, Z_n) = \varphi(Z_{i_1} Z_{i_2} \cdots Z_{i_k}) \varphi(Z_{j_1} Z_{j_2} \cdots Z_{j_m})
\]
since
\[
\kappa_\pi(Z_1, \ldots, Z_n) = \kappa_{\pi|_V}((Z_1, \ldots, Z_n)|_V) \kappa_{\pi|_{V^c}}((Z_1,\ldots, Z_n)|_{V^c})
\]
whenever $V$ is a union of blocks of $\pi$.  In particular, if $A_1$ and $A_2$ are unital subalgebras of $\A$ and $\psi$ is the unique state on $A_1 \ast A_2$ determined by $\varphi$ for which $A_1$ and $A_2$ are classically independent (that is, $\psi = \varphi|_{A_1} \otimes \, \varphi|_{A_2}$), then for all $\epsilon : \{1,\ldots, n\} \to \{1,2\}$ and for all $Z_k \in A_{\epsilon(k)}$
\begin{align}
\psi(Z_1 \cdots Z_n) = \sum_{\pi \in \bncs(\chi_\epsilon)} \kappa_\pi(Z_1, \ldots, Z_n) = \sum_{\substack{\pi \in BNC(\chi_\epsilon) \\ \pi \leq \epsilon}} \kappa_\pi(Z_1, \ldots, Z_n) \label{eq:independent-via-LR-cumulants}
\end{align}
where $\chi_\epsilon : \{1, \ldots, n\} \to \{\ell, r\}$ is defined by
\[
\chi_\epsilon(k) = \left\{
\begin{array}{ll}
\ell & \mbox{if } \epsilon(k) = 1  \\
r & \mbox{if } \epsilon(k) = 2 
\end{array} \right. .
\]

In the operator-valued setting, given a $B$-$B$-non-commutative probability space $(\A, E, \varepsilon)$, unital subalgebras $A_1 \subseteq \A_\ell$ and $A_2 \subseteq \A_r$, $\epsilon : \{1,\ldots, n\} \to \{1,2\}$, and $Z_k \in A_{\epsilon(k)}$, it can be shown using the properties of bi-multiplicative functions that
\[
\sum_{\pi \in \bncs(\chi_\epsilon)} \kappa_\pi(Z_1, \ldots, Z_n) = E(Z_{i_1} \cdots Z_{i_k}) E(Z_{j_1} \cdots Z_{j_m})
\]
where
\[
\{i_1 < i_2 < \cdots < i_k\} = \chi^{-1}_\epsilon(\{\ell\}) \qqand  \{j_1 < j_2 < \cdots < j_m\} = \chi^{-1}_\epsilon(\{r\}).
\]

\subsection{The Bi-Free Kac/Loeve Theorem} 
\label{sec:Kac-Loeve-Theorem}

In \cite{N1996}*{Theorem 5.3} the free Kac/Loeve Theorem was proved demonstrating that any pair of freely independent random variables for which a non-trivial rotation remained free must have been free central limit distributions; that is, semicircular variables.  Said theorem follows by the linearity of each entry of the free cumulants.

In \cite{V2013-1}*{Theorem 7.4} it was shown that the bi-free central limits distributions arise precisely when all $(\ell, r)$-cumulants of order at least three vanish.  In particular, if a pair $(T, S)$ is a bi-free central limit distribution, then there are four values to specify:
\[
\kappa_{(\ell, \ell)}(T, T), \quad \kappa_{(\ell, r)}(T, S), \quad \kappa_{(r, \ell)}(S, T), \qand \kappa_{(r,r)}(S, S).
\]

The following generalizes \cite{N1996}*{Theorem 5.3} to the bi-free setting.  One can use the same arguments to generalize \cite{N1996}*{Theorem 5.1} to the bi-free setting as well.

\begin{thm}[Bi-Free Kac/Loeve Theorem]
\label{thm:Kac-Loeve}
Let $(\A, \varphi)$ be a non-commutative probability space.  Suppose $(T_1, S_1)$ and $(T_2, S_2)$ are bi-free two-faced families in $\A$ such that $\varphi(T_k) = 0 = \varphi(S_k)$ for $k\in \{1,2\}$.  For a fixed $\theta \in (0, \frac{\pi}{2})$, let
\begin{align*}
T_3 = \cos(\theta) T_1 + \sin(\theta) T_2, &\quad &
S_3 &= \cos(\theta) S_1 + \sin(\theta) S_2,\\
T_4 = -\sin(\theta) T_1 + \cos(\theta) T_2, &\qquad\text{and} &
S_4 &=-\sin(\theta) S_1 + \cos(\theta) S_2.
\end{align*}
If $(T_3, S_3)$ and $(T_4, S_4)$ are bi-freely independent, then $(T_1, S_1)$ and $(T_2, S_2)$ must be bi-free central limit distributions with equal second order $(\ell, r)$-cumulants.

Conversely, if $(T_1, S_1)$ and $(T_2, S_2)$ are bi-free two-faced families in $\A$ and are bi-free central limit distributions with equal second order $(\ell, r)$-cumulants, then $(T_3, S_3)$ and $(T_4, S_4)$ are bi-freely independent.
\end{thm}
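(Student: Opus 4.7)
The plan is to follow the strategy of the original free Kac/Loeve theorem \cite{N1996}*{Theorem 5.3}, exploiting multilinearity of the bi-free cumulants together with the vanishing-of-mixed-cumulants characterization of bi-freeness from Theorem \ref{thm:bifree-classifying-theorem}. For the forward direction, fix $\chi : \{1, \ldots, n\} \to \{\ell, r\}$ with $n \geq 2$ and a non-constant $\epsilon : \{1, \ldots, n\} \to \{3, 4\}$, and write $Y^{(m)}_\ell := T_m$ and $Y^{(m)}_r := S_m$. Expressing each $Y^{(\epsilon(k))}_{\chi(k)}$ in terms of $Y^{(1)}_{\chi(k)}$ and $Y^{(2)}_{\chi(k)}$ via the rotation formulas and using multilinearity of $\kappa_{1_\chi}$, and then discarding all mixed cumulants by bi-freeness of $(T_1, S_1)$ and $(T_2, S_2)$, one obtains
\[
\kappa_{1_\chi}(Y^{(\epsilon(1))}_{\chi(1)}, \ldots, Y^{(\epsilon(n))}_{\chi(n)}) = \left(\prod_{k=1}^n \alpha_{\epsilon(k), 1}\right)\kappa^{(1)}_\chi + \left(\prod_{k=1}^n \alpha_{\epsilon(k), 2}\right)\kappa^{(2)}_\chi,
\]
where $\alpha_{3,1} = p$, $\alpha_{3,2} = q$, $\alpha_{4,1} = -q$, $\alpha_{4,2} = p$ with $p = \cos\theta$, $q = \sin\theta$, and $\kappa^{(m)}_\chi := \kappa_{1_\chi}(Y^{(m)}_{\chi(1)}, \ldots, Y^{(m)}_{\chi(n)})$.

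Bi-freeness of $(T_3, S_3)$ and $(T_4, S_4)$ forces the left-hand side to vanish, and because the two coefficients depend on $\epsilon$ only through $a := |\epsilon^{-1}(\{3\})|$, each $a \in \{1, \ldots, n-1\}$ yields the single linear equation
\[
(-1)^{n-a} p^{a} q^{n-a}\, \kappa^{(1)}_\chi + q^{a} p^{n-a}\, \kappa^{(2)}_\chi = 0.
\]
For $n = 2$ the lone equation (at $a = 1$) reduces to $\kappa^{(1)}_\chi = \kappa^{(2)}_\chi$, giving equality of the second-order $(\ell, r)$-cumulants. For $n \geq 3$, the two equations at $a = 1$ and $a = 2$ have determinant $(-1)^{n-1}(pq)^{n-1}$, which is nonzero because $\theta \in (0, \pi/2)$ forces $p, q > 0$; hence $\kappa^{(1)}_\chi = \kappa^{(2)}_\chi = 0$. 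Letting $\chi$ range over all sign patterns of each length $n \geq 3$ shows that all $(\ell, r)$-cumulants of order at least three of $(T_j, S_j)$ vanish for $j \in \{1, 2\}$; together with the hypothesis $\varphi(T_k) = \varphi(S_k) = 0$ (which gives the first-order cumulants) this confirms that $(T_1, S_1)$ and $(T_2, S_2)$ are bi-free central limit distributions with matching second-order $(\ell, r)$-cumulants.

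The converse simply runs the same expansion backwards: under the central limit hypothesis, $\kappa^{(j)}_\chi = 0$ for $|\chi| \geq 3$ kills both terms on the right for $n \geq 3$, while for $n = 2$ the common value $\kappa^{(1)}_\chi = \kappa^{(2)}_\chi$ factors out of the non-constant patterns $(3,4)$ and $(4,3)$, whose coefficient sums are $-pq + pq = 0$; linearity of $\varphi$ also transfers the centering to $T_3, T_4, S_3, S_4$. Hence every mixed $(\ell, r)$-cumulant of $(T_3, S_3)$ and $(T_4, S_4)$ vanishes, and Theorem \ref{thm:bifree-classifying-theorem} delivers bi-freeness. The main technical point is selecting an $\epsilon$-pair whose linear system is non-degenerate for every admissible $\theta$; the pair $a = 1$ and $a = 2$ works uniformly, whereas the more symmetric pair $a = 1$ and $a = n-1$ would degenerate at $\theta = \pi/4$ when $n$ is even, which is the only bookkeeping subtlety one must navigate.
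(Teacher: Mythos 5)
Your proposal is correct and follows essentially the same route as the paper: expand $\kappa_{1_\chi}$ of the rotated variables by multilinearity, discard mixed cumulants of $(T_1,S_1)$ and $(T_2,S_2)$ by Theorem \ref{thm:bifree-classifying-theorem}, and solve a single equation at order $2$ and a nonsingular $2\times 2$ system at each order $n\geq 3$. The only cosmetic difference is your choice of the patterns $a=1$ and $a=2$ (determinant $(-1)^{n-1}(\sin\theta\cos\theta)^{n-1}$) where the paper uses the mirror-image pair $a=n-1$ and $a=n-2$ (determinant $-\sin^n\theta\cos^n\theta$); both are nonzero for all $\theta\in(0,\tfrac{\pi}{2})$, so nothing substantive changes.
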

\begin{proof}
The proof easily follows from Theorem \ref{thm:bifree-classifying-theorem} (or simply \cite{CNS2014-1}*{Theorem 4.3.1}) and the linearity of the bi-free cumulants in each entry.  For $m \in \{1,2,3,4\}$ and $k \in \{\ell, r\}$, let
\[
Z_{k,m}  = \left\{
\begin{array}{ll}
T_m & \mbox{if } k = \ell  \\
S_m & \mbox{if } k=r
\end{array} \right. .
\]

Suppose $(T_3, S_3)$ and $(T_4, S_4)$ are bi-freely independent.  To see that  $(T_1, S_1)$ and $(T_2, S_2)$ have equal second order $(\ell, r)$-cumulants, let $\chi : \{1,2\} \to \{\ell, r\}$ be arbitrary.  Then
\begin{align*}
0 &= \kappa_{1_\chi}(Z_{\chi(1), 3}, Z_{\chi(2), 4}) = -\cos(\theta)\sin(\theta) \kappa_{1_\chi}(Z_{\chi(1), 1}, Z_{\chi(2), 1}) +\cos(\theta)\sin(\theta) \kappa_{1_\chi}(Z_{\chi(1), 2}, Z_{\chi(2), 2}).
\end{align*}
Since $\theta \in (0, \frac{\pi}{2})$, we obtain 
\[
\kappa_{1_\chi}(Z_{\chi(1), 1}, Z_{\chi(2), 1}) = \kappa_{1_\chi}(Z_{\chi(1), 2}, Z_{\chi(2), 2}).
\]

To see all higher-order $(\ell, r)$-cumulants of $(T_1, S_1)$ and $(T_2, S_2)$ are zero, let $\chi : \{1, \ldots, n\} \to \{\ell, r\}$ for $n \geq 3$ be arbitrary.  Then
\begin{align*}
0 &= \kappa_{1_\chi}(Z_{\chi(1), 3}, Z_{\chi(2), 3}, \ldots, Z_{\chi(n-2), 3}, Z_{\chi(n-1), 3}, Z_{\chi(n), 4}) \\
&= -\cos^{n-1}(\theta) \sin(\theta)\kappa_{1_\chi}(Z_{\chi(1), 1}, \ldots,  Z_{\chi(n), 1}) +\sin^{n-1}(\theta) \cos(\theta)\kappa_{1_\chi}(Z_{\chi(1), 2}, \ldots,  Z_{\chi(n), 2})
\end{align*}
and
\begin{align*}
0 &=  \kappa_{1_\chi}(Z_{\chi(1), 3}, Z_{\chi(2), 3}, \ldots, Z_{\chi(n-2), 3}, Z_{\chi(n-1), 4}, Z_{\chi(n), 4}) \\
 &= \cos^{n-2}(\theta) \sin^2(\theta)\kappa_{1_\chi}(Z_{\chi(1), 1}, \ldots,  Z_{\chi(n), 1})  +\sin^{n-2}(\theta) \cos^2(\theta)\kappa_{1_\chi}(Z_{\chi(1), 2}, \ldots,  Z_{\chi(n), 2}).
\end{align*}
Since the matrix
\[
\left[  \begin{array}{cc} -\cos^{n-1}(\theta) \sin(\theta) & \sin^{n-1}(\theta) \cos(\theta) \\  \cos^{n-2}(\theta) \sin^2(\theta) & \sin^{n-2}(\theta) \cos^2(\theta)  \end{array} \right]
\]
has determinant $-\sin^n(\theta) \cos^n(\theta)$, which is non-zero as $\theta \in (0, \frac{\pi}{2})$, the above system of equations imply
\[
\kappa_{1_\chi}(Z_{\chi(1), 1}, \ldots,  Z_{\chi(n), 1}) = 0 = \kappa_{1_\chi}(Z_{\chi(1), 2}, \ldots,  Z_{\chi(n), 2}).
\]

For the converse, one can easily use the fact that $(T_1, S_1)$ and $(T_2, S_2)$ are bi-freely independent and bi-free central limit distributions to show that all mixed $(\ell, r)$-cumulants of $(T_3, S_3)$ and $(T_4, S_4)$ of order at least three vanish.  In addition, since $(T_1, S_1)$ and $(T_2, S_2)$ are bi-freely independent and have equal second order cumulants, for all $\chi : \{1,2\} \to \{\ell, r\}$
\begin{align*}
\kappa_{1_\chi}(Z_{\chi(1), 3}, Z_{\chi(2), 4}) = -\cos(\theta)\sin(\theta) \kappa_{1_\chi}(Z_{\chi(1), 1}, Z_{\chi(2), 1}) +\cos(\theta)\sin(\theta) \kappa_{1_\chi}(Z_{\chi(1), 2}, Z_{\chi(2), 2}) =0
\end{align*}
and similarly $\kappa_{1_\chi}(Z_{\chi(1), 4}, Z_{\chi(2), 3}) = 0$. Hence Theorem \ref{thm:bifree-classifying-theorem} implies $(T_3, S_3)$ and $(T_4, S_4)$ are bi-freely independent.
\end{proof}

It is natural to ask whether Theorem \ref{thm:Kac-Loeve} holds when the left operators undergo one rotation and the right operators undergo a different rotation.  The above computations demonstrate the following which, in general, is the best one can hope for.
\begin{prop}
Let $(\A, \varphi)$ be a non-commutative probability space.  Suppose $(T_1, S_1)$ and $(T_2, S_2)$ are bi-free two-faced families in $\A$ such that $\varphi(T_k) = 0 = \varphi(S_k)$ for $k\in \{1,2\}$.  For fixed $\theta_\ell, \theta_r \in (0, \frac{\pi}{2})$, let
\begin{align*}
T_3 = \cos(\theta_\ell) T_1 + \sin(\theta_\ell) T_2, &\quad &
S_3 &= \cos(\theta_r) S_1 + \sin(\theta_r) S_2,\\
T_4 = -\sin(\theta_\ell) T_1 + \cos(\theta_\ell) T_2, &\qquad\text{and} &
S_4 &=-\sin(\theta_r) S_1 + \cos(\theta_r) S_2.
\end{align*}
If $(T_3, S_3)$ and $(T_4, S_4)$ are bi-freely independent, then $(T_1, S_1)$ and $(T_2, S_2)$ must be bi-free central limit distributions.
\end{prop}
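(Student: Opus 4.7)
The plan is to adapt the argument from Theorem \ref{thm:Kac-Loeve}, choosing the patterns of rotated variables so as to isolate the two rotation angles separately, since $\theta_\ell$ and $\theta_r$ need no longer coincide.  Retaining the notation $Z_{k,m}$ from that proof, fix $\chi : \{1, \ldots, n\} \to \{\ell, r\}$ with $n \geq 3$, write $a_\ell = |\chi^{-1}(\{\ell\})|$ and $a_r = |\chi^{-1}(\{r\})|$, and set $K_j = \kappa_{1_\chi}(Z_{\chi(1), j}, \ldots, Z_{\chi(n), j})$ for $j \in \{1, 2\}$.  By the multilinearity of the $(\ell, r)$-cumulants together with the bi-freeness of $(T_1, S_1)$ and $(T_2, S_2)$, which by Theorem \ref{thm:bifree-classifying-theorem} kills all mixed-index cumulants between the two pairs, any cumulant of the form $\kappa_{1_\chi}(Z_{\chi(1), m_1}, \ldots, Z_{\chi(n), m_n})$ with $m_i \in \{3,4\}$ expands as $\alpha K_1 + \beta K_2$, where $\alpha, \beta$ are monomials in $\cos(\theta_\ell), \sin(\theta_\ell), \cos(\theta_r), \sin(\theta_r)$ determined by $\chi$ and the $m_i$.

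By Theorem \ref{thm:bifree-classifying-theorem} the bi-freeness of $(T_3, S_3)$ and $(T_4, S_4)$ forces this cumulant to vanish whenever $(m_1, \ldots, m_n)$ takes both values $3$ and $4$.  The task is therefore to exhibit two such patterns producing a $2 \times 2$ linear system in $K_1, K_2$ of non-zero determinant.  If $\chi$ is constant, the patterns and computations from the proof of Theorem \ref{thm:Kac-Loeve} apply verbatim with the single relevant angle, yielding a determinant proportional to $\sin^n \cos^n$ of that angle.  If $\chi$ meets both spines, I would pick a left index $i_0$ and a right index $j_0$ and take the two patterns $(m_{i_0} = 4,\text{ rest } = 3)$ and $(m_{i_0} = m_{j_0} = 4,\text{ rest } = 3)$.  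A short calculation then shows that the resulting determinant factors, via $\cos^2(\theta_r) + \sin^2(\theta_r) = 1$, into an expression of the form $\pm \cos^{a_\ell}(\theta_\ell) \sin^{a_\ell}(\theta_\ell) \cos^{a_r - 1}(\theta_r) \sin^{a_r - 1}(\theta_r)$, which is non-zero on $(0, \pi/2)^2$ since $a_\ell, a_r \geq 1$.  Hence $K_1 = K_2 = 0$ in all cases, so all $(\ell, r)$-cumulants of $(T_1, S_1)$ and $(T_2, S_2)$ of order at least three vanish and the characterization of bi-free central limit distributions recalled at the start of Section \ref{sec:Kac-Loeve-Theorem} applies to both pairs.

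The main obstacle, relative to the proof of Theorem \ref{thm:Kac-Loeve}, is that with two different angles the naive choice of putting both flipped entries on the same spine produces a determinant depending on only one angle, which is enough for constant $\chi$ but inadequate for mixed $\chi$; placing one flip on each spine is the key trick that keeps the angle dependencies on the two sides multiplicatively separated and lets the $\cos^2 + \sin^2 = 1$ identity on the lightly flipped side produce a non-zero determinant.  As a side observation explaining why the conclusion is genuinely weaker than Theorem \ref{thm:Kac-Loeve}, the analogous computation at $n = 2$ yields a determinant with factor $\sin(\theta_r - \theta_\ell)\sin(\theta_r + \theta_\ell)$, which vanishes precisely when $\theta_\ell = \theta_r$, so no constraint of the form $K_1 = K_2$ on the second-order cumulants can be extracted in general.
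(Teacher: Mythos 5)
Your proof is correct and follows essentially the same route the paper intends: the Proposition is stated with the remark that ``the above computations demonstrate'' it, and your argument is precisely the adaptation of the proof of Theorem \ref{thm:Kac-Loeve} with the two flip patterns chosen so the resulting $2\times 2$ system has non-vanishing determinant, the $\cos^2+\sin^2=1$ cancellation now occurring in whichever angle carries the extra flip. One small quibble with your commentary (not the proof): placing both flips on the same spine is not actually ``inadequate'' for mixed $\chi$ when that spine has at least two nodes --- the determinant is then $\pm\cos^{a_\ell-1}(\theta_\ell)\sin^{a_\ell-1}(\theta_\ell)\cos^{a_r}(\theta_r)\sin^{a_r}(\theta_r)$ (or with the roles of $\ell$ and $r$ exchanged), which is still non-zero on $(0,\tfrac{\pi}{2})^2$; your one-flip-per-spine choice is simply the variant that works uniformly whenever both spines are non-empty.
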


\section{Boolean Independence in Bi-Free Probability}
\label{sec:Boolean}

This section demonstrates how operator-valued Boolean independence arises and can be studied in the bi-free setting.  It is advised for the reader to keep the scalar case $B = \bC$ in mind as things simplify slightly.

\subsection{Boolean Independent Algebras from Bi-Free Pairs of Faces}

We begin by recalling the definitions for operator-valued Boolean independent algebras.
\begin{defn}
Let $\A$ be a unital algebra containing $B$ (with $1_\A= 1_B$).  A (possibly non-unital) subalgebra $A \subseteq \A$ is said to be a \emph{$B$-algebra} if $BAB \subseteq A$.
\end{defn}

\begin{defn}
\label{defn:Boolean}
Let $(\A, \Phi)$ be a $B$-non-commutative probability space and let $A_1, \ldots, A_n$ be $B$-algebras contained in $\A$.  We say that $A_1, \ldots, A_n$ are \emph{Boolean independent with amalgamation over $B$} (or simply \emph{Boolean independent over $B$}) if 
\[
\Phi(Z_1 \cdots Z_n) = \Phi(Z_1) \cdots \Phi(Z_n)
\]
whenever $Z_m \in A_{k_m}$ are such that $k_m \neq k_{m+1}$ for all $m \in \{1,\ldots, n-1\}$.
\end{defn}

The following demonstrates a method for producing equations like those in Definition \ref{defn:Boolean} by taking bi-free pairs of $B$-faces and operators that are alternating products of left and right operators from the same $B$-face.  We make the choice of `left before right' as one needs to use $B^{\mathrm{op}}$ for the `right before left' option (note this second option works in the case $B = \bC$).
\begin{lem}
\label{lem:bi-free-that-looks-like-Boolean}
Let $\{(C_k, D_k)\}_{k \in K}$ be bi-free pairs of $B$-faces in a $B$-$B$-non-commutative probability space $(\A, E_\A, \varepsilon)$.  Let $k_1, \ldots, k_n \in K$ be such that $k_m \neq k_{m+1}$, let $\{q_m\}^n_{m=1}\subseteq \bN$, and let 
\[
\{T_{m, t}\}^{q_m}_{t=1} \subseteq C_{k_m} \qqand \{S_{m, t}\}^{q_m}_{t=1} \subseteq D_{k_m} \cap \A_\ell
\]
be such that
\[
E_\A(   T_{m, 1} T_{m,2} \cdots T_{m,q_m}) = 0 = E_\A(S_{m, 1} S_{m,2} \cdots S_{m,q_m}).
\]
If
\[
Z_m = T_{m, 1} S_{m, 1} \cdots T_{m, q_m} S_{m, q_m},
\]
then
\[
E_\A(Z_1 \cdots Z_n) = E_\A(Z_1) \cdots E_\A(Z_n).
\]
\end{lem}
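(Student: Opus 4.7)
The plan is to apply the bi-free moment-cumulant formula from Theorem \ref{thm:bifree-classifying-theorem} and then determine which bi-non-crossing partitions contribute non-trivially. Let $W_1, \ldots, W_N$ denote the operators of the product $Z_1 \cdots Z_n$ listed in order, where $N = 2\sum_m q_m$. The induced type map $\chi$ alternates $\ell, r, \ell, r, \ldots$ throughout, and the color map $\epsilon$ assigns $k_m$ to the positions of $Z_m$. Since $\{(C_k, D_k)\}_{k\in K}$ is bi-free, mixed $(\ell,r)$-cumulants vanish, which gives
\[
E_\A(Z_1 \cdots Z_n) = \sum_{\tau \in BNC(\chi),\ \tau \leq \epsilon} \kappa_\tau(W_1, \ldots, W_N).
\]

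I would then split this sum into two classes: (a) partitions $\tau$ whose blocks each lie entirely within a single $Z_m$, and (b) partitions having at least one cross-block containing positions from distinct $Z_m$'s of the same color (possible only when some $k_m = k_{m'}$ for $|m-m'| \geq 2$). For class (a), I would observe that in the $\prec_\chi$-ordering the positions of each $Z_m$ occupy two $\chi$-intervals with those of $Z_{m+1}$ nested inside those of $Z_m$; hence any combination of bi-non-crossing partitions of each $Z_m$ glues into a globally bi-non-crossing $\tau$. Bi-multiplicativity then factors $\kappa_\tau$ through its blocks, and the moment-cumulant relation applied to each $Z_m$ individually produces exactly $\prod_m E_\A(Z_m)$.

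The main obstacle is showing that class (b) contributes zero, which is where the centering hypotheses enter. The idea is that a cross-block $V$ joining positions in $Z_m$ and $Z_{m'}$ (with $k_m = k_{m'}$) must be respected by $\tau$ on every intermediate $Z_j$ with $m < j < m'$: the alternating structure of $\chi$, together with the bi-non-crossing constraint forced by $V$, restricts the blocks of $\tau|_{Z_j}$ to configurations that separate the $T_{j,t}$'s from the $S_{j,t}$'s in a controlled way. Summing $\kappa_\tau$ over the allowed configurations inside $Z_j$, bi-multiplicativity lets the $T$-sub-partition and $S$-sub-partition be summed independently, yielding a factor proportional to $E_\A(T_{j,1}\cdots T_{j,q_j}) \cdot E_\A(S_{j,1} \cdots S_{j,q_j})$ by the moment-cumulant formula applied to the all-$\ell$ string of $T$'s and to the all-$\ell$ string of $S$'s (the latter making use of $S_{m,t}\in\A_\ell$ so products of $S$'s have a well-defined expectation via left operators). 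The centering hypothesis makes both factors vanish, killing the entire class (b) contribution.

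The hardest step will be verifying in full generality the combinatorial claim that cross-blocks force exactly the $T$-grouped and $S$-grouped configurations inside each intermediate $Z_j$, and that summing over these configurations factors into the product of the two centering moments. In the operator-valued setting, the hypothesis $S_{m,t} \in D_{k_m} \cap \A_\ell$ is further needed so that the $B$-valued outputs produced by the bi-multiplicative reductions on the inner $Z_j$ blocks can be absorbed correctly into the outer computations without obstruction from the $\varepsilon$-bimodule structure.
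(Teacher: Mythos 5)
Your route is genuinely different from the paper's: the paper never invokes the moment--cumulant formula for this lemma, but instead computes $Z_1'\cdots Z_n'(1_B)$ directly in the reduced free product $\ast_{k}\X_k$ of representing bimodules, establishing by induction an explicit decomposition of this vector as a sum of elementary tensors, from which the factorization of $E_\A$ is read off immediately. Your combinatorial strategy is viable in principle (it is close in spirit to the paper's later arguments for Lemma \ref{lem:certain-moment-terms-vanish-in-Boolean-systems} and Theorem \ref{thm:my-left-right-separated-R-Formula}), but the decisive step is not established, and the mechanism you describe for it is not the correct one.

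Concretely, the claim that a cross-block forces each intermediate $Z_j$ into configurations whose sum factors as $E_\A(T_{j,1}\cdots T_{j,q_j})\cdot E_\A(S_{j,1}\cdots S_{j,q_j})$ does not hold. First, a cross-block $V$ joining, say, left positions of $Z_m$ and $Z_{m'}$ constrains only the left positions of the intermediate $Z_j$; the right positions of those $Z_j$ lie outside the $\prec_\chi$-convex hull of $V$ and may join with right positions of $Z_{j'}$ for $j'\leq m$ or $j'\geq m'$ of the same colour, so no factor $E_\A(S_{j,1}\cdots S_{j,q_j})$ appears. Second, the gaps between consecutive elements of $V$ need not consist of the left positions of a single $Z_j$: since $\epsilon$ only forbids joining \emph{adjacent} $Z$'s, a gap can contain the full left strings of several intermediate $Z_j$'s of various colours together with \emph{partial} strings $T_{m,a+1}\cdots T_{m,q_m}$ of $Z_m$ and $Z_{m'}$ themselves, and these partial products are not centered. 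The sum of $\kappa_\tau$ over configurations inside such a gap equals $E_\A$ of the ordered product of all operators in the gap, and showing this vanishes requires the freeness over $B$ of the left faces $\{C_k\}$ combined with the centering of the full strings $T_{j,1}\cdots T_{j,q_j}$ --- an argument you do not supply and which you yourself flag as the unresolved ``hardest step.'' Until that is carried out (together with the class-(a) bookkeeping in the operator-valued setting, where one must check that the nested $L_b$/$R_b$ insertions produce $E_\A(Z_1)\cdots E_\A(Z_n)$ in the correct order, using $S_{m,t}\in\A_\ell$ to convert $E_\A(XL_b)$ into $E_\A(X)b$), the proof is incomplete. The paper's bimodule computation sidesteps all of this and is considerably shorter.
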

\begin{proof}
Since $\{(C_k, D_k)\}_{k \in K}$ are bi-free pairs of $B$-faces, there exists $B$-$B$-bimodules with specified $B$-vector states $(\X_k, \mathring{\X}_k, p_k)$ and unital homomorphisms
\[
\alpha_k : C_k \to \L_\ell(\X_k) \qqand \beta_k : D_k \to \L_r(\X_k)
\]
such that if
\[
\lambda_k : \L_\ell(\X_k) \to  \L_\ell(\ast_{m \in K} (\X_m, \mathring{\X}_m, p_m)) \AND \rho_k : \L_r(\X_k) \to  \L_r(\ast_{m \in K} (\X_m, \mathring{\X}_m, p_m))
\]
are the left and right regular representations respectively, $E$ is the expectation of $\L(\ast_{m \in K} (\X_m, \mathring{\X}_m, p_m))$ onto $B$, 
\begin{align*}
T'_{m,p} = \lambda_{k_m}(&\alpha_{k_m}(T_{m,p})), \quad S'_{m,p} = \rho_{k_m}(\beta_{k_m}(S_{m,p})), \quad \text{and} \quad Z'_m = T'_{m, 1} S'_{m, 1} \cdots T'_{m, q_m} S'_{m, q_m},
\end{align*}
then
\begin{align*}
&E(   T'_{m, 1} T'_{m,2} \cdots T'_{m,q_m}) = 0 \text{ for all }m,\\
&E(S'_{m, 1} S'_{m,2} \cdots S'_{m,q_m}) =0\text{ for all }m,\\
&E(Z'_m) = E_\A(Z_m) \text{ for all $m$, and}\\
&E(Z'_1 \cdots Z'_n) = E_\A(Z_1 \cdots Z_n).
\end{align*}

Let
\begin{align*}
\eta_0 &= 1_B \in \ast_{m \in K} (\X_m, \mathring{\X}_m, p_m), \\
\eta_m &= Z'_m \eta_0 - E(Z'_m)\eta_0 \in \mathring{\X}_{k_m}, \\
\zeta_m &= T'_{m, 1} T'_{m,2} \cdots T'_{m,q_m} (1_B) \in \mathring{\X}_{k_m}, \text{ and}\\
\omega_m &= S'_{m, 1} S'_{m,2} \cdots S'_{m,q_m} (1_B) \in \mathring{\X}_{k_m}.
\end{align*}
We claim that if
\[
\prod^n_{m=a+1} E(Z'_m) := E(Z'_{a+1}) E(Z'_{a+2}) \cdots E(Z'_{n})
\]
then
\begin{align*}
Z'_1 & \cdots Z'_n (\eta_0) = \sum^n_{a=0}    \zeta_1 \otimes \zeta_2 \otimes \cdots \otimes \zeta_{a-1} \otimes R_{\left[\prod^n_{m=a+1} E(Z'_m) \right] }(\eta_a) \otimes \omega_{a-1} \otimes \cdots \otimes \omega_1
\end{align*}
from which the lemma clearly follows.  It is important to note
\[
R_{\left[\prod^n_{m=a+1} E(Z'_m) \right] }(\eta_a) \in \mathring{X}_{k_a}.
\]

To see the claim, we proceed by induction on $n$.  The case $n=1$ is trivial as
\[
Z'_1(\eta_0) = E(Z'_1) \oplus \eta_1 = R_{E(Z'_1)} (1_B) \oplus \eta_1.
\]

To proceed inductively, suppose the result holds for $n-1$.  In particular, by relabelling, we may assume that
\begin{align*}
Z'_2 \cdots Z'_n  (\eta_0) = R_{\left[\prod^n_{m=2} E(Z'_m) \right]} (\eta_0) 
+ \sum^n_{a=2}     \zeta_2 \otimes \zeta_3 \otimes \cdots \otimes \zeta_{a-1} \otimes R_{\left[\prod^n_{m=a+1} E(Z'_m) \right]}(\eta_a) \otimes \omega_{a-1}  \otimes \cdots \otimes \omega_2.
\end{align*}
by the induction hypothesis.  For $a \geq 2$, notice
\begin{align*}
&Z'_1 \left(\zeta_2 \otimes \zeta_3 \otimes \cdots \otimes \zeta_{a-1} \otimes R_{\left[\prod^n_{m=a+1} E(Z'_m) \right]}(\eta_a) \otimes \omega_{a-1} \otimes \cdots \otimes \omega_2\right)\\ 
&=  \zeta_1 \otimes \zeta_2 \otimes \zeta_3 \otimes \cdots \otimes \zeta_{a-1} \otimes R_{\left[\prod^n_{m=a+1} E(Z'_m) \right]}(\eta_a) \otimes \omega_{a-1} \otimes \cdots \otimes \omega_2 \otimes \omega_1.
\end{align*}
In addition, since $\{S_{m, t}\}^{q_m}_{t=1} \subseteq D_{k_m} \cap \A_\ell$, one obtains $Z'_1 \in \A_\ell$ so
\begin{align*}
Z'_1  R_{\left[\prod^n_{m=2} E(Z'_m) \right]} (\eta_0) &=  R_{\left[\prod^n_{m=2} E(Z'_m) \right]} Z'_1(\eta_0)\\
&=  R_{\left[\prod^n_{m=2} E(Z'_m) \right]}  \left(E(Z'_1) \oplus \eta_1 \right) \\
&= \left(R_{\left[\prod^n_{m=2} E(Z'_m) \right]} R_{E(Z'_1)} (1_B)   \right) \oplus R_{\left[\prod^n_{m=2} E(Z'_m) \right]} (\eta_1)\\
&= \left(R_{\left[\prod^n_{m=1} E(Z'_m) \right]} (\eta_0)   \right) \oplus R_{\left[\prod^n_{m=2} E(Z'_m) \right]} (\eta_1).
\end{align*}
Hence the claim and lemma follow.
\end{proof}

Lemma \ref{lem:bi-free-that-looks-like-Boolean} easily enables the construction of $B$-algebras which are Boolean independent over $B$ from bi-free pairs of $B$-faces.  To begin the construction, recall from Remark \ref{rem:B-ncps-from-B-B-ncps} that if $(\A, E, \varepsilon)$ is a $B$-$B$-non-commutative probability space, then $(\A_\ell, E)$ is a $B$-non-commutative probability space where $\varepsilon(B \otimes 1_B)$ is the copy of $B$.  If $(C, D)$ is a pair of $B$-faces, $C' \subseteq C$, $D' \subseteq D \cap \A_\ell$, and
\[
C' D' := \{TS \, \mid \, T \in C', S \in D'\}
\]
then $\alg(C'D') \subseteq \A_\ell$ (where $\alg(X)$ represents the (not necessarily unital) algebra generated by $X$).  If $L_b C' \subseteq C'$ and $C' L_b \subseteq C'$ for all $b \in B$, then it is clear that $\alg(C'D')$ is a $B$-algebra contained in $\A_\ell$.  Using Lemma \ref{lem:bi-free-that-looks-like-Boolean}, we immediately obtain the following.

\begin{thm}
\label{thm:algebras-are-Boolean-independent}
Let $\{(C_k, D_k)\}_{k \in K}$ be bi-free pairs of $B$-faces in a $B$-$B$-non-commutative probability space $(\A, E, \varepsilon)$.  For each $k \in K$, let 
\[
C'_k \subseteq C_k \qqand D'_k \subseteq D_k \cap \A_\ell
\]
be subsets such that $L_b C'_k, C'_k L_b \subseteq C'_k$ for all $b \in B$ and
\[
E((C'_k)^n) = \{0\} = E((D'_k)^n)
\]
for all $n \geq 1$.  Then $\{\alg(C'_kD'_k)\}_{k \in K}$ are $B$-algebras in the $B$-non-commutative probability space $(\A_\ell, E)$ that are Boolean independent over $B$.
\end{thm}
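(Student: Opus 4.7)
The plan is to reduce the theorem directly to Lemma \ref{lem:bi-free-that-looks-like-Boolean} via two bookkeeping steps: verifying the $B$-algebra structure, then exploiting multilinearity of $E$ to reduce arbitrary products to the alternating form already handled.

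First I would verify that each $\alg(C'_k D'_k)$ is a $B$-algebra contained in $\A_\ell$. Containment in $\A_\ell$ is immediate since $C'_k \subseteq C_k \subseteq \A_\ell$ and $D'_k \subseteq \A_\ell$, and $\A_\ell$ is closed under products. For the $B$-bimodule property, it suffices to check generators $TS$ with $T \in C'_k$ and $S \in D'_k$: the hypothesis $L_b C'_k \subseteq C'_k$ yields $L_b(TS) = (L_b T) S \in C'_k D'_k$, while the fact that $D'_k \subseteq D_k \subseteq \A_r$ means $S$ commutes with $L_b$, so $(TS)L_b = T(SL_b) = T L_b S = (TL_b)S \in C'_k D'_k$ using $C'_k L_b \subseteq C'_k$. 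Since $E$ restricted to $\A_\ell$ gives $(\A_\ell, E)$ the structure of a $B$-non-commutative probability space (as recorded in Remark \ref{rem:B-ncps-from-B-B-ncps}), this puts us in the setting of Definition \ref{defn:Boolean}.

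Next I would verify the Boolean independence relation. Fix indices $k_1, \ldots, k_n \in K$ with $k_m \neq k_{m+1}$ and elements $W_m \in \alg(C'_{k_m} D'_{k_m})$; the goal is
\[
E(W_1 \cdots W_n) = E(W_1) \cdots E(W_n).
\]
Since each $\alg(C'_{k_m} D'_{k_m})$ is spanned (as a vector space) by products of generators from $C'_{k_m} D'_{k_m}$, which are precisely alternating words
\[
W_m = T_{m,1} S_{m,1} T_{m,2} S_{m,2} \cdots T_{m,q_m} S_{m,q_m}
\]
with $T_{m,t} \in C'_{k_m}$ and $S_{m,t} \in D'_{k_m}$, multilinearity of $E$ in each slot reduces both sides of the desired identity to the case where each $W_m$ has this explicit alternating form.

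For such a $W_m$, the hypothesis $E((C'_{k_m})^{q_m}) = \{0\}$ gives $E(T_{m,1} T_{m,2} \cdots T_{m,q_m}) = 0$, and the analogous statement holds for the $S_{m,t}$. Thus the $T_{m,t}$'s and $S_{m,t}$'s satisfy precisely the centering hypotheses of Lemma \ref{lem:bi-free-that-looks-like-Boolean}, and applying that lemma yields $E(W_1 \cdots W_n) = E(W_1) \cdots E(W_n)$ as required. The only step requiring any real work is the $B$-bimodule verification, which relies on the key observation that $D'_k \subseteq \A_r$ makes $S$ commute with $L_b$; otherwise the argument is purely a reduction, since all the genuine analytic content sits inside Lemma \ref{lem:bi-free-that-looks-like-Boolean}.
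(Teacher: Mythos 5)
Your proposal is correct and follows essentially the same route as the paper: the paper likewise notes that the hypotheses $L_b C'_k, C'_k L_b \subseteq C'_k$ together with $D'_k \subseteq D_k \cap \A_\ell$ (so the $S$'s commute with each $L_b$) make $\alg(C'_kD'_k)$ a $B$-algebra in $(\A_\ell, E)$, and then deduces the Boolean factorization immediately from Lemma \ref{lem:bi-free-that-looks-like-Boolean}. Your explicit multilinearity reduction from general elements of $\alg(C'_kD'_k)$ to alternating words is exactly the step the paper leaves implicit, and it is carried out correctly.
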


One may be concern that the sets $D_k \cap \A_\ell$ might just be scalars.  Clearly this is not the case when $B = \bC$ and we show an instance in the operator-valued setting where the intersection is non-empty in Construction \ref{cons:Boolean}.  In particular, consider the following example.

\begin{exam}
Let $\mathbb{F}_n$ denote the free group on $n$ generators $u_1, \ldots, u_n$ and let $\varphi$ be the vector state on $\B(\ell_2(\mathbb{F}_n))$ corresponding to the point mass at the identity.  If $\lambda, \rho : \mathbb{F}_n \to \B(\ell_2(\mathbb{F}_n))$ denote the left and right regular representations respectively, recall $\{(\lambda(u_k), \rho(u_k))\}_{k =1}^n$ are bi-free two-faced families with respect to $\varphi$.  Hence Theorem \ref{thm:algebras-are-Boolean-independent} implies that 
\[
\left\{ \alg \left( \left\{ \lambda(u_k^p) \rho(u_k^q) \, \mid \, p,q \in \mathbb{N} \right\} \right)  \right\}_{k=1}^n
\]
are Boolean independent with respect to $\varphi$ and
\[
\left\{ \alg \left( \left\{ \lambda(u_k^p) \rho(u_k^{-q}) \, \mid \, p,q \in \mathbb{N} \right\} \right)  \right\}_{k=1}^n
\]
are Boolean independent with respect to $\varphi$.
\end{exam}

\begin{rem}
\label{rem:simple-counter-example}
The converse of Theorem \ref{thm:algebras-are-Boolean-independent} does not hold even in the scalar setting: if $\{(C'_k, D'_k)\}_{k \in K}$ are pairs of faces in a non-commutative probability space $(\A, \varphi)$ with 
\[
\varphi((C'_k)^n) = \{0\} = \varphi((D'_k)^n)
\]
for all $n\geq 1$ and $k \in K$, then the Boolean independence of $\{\alg(C'_kD'_k)\}_{k \in K}$ and the Boolean independence of $\{\alg(D'_kC'_k)\}_{k \in K}$ is not enough to guarantee that 
\[
\{(\bC 1_\A + \alg(C'_k), \bC 1_\A +\alg(D'_k))\}_{k \in K}
\]
are bi-free.

For a concrete example where this converse fails, consider $(M_2(\bC), \tau)$ where $\tau$ is the normalized trace on $M_2(\bC)$.  Let
\[
T = \left[  \begin{array}{cc} 0 & 1 \\ 0 & 0 \end{array} \right], \qquad S = \left[  \begin{array}{cc} 0 & 0 \\ 1 & 0 \end{array} \right],
\]
$C'_1 = D'_1 = \mathbb{C} T$, and $C'_2 = D'_2 = \mathbb{C}S$.  It is clear that $\{(C'_k, D'_k)\}_{k =1, 2}$ are pairs of algebras that have the specified properties.  However, for 
\[
\{(\bC 1_\A + \alg(C'_k), \bC 1_\A +\alg(D'_k))\}_{k =1,2}
\]
to be bi-free, one would require for any $\chi : \{1,2\} \to \{\ell, r\}$ that
\[
0 = \kappa_{1_\chi}(T, S) = \tau(TS) - \tau(T)\tau(S).
\]
It is clear that $\tau(TS) = \frac{1}{2}$ whereas $\tau(T) = \tau(S) = 0$ demonstrating the above line does not hold.
\end{rem}

\begin{rem}
\label{rem:operator-model-counterexample}
The main issue with Remark \ref{rem:simple-counter-example} is that $C'_1$ and $D'_2$ are not classically independent with respect to $\varphi$.  However, even the additional conditions that $\{\bC 1_\A + C'_k\}_{k\in K}$ are freely independent, that $\{\bC 1_\A + D'_k\}_{k \in K}$ are freely independent, and that $\bC 1_\A + C'_{k_1}$ and $\bC 1_\A + D'_{k_2}$ are classically independent for all $k_1, k_2 \in K$ is not enough to guarantee that 
\[
\{(\bC 1_\A + \alg(C'_k), \bC 1_\A +\alg(D'_k))\}_{k \in K}
\]
are bi-free.

To see the above claim, note by the operator model in \cite{CNS2014-1} there exists a non-commutative probability space $(\A, \varphi)$ and operators $T_1, T_2, S_1, S_2 \in \A$ such that $T_1, T_2$ are left operators, $S_1, S_2$ are right operators, and all $(\ell, r)$-cumulants involving these operators are zero except $\kappa_{\chi}(T_1, T_2, S_2) =1$.  If $C'_k = \alg(\{T_k\})$ and $D'_k = \alg(\{S_k\})$, then 
\[
\{(\bC 1_\A + \alg(C'_k), \bC 1_\A +\alg(D'_k))\}_{k \in \{1,2\}}
\]
is not a bi-freely independent family with respect to $\varphi$ by Theorem \ref{thm:bifree-classifying-theorem}.  However, clearly equations (\ref{eq:free-via-lr-cumulants}) and (\ref{eq:independent-via-LR-cumulants}) of Section \ref{sec:ClassicalAndFreeIndependence} imply
\[
\varphi((C'_k)^n) = \{0\} = \varphi((D'_k)^n)
\]
for all $n\geq 1$ and $k \in K$,  $\bC 1_\A + C'_1$ and $\bC 1_\A + C'_2$ are freely independent, $\bC 1_\A + D'_1$ and $\bC 1_\A + D'_2$ are freely independent, and $\bC 1_\A + C'_{k_1}$ and $\bC 1_\A + D'_{k_2}$ are classically independent for all $k_1, k_2 \in \{1,2\}$.  To see that $\{\alg(C'_kD'_k)\}_{k \in \{1,2\}}$ are Boolean independent, one must show that
\begin{align*}
\varphi  \left( \left(\prod^{m_1}_{m=1} T^{p_{1,m}}_{k_1} S^{q_{1,m}}_{k_1}\right) \cdots \left(\prod^{m_n}_{m=1} T^{p_{n,m}}_{k_n} S^{q_{n,m}}_{k_n}\right)  \right)  
= \varphi\left(\prod^{m_1}_{m=1} T^{p_{1,m}}_{k_1} S^{q_{1,m}}_{k_1}\right) \cdots \varphi\left(\prod^{m_n}_{m=1} T^{p_{n,m}}_{k_n} S^{q_{n,m}}_{k_n}\right)
\end{align*}
for all $n\geq 2$, $m_1, \ldots m_n \in \mathbb{N}$, $p_{k,m}, q_{k,m} \in \mathbb{N}$, and $k_1,\ldots, k_n \in \{1,2\}$ with $k_m \neq k_{m+1}$.  Notice 
\[
\varphi\left(\prod^{t}_{m=1} T^{p_{m}}_{k} S^{q_{m}}_{k}\right) = 0
\]
since all $(\ell, r)$-cumulants involving $T_k$ and $S_k$ are zero. In addition
\[
\varphi \left( \left(\prod^{m_1}_{m=1} T^{p_{1,m}}_{k_1} S^{q_{1,m}}_{k_1}\right) \cdots \left(\prod^{m_n}_{m=1} T^{p_{n,m}}_{k_n} S^{q_{n,m}}_{k_n}\right)  \right) = 0
\]
since the above expression is a sum of products of $(\ell, r)$-cumulants where each product of $(\ell, r)$-cumulants must contain at least one involving $S_1$ and thus is zero.  Similarly $\{\alg(D'_kC'_k)\}_{k \in \{1,2\}}$ are Boolean independent thus completing the claim.
\end{rem}

\subsection{Bi-Free Boolean Systems}

To examine Boolean independence over $B$ inside bi-free probability, we restrict ourselves to the following abstract structure. 
\begin{defn}
\label{defn:bi-free-Boolean-system}
Let $\{(C_k, D_k)\}_{k \in K}$ be bi-free pairs of $B$-faces in a $B$-$B$-non-commutative probability space $(\A, E, \varepsilon)$.  For each $k \in K$, let 
\[
C'_k \subseteq C_k \qqand D'_k \subseteq D_k\cap \A_\ell
\]
be subsets such that $L_b C'_k, C'_k L_b \subseteq C'_k$ for all $b \in B$.  We say that $\{(C'_k, D'_k)\}_{k \in K}$ is a \emph{bi-free Boolean $B$-system with respect to $E$} if
\begin{enumerate}
\item \label{cond:nilpotent-Boolean} $(C'_k)^2 = \{0\} = (D'_k)^2$ for all $k \in K$,
\item \label{cond:left-followedby-alternating-zero-moments}$E(C'_k (D'_kC'_k)^n) = \{0\}$ for all $n \geq 0$ and $k \in K$, and
\item \label{cond:right-followedby-alternating-zero-moments}$E(D'_k (C'_kD'_k)^n) = \{0\}$ for all $n \geq 0$ and $k \in K$.
\end{enumerate}
\end{defn}

Note Theorem \ref{thm:algebras-are-Boolean-independent} directly implies the following.
\begin{cor}
\label{cor:boolean-systems-give-boolean-independent-algebras}
Let $\{(C'_k, D'_k)\}_{k \in K}$ be a bi-free Boolean $B$-system with respect to $E$.  Then $\{\alg(C'_kD'_k)\}_{k \in K}$ are Boolean independent over $B$ with respect to $E$.
\end{cor}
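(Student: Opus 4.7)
The statement is presented as a direct corollary of Theorem \ref{thm:algebras-are-Boolean-independent}, so my plan is simply to verify that the hypotheses of a bi-free Boolean $B$-system imply all the hypotheses of that theorem. The subset conditions $L_b C'_k, C'_k L_b \subseteq C'_k$ and $C'_k \subseteq C_k$, $D'_k \subseteq D_k \cap \A_\ell$ are built directly into Definition \ref{defn:bi-free-Boolean-system}, so the only nontrivial verification is the vanishing moment condition
\[
E((C'_k)^n) = \{0\} = E((D'_k)^n) \qforal n \geq 1.
\]

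To obtain this, I would split into cases. For $n \geq 2$, condition (\ref{cond:nilpotent-Boolean}) of Definition \ref{defn:bi-free-Boolean-system} gives $(C'_k)^n \subseteq (C'_k)^2 = \{0\}$ and $(D'_k)^n \subseteq (D'_k)^2 = \{0\}$, so their expectations are trivially $\{0\}$. For $n = 1$, I would specialize conditions (\ref{cond:left-followedby-alternating-zero-moments}) and (\ref{cond:right-followedby-alternating-zero-moments}) to the case $n = 0$, which read $E(C'_k) = \{0\}$ and $E(D'_k) = \{0\}$ respectively. Once all hypotheses of Theorem \ref{thm:algebras-are-Boolean-independent} are in place, that theorem delivers the Boolean independence of $\{\alg(C'_k D'_k)\}_{k \in K}$ over $B$ in the $B$-non-commutative probability space $(\A_\ell, E)$, which is exactly the conclusion required.

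Since the deduction amounts to reading off the right special cases of the defining axioms, there is no genuine obstacle here: conditions (\ref{cond:left-followedby-alternating-zero-moments}) and (\ref{cond:right-followedby-alternating-zero-moments}) are strictly stronger than what Theorem \ref{thm:algebras-are-Boolean-independent} needs, and condition (\ref{cond:nilpotent-Boolean}) is precisely what handles the higher powers. The extra strength of conditions (\ref{cond:left-followedby-alternating-zero-moments}) and (\ref{cond:right-followedby-alternating-zero-moments}) is presumably reserved for later structural results on bi-free Boolean $B$-systems rather than for this immediate corollary.
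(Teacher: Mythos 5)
Your proposal is correct and matches the paper's argument, which simply observes that Theorem \ref{thm:algebras-are-Boolean-independent} directly implies the corollary; your verification that condition (\ref{cond:nilpotent-Boolean}) kills all powers $n\geq 2$ while the $n=0$ cases of conditions (\ref{cond:left-followedby-alternating-zero-moments}) and (\ref{cond:right-followedby-alternating-zero-moments}) give $E(C'_k)=\{0\}=E(D'_k)$ is exactly the intended (unwritten) check. Your closing remark is also accurate: the full strength of those conditions is used later, in Lemma \ref{lem:certain-moment-terms-vanish-in-Boolean-systems}.
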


The reason for Definition \ref{defn:bi-free-Boolean-system} is the following construction showing any collection of $B$-algebras which are Boolean independent over $B$ may be realized inside a bi-free Boolean $B$-system.
\begin{cons}
\label{cons:Boolean}
Let $(\mathcal{A}, \Phi)$ be a $B$-non-commutative probability space.  By Remark \ref{rem:B-ncps-from-B-B-ncps} we may view $\A$ as a $B$-$B$-module with specified $B$-vector state which we denote by $(\A, \mathring{\A}, \Phi)$ where $\mathring{\A} = \ker(\Phi)$.  Consider $\A \oplus \A$ as a $B$-$B$-bimodule via the action
\[
b_1 \cdot (Z_1 \oplus Z_2) \cdot b_2 = (b_1 Z_1 b_2) \oplus (b_1 Z_2 b_2)
\]
for all $b_1, b_2 \in B$ and $Z_1, Z_2 \in \A$.  Notice $\A \oplus \A$ then becomes a $B$-$B$-module with specified $B$-vector state via the triple $(\A \oplus \A, \mathring{\A} \oplus \A, \Psi)$ where
\[
\Psi(Z_1 \oplus Z_2) = \Phi(Z_1).
\]

We need to consider some special operators in $\L(\A \oplus \A)$.  For $Z \in \A$ define $T_Z \in \L(\A \oplus \A)$ by
\[
T_Z(Z_1 \oplus Z_2) = ZZ_2 \oplus 0.
\]
Then $T_Z \in \L_\ell(\A \oplus \A)$ since
\begin{align*}
T_Z R_b (Z_1 \oplus Z_2) = T_Z (Z_1b \oplus Z_2b) = ZZ_2b \oplus 0 = R_b (ZZ_2 \oplus 0) = R_b T_Z (Z_1 \oplus Z_2)
\end{align*}
for all $b \in B$ and $Z_1, Z_2 \in \A$.  In addition, notice
\[
T_Z L_b (Z_1 \oplus Z_2) = T_Z( bZ_1 \oplus bZ_2) = ZbZ_2 \oplus 0 = T_{Zb}(Z_1 \oplus Z_2)
\]
and
\[
L_b T_Z(Z_1 \oplus Z_2) = L_b(ZZ_2 \oplus 0) = b Z Z_2 \oplus 0 = T_{bZ} (Z_1 \oplus Z_2)
\]
for all $b \in B$ and $Z_1, Z_2 \in \A$.  Hence $L_b T_Z = T_{bZ}$ and $T_Z L_b = T_{Zb}$.  Furthermore, clearly 
\[
T_{z_1 Z_1 + z_2 Z_2} = z_1 T_{Z_1} + z_2 T_{z_2}
\]
for all $z_1, z_2 \in \bC$ and $Z_1, Z_2 \in \A$.

In addition, define $S_{1_B} \in \L(\A \oplus \A)$ by
\[
S_{1_B}(Z_1 \oplus Z_2) = 0 \oplus Z_1.
\]
Then $S_{1_B} \in \L_\ell(\A \oplus \A) \cap \L_r(\A \oplus \A)$ since
\begin{align*}
S_{1_B} L_{b_1} R_{b_2} (Z_1 \oplus Z_2) &= S_{1_B} (b_1Z_1b_2 \oplus b_1Z_2b_2)\\
 &= 0 \oplus b_1 Z_1 b_2 \\
&=L_{b_1} R_{b_2} (0 \oplus Z_1) \\
&= L_{b_1} R_{b_2} S_{1_B} (Z_1 \oplus Z_2)
\end{align*}
for all $b_1, b_2 \in B$ and $Z_1, Z_2 \in \A$.

Let $\{A_k\}_{k \in K}$ be $B$-algebras contained in $\A$ that are Boolean independent over $B$ with respect to $\Phi$.   For each $k \in K$, we can consider a copy of $(\A \oplus \A, \mathring{\A} \oplus \A, \Psi)$, denoted $(\Y_k, \mathring{\Y}_k, \Psi_k)$.  Let $E$ denote the expectation of $\L(\ast_{m \in K}(\Y_m, \mathring{\Y}_m, \Psi_k))$ onto $B$ and let
\begin{align*}
\lambda_k &: \L_\ell(\Y_k) \to \L_\ell(\ast_{m \in K} (\Y_m, \mathring{\Y}_m, \Psi_m)) \quad \text{and} \\
\rho_k &: \L_r(\Y_k) \to \L_r(\ast_{m \in K} (\Y_m, \mathring{\Y}_m, \Psi_m))
\end{align*}
be the left and right regular representations onto the $k^{\text{th}}$ term respectively.  By definition the pairs of $B$-faces $\{(\lambda_k(\L_\ell(\Y_k)), \rho_k(\L_r(\Y_k)))\}$ are bi-free inside the $B$-$B$-non-commutative probability space $\L(\ast_{m \in K} (\Y_m, \mathring{\Y}_m, \Psi_m))$.

For each $k \in K$ and $Z \in A_k$, consider the elements
\[
T_{k, Z} = \lambda_k(T_Z) \qqand S_{k, 1_B} = \rho_k(S_{1_B}).
\]
It is elementary to verify that if for each $k \in K$
\[
C'_k = \left\{ T_{k,Z} \, \mid \, Z \in A_k \right\} \qqand D'_k = \left\{ S_{k,1_B} \right\},
\]
then $\{(C'_k, D'_k)\}_{k \in K}$ is a bi-free Boolean $B$-system in $\{(\lambda_k(\L(\Y_k)), \rho_k(\L(\Y_k)))\}$.  In addition, it is clear by construction that for a fixed $k \in K$, if $Z_1, \ldots, Z_n \in A_k$ then
\begin{align}
E(T_{k,Z_1} S_{k,1_B} T_{k,Z_2} S_{k, 1_B} \cdots T_{k,Z_n} S_{k,1_B}) = \Phi(Z_1 \cdots Z_n). \label{eq:moment-expression-in-Boolean-construction}
\end{align}
\end{cons}

Combining Corollary \ref{cor:boolean-systems-give-boolean-independent-algebras} along with Construction \ref{cons:Boolean} we obtain the following.
\begin{thm}
\label{thm:Boolean-embeds-into-bi-free-Boolean-system}
Given a collection $\{A_k\}_{k \in K}$ of Boolean independent $B$-algebras in a $B$-non-commutative probability space $(\A_0, \Phi)$, there exists a bi-free Boolean $B$-system $\{(C'_k, D'_k)\}_{k \in K}$ inside a $B$-$B$-non-commutative probability space $(\A, E, \varepsilon)$ and injective $B$-linear maps $\beta_k : A_k \to C'_kD'_k$ such that 
\[
E(\beta_{k_1}(Z_1) \cdots \beta_{k_n}(Z_n)) = \Phi(Z_1 \cdots Z_n)
\]
for all $Z_m \in A_{k_m}$ and for all $k_m \in K$.  
\end{thm}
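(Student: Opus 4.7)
The plan is to combine Construction \ref{cons:Boolean} with Lemma \ref{lem:bi-free-that-looks-like-Boolean}. I take $\{(C'_k, D'_k)\}_{k \in K}$ to be the bi-free Boolean $B$-system produced by Construction \ref{cons:Boolean} inside $(\L(\ast_{m \in K} \Y_m), E, \varepsilon)$, and define
\[
\beta_k(Z) := T_{k, Z} S_{k, 1_B} \in C'_k D'_k \qfor Z \in A_k.
\]
The $B$-bilinearity $\beta_k(b_1 Z b_2) = L_{b_1} \beta_k(Z) L_{b_2}$ is immediate from the identities $L_b T_{k, Z} = T_{k, bZ}$ and $T_{k, Z} L_b = T_{k, Zb}$ in Construction \ref{cons:Boolean} together with $L_b S_{k, 1_B} = S_{k, 1_B} L_b$ (a consequence of $S_{1_B} \in \L_r(\A \oplus \A)$). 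Injectivity is verified by evaluating $\beta_k(Z)$ on the vacuum $1_B \oplus 0 \in \Y_k$, which yields $Z \oplus 0$; hence $\beta_k(Z) = 0$ forces $Z = 0$.

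For the moment identity, fix $k_1, \ldots, k_n \in K$ and $Z_m \in A_{k_m}$. Partition $\{1, \ldots, n\}$ into maximal consecutive intervals $I_1, \ldots, I_s$ on which the value $k_m$ is constant, say equal to $k'_p$ on $I_p$, so that $k'_p \neq k'_{p+1}$. Setting $W_p = \prod_{m \in I_p} \beta_{k'_p}(Z_m)$, we have $\beta_{k_1}(Z_1) \cdots \beta_{k_n}(Z_n) = W_1 \cdots W_s$, where each $W_p$ has the alternating form $T_{k'_p, Z_{m_1}} S_{k'_p, 1_B} \cdots T_{k'_p, Z_{m_{|I_p|}}} S_{k'_p, 1_B}$ demanded by Lemma \ref{lem:bi-free-that-looks-like-Boolean}.

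To invoke Lemma \ref{lem:bi-free-that-looks-like-Boolean}, the hypotheses $E(T_{k'_p, Z_{m_1}} \cdots T_{k'_p, Z_{m_q}}) = 0$ and $E(S_{k'_p, 1_B}^q) = 0$ must be checked for each block. For $q \geq 2$ these vanish identically since $(C'_k)^2 = \{0\} = (D'_k)^2$ in the bi-free Boolean $B$-system, and for $q = 1$ direct computation on the vacuum gives $E(T_{k, Z}) = 0$ and $E(S_{k, 1_B}) = \Psi_k(0 \oplus 1_B) = 0$. Lemma \ref{lem:bi-free-that-looks-like-Boolean} then yields
\[
E(W_1 \cdots W_s) = E(W_1) \cdots E(W_s).
\]
Equation (\ref{eq:moment-expression-in-Boolean-construction}) computes each factor as $E(W_p) = \Phi\!\left(\prod_{m \in I_p} Z_m\right)$, and the Boolean independence of $\{A_k\}_{k \in K}$ with respect to $\Phi$ (applicable since $k'_p \neq k'_{p+1}$) reassembles the product into $\Phi(Z_1 \cdots Z_n)$. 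The main conceptual obstacle is simply realizing that the nilpotency built into Definition \ref{defn:bi-free-Boolean-system} automatically satisfies the vanishing hypotheses of Lemma \ref{lem:bi-free-that-looks-like-Boolean} for these particular block products, so that the general moment formula reduces via the grouping step to the single-algebra identity of Construction \ref{cons:Boolean}.
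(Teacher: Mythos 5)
Your proof is correct and follows essentially the same route as the paper: define $\beta_k(Z) = T_{k,Z}S_{k,1_B}$, verify $B$-linearity, and reduce the mixed-moment identity to the Boolean independence of the algebras $\alg(C'_kD'_k)$ together with equation (\ref{eq:moment-expression-in-Boolean-construction}) --- you invoke Lemma \ref{lem:bi-free-that-looks-like-Boolean} directly with the block grouping spelled out, whereas the paper cites Corollary \ref{cor:boolean-systems-give-boolean-independent-algebras}, which rests on the same lemma. Your explicit verification of injectivity via the vacuum vector is a welcome detail that the paper's proof leaves implicit.
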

\begin{proof}
Using the notation of Construction \ref{cons:Boolean}, define $\beta_k : A_k \to C'_kD'_k$ by 
\[
\beta_k(Z) = T_{k,Z} S_{k,1_B}.
\]
Thus $\beta_k$ is linear and
\[
\beta_k(b_1Zb_2) = T_{k,b_1 Z b_2} S_{k,1_B} = L_{b_1} T_{k,Z} L_{b_2} S_{k,1_B} = L_{b_1} T_{k,Z} S_{k,1_B} L_{b_2}
\]
for all $b_1, b_2 \in B$ and $Z \in A_k$.  The result then follows from Corollary \ref{cor:boolean-systems-give-boolean-independent-algebras} and equation (\ref{eq:moment-expression-in-Boolean-construction}).
\end{proof}
Although Theorem \ref{thm:Boolean-embeds-into-bi-free-Boolean-system} may feel slightly unsatisfactory since the $\beta_k$ are, in general, not homomorphisms, the result enables the study of Boolean independence with amalgamation through larger Boolean independent algebras.  In particular, we show in Section \ref{sec:Boolean-Cumulants-Via-LR-Cumulants} that the operator-valued Boolean cumulant functions may be recovered through the operator-valued bi-free cumulant function and Theorem \ref{thm:Boolean-embeds-into-bi-free-Boolean-system}.  On the other hand, we note the following method for embedding Boolean independent algebras into algebras produced by bi-free pairs of $B$-faces using homomorphisms.
\begin{cons}
Let $(\mathcal{A}, \Phi)$ be a $B$-non-commutative probability space.  Viewing $(\A \oplus \A, \mathring{\A} \oplus \A, \Psi)$ as a $B$-$B$-module with specified $B$-vector state as in Construction \ref{cons:Boolean}, for $Z \in \A$ consider the special operators $T'_Z, U_{1_B} \in \L(\A \oplus \A)$ defined by
\[
T'_Z(Z_1 \oplus Z_2) = 0 \oplus ZZ_2 \qqand U_{1_B}(Z_1 \oplus Z_2) = Z_2 \oplus Z_1.
\]
Then $T'_Z \in \L_\ell(\A \oplus \A)$ and $U_{1_B} \in \L_\ell(\A \oplus \A) \cap \L_r(\A \oplus \A)$.  Furthermore, $T'_{z_1Z_1+z_2Z_2} = z_1T'_{Z_1} + z_2 T'_{Z_2}$ and $T'_{b_1 Z b_2} = L_{b_1} T'_Z L_{b_2}$ for all $z_1, z_2 \in \bC$, $b_1, b_2 \in B$, and $Z_1, Z_2, Z \in \A$.

Let $\{A_k\}_{k \in K}$ be $B$-algebras contained in $\A$ that are Boolean independent over $B$ with respect to $\Phi$.   Let $(\Y_k, \mathring{\Y}_k, \Psi_k)$, $E$, $\lambda_k$, and $\rho_k$ be as in Construction \ref{cons:Boolean}.

For each $k \in K$, consider the map $\beta_k : A_k \to \rho_k(\L_r(\Y_k)) \lambda_k(\L_\ell(\Y_k)) \rho_k(\L_r(\Y_k))$ defined by
\[
\beta_k(Z) = \rho_k(U_{1_B}) \lambda_k(T'_Z) \rho_k(U_{1_B}).
\]
Since $\rho_k$ and $\lambda_k$ are unital homomorphisms, and since $U_{1_B}^2 = I_{\A \oplus \A}$, $\beta_k$ is a homomorphism.  Finally, if $Z_m \in A_{k_m}$ with $k_m \neq k_{m+1}$ and $\eta_m =  (Z_m - \Phi(Z_m) 1_B) \oplus 0 \in \mathring{\Y}_{k_m}$, then, using an inductive argument similar to that in Lemma \ref{lem:bi-free-that-looks-like-Boolean}, we obtain for all $m \in \{1,\ldots, n\}$ that
\begin{align*}
\beta_{k_m}(Z_m) \cdots \beta_{k_n}(Z_n)(1_B \oplus 0) = \Phi(Z_1) \cdots \Phi(Z_n) (1_B \oplus 0) +  R_{\Phi(Z_{m+1}) \cdots \Phi(Z_{n})}(\eta_m).
\end{align*}
Therefore
\[
E(\beta_{k_1}(Z_1) \cdots \beta_{k_n}(Z_n)) = \Phi(Z_1) \cdots \Phi(Z_n) = \Phi(Z_1 \cdots Z_n).
\]
Hence the $B$-algebras $\{\beta_k(A_k)\}_{k \in K}$ are Boolean independent over $B$ with respect to $E$.
\end{cons}

\subsection{Boolean Bi-Non-Crossing Partitions}

To further our study of bi-free Boolean $B$-systems, we need to examine specific collections of bi-non-crossing partitions.  
\begin{defn}
A map $\chi : \{1,\ldots, 2n\}\to \{\ell,r\}$ is said to be \emph{alternating} if 
\[
\chi(k) = \left\{
\begin{array}{ll}
\ell & \mbox{if } k \mbox{ is odd}  \\
r & \mbox{if } k \mbox{ is even}
\end{array} \right. .
\]
Given an alternating map $\chi$, a bi-non-crossing partition $\pi \in BNC(\chi)$ is said to be \emph{Boolean} if $2k-1$ and $2k$ are in the same block of $\pi$ for all $k \in \{1,\ldots, n\}$.  The set of Boolean bi-non-crossing partitions is denoted by $\bncb(\chi)$.
\end{defn}

\begin{exam}
For the alternating $\chi : \{1,\ldots, 6\} \to \{\ell, r\}$, the elements of $\bncb(\chi)$ may be represented via the following bi-non-crossing diagrams.
\begin{align*}
	\begin{tikzpicture}[baseline]
			\draw[thick,dashed] (-.5,1.5) -- (-.5,-.25) -- (.5,-.25) -- (.5,1.5);
			\node[left] at (-.5,1.25) {1};
			\draw[black,fill=black] (-.5,1.25) circle (0.05);
			\node[right] at (.5,1) {2};
			\draw[black,fill=black] (.5,1) circle (0.05);
			\node[left] at (-.5,.75) {3};
			\draw[black,fill=black] (-.5,.75) circle (0.05);
			\node[right] at (.5,.5) {4};
			\draw[black,fill=black] (.5,.5) circle (0.05);
			\node[left] at (-.5,.25) {5};
			\draw[black,fill=black] (-.5,.25) circle (0.05);
			\node[right] at (.5,0) {6};
			\draw[black,fill=black] (.5,0) circle (0.05);
			\draw[thick, black] (.5,0) -- (0,0) -- (0, .25) -- (-.5,.25);
			\draw[thick, black] (.5,.5) -- (0,.5) -- (0, .75) -- (-.5,.75);
			\draw[thick, black] (.5,1) -- (0,1) -- (0, 1.25) -- (-.5,1.25);
	\end{tikzpicture}
	\qquad
	\begin{tikzpicture}[baseline]
			\draw[thick,dashed] (-.5,1.5) -- (-.5,-.25) -- (.5,-.25) -- (.5,1.5);
			\node[left] at (-.5,1.25) {1};
			\draw[black,fill=black] (-.5,1.25) circle (0.05);
			\node[right] at (.5,1) {2};
			\draw[black,fill=black] (.5,1) circle (0.05);
			\node[left] at (-.5,.75) {3};
			\draw[black,fill=black] (-.5,.75) circle (0.05);
			\node[right] at (.5,.5) {4};
			\draw[black,fill=black] (.5,.5) circle (0.05);
			\node[left] at (-.5,.25) {5};
			\draw[black,fill=black] (-.5,.25) circle (0.05);
			\node[right] at (.5,0) {6};
			\draw[black,fill=black] (.5,0) circle (0.05);
			\draw[thick, black] (.5,0) -- (0,0) -- (0, .25) -- (-.5,.25);
			\draw[thick, black] (.5,.5) -- (0,.5) -- (0, .75) -- (-.5,.75);
			\draw[thick, black] (.5,1) -- (0,1) -- (0, 1.25) -- (-.5,1.25);
			\draw[thick, black] (0,.25) -- (0, .5);
	\end{tikzpicture}
	\qquad
	\begin{tikzpicture}[baseline]
			\draw[thick,dashed] (-.5,1.5) -- (-.5,-.25) -- (.5,-.25) -- (.5,1.5);
			\node[left] at (-.5,1.25) {1};
			\draw[black,fill=black] (-.5,1.25) circle (0.05);
			\node[right] at (.5,1) {2};
			\draw[black,fill=black] (.5,1) circle (0.05);
			\node[left] at (-.5,.75) {3};
			\draw[black,fill=black] (-.5,.75) circle (0.05);
			\node[right] at (.5,.5) {4};
			\draw[black,fill=black] (.5,.5) circle (0.05);
			\node[left] at (-.5,.25) {5};
			\draw[black,fill=black] (-.5,.25) circle (0.05);
			\node[right] at (.5,0) {6};
			\draw[black,fill=black] (.5,0) circle (0.05);
			\draw[thick, black] (.5,0) -- (0,0) -- (0, .25) -- (-.5,.25);
			\draw[thick, black] (.5,.5) -- (0,.5) -- (0, .75) -- (-.5,.75);
			\draw[thick, black] (.5,1) -- (0,1) -- (0, 1.25) -- (-.5,1.25);
			\draw[thick, black] (0,1) -- (0, .75);
	\end{tikzpicture}
	\qquad
	\begin{tikzpicture}[baseline]
			\draw[thick,dashed] (-.5,1.5) -- (-.5,-.25) -- (.5,-.25) -- (.5,1.5);
			\node[left] at (-.5,1.25) {1};
			\draw[black,fill=black] (-.5,1.25) circle (0.05);
			\node[right] at (.5,1) {2};
			\draw[black,fill=black] (.5,1) circle (0.05);
			\node[left] at (-.5,.75) {3};
			\draw[black,fill=black] (-.5,.75) circle (0.05);
			\node[right] at (.5,.5) {4};
			\draw[black,fill=black] (.5,.5) circle (0.05);
			\node[left] at (-.5,.25) {5};
			\draw[black,fill=black] (-.5,.25) circle (0.05);
			\node[right] at (.5,0) {6};
			\draw[black,fill=black] (.5,0) circle (0.05);
			\draw[thick, black] (.5,0) -- (0,0) -- (0, .25) -- (-.5,.25);
			\draw[thick, black] (.5,.5) -- (0,.5) -- (0, .75) -- (-.5,.75);
			\draw[thick, black] (.5,1) -- (0,1) -- (0, 1.25) -- (-.5,1.25);
			\draw[thick, black] (0,.25) -- (0, .5);
			\draw[thick, black] (0,1) -- (0, .75);
	\end{tikzpicture}
\end{align*}
\end{exam}

\begin{rem}
Note if $\chi : \{1,\ldots, 2n\} \to \{\ell, r\}$ is alternating, then $\bncb(\chi)$ is naturally isomorphic to $\I(n)$; the lattice of interval partitions on $\{1,\ldots, n\}$.  Indeed define $\Psi : \I(n) \to \bncb(\chi)$ as follow: if 
\[
\pi = \{\{t_{k}, t_{k}+1, \ldots, t_{k+1} - 1\}\}^{m-1}_{k=1}
\]
for some sequence $t_1 = 1 < t_2 < t_3 < \cdots < t_m = n+1$, define
\[
\Psi(\pi) = \{\{2t_{k}-1, 2t_{k}, \ldots, 2t_{k+1} - 2\}\}^{m-1}_{k=1}.
\]
We do not use this relation in that which follows and derive the Boolean cumulant functions independently via $\bncb(\chi)$.
\end{rem}

\begin{rem}
\label{rem:partial-mobius-inversion-in-bnc-Boolean}
Fix $\chi : \{1,\ldots, 2n\} \to \{\ell, r\}$ alternating.  It is then clear that $\bncb(\chi)$ is a sublattice of $BNC(\chi)$ with maximal element $1_\chi$ and minimal element $0_{b,\chi}$ whose blocks are $\{\{2k-1, 2k\}\}_{k \in K}$.  As such, one can restrict $\mu_{BNC}$ to $\bncb$.  In particular, given $\pi, \sigma \in \bncb(\chi)$,
\[
\sum_{\substack{\tau \in \bncb(\chi) \\\pi \leq \tau \leq \sigma}} \mu_{BNC}(\tau, \sigma) = \sum_{\substack{\tau \in \bncb(\chi) \\ \pi \leq \tau \leq \sigma}} \mu_{BNC}(\pi, \tau) = \left\{
\begin{array}{ll}
1 & \mbox{if } \pi = \sigma  \\
0 & \mbox{otherwise}
\end{array} \right. .
\]
Hence there is a M\"{o}bius inversion inside $\bncb(\chi)$ by the proof of \cite{NS2006}*{Proposition 10.11}: if $X_\ell$ and $X_r$ are sets and
\[
f, g : \bigcup_{n\geq 1} \bigcup_{\substack{\chi : \{1,\ldots, 2n\} \to \{\ell, r\} \\ \chi \text{ alternating}}} \bncb(\chi) \times X_{\chi(1)} \times \cdots \times X_{\chi(n)} \to B
\]
are such that
\[
f(\pi, x_1, \ldots, x_{2n}) = \sum_{\substack{\sigma \in \bncb(\chi) \\ \sigma \leq \pi}} g(\sigma , x_1, \ldots, x_{2n})
\]
for all $\pi \in \bncb(\chi)$ and $x_k \in X_{\chi(k)}$, then
\[
g(1_\chi, x_1, \ldots, x_{2n}) = \sum_{\pi \in \bncb(\chi)} f(\pi, x_1, \ldots, x_{2n}) \mu_{BNC}(\pi, 1_\chi)
\]
for all $x_k \in X_{\chi(k)}$ and alternating $\chi : \{1,\ldots, 2n\} \to \{\ell, r\}$.
\end{rem}

\subsection{Boolean Cumulants via Bi-Free Operator-Valued Cumulants}
\label{sec:Boolean-Cumulants-Via-LR-Cumulants}

This section demonstrates only certain bi-free operator-valued cumulants (those corresponding to Boolean bi-non-crossing partitions) are necessary in order to compute the joint moments of elements from bi-free Boolean $B$-systems.  In particular, we develop the analogue of equations (\ref{eq:free-via-lr-cumulants}) and (\ref{eq:independent-via-LR-cumulants}) from Section \ref{sec:ClassicalAndFreeIndependence} in the Boolean setting and provide an alternate definition for the Boolean cumulant functions.  All results in this section make use of the following technical lemma.
\begin{lem}
\label{lem:certain-moment-terms-vanish-in-Boolean-systems}
Let $\{(C'_k, D'_k)\}_{k \in K}$ be a bi-free Boolean $B$-system in a $B$-$B$-non-commutative probability space $(\A, E, \varepsilon)$.  Let $\chi : \{1,\ldots, 2n\} \to \{\ell, r\}$ be alternating and let $\epsilon : \{1,\ldots, 2n\} \to K$ be such that $\epsilon(2m-1) = \epsilon(2m)$ for all $m \in \{1,\ldots, n\}$.  Furthermore, for each $k \in K$, let $T_k \in C'_{\epsilon(2k-1)}$ and let $S_k \in D'_{\epsilon(2k)}$.  If $\pi \in BNC(\chi)$ and $\pi \leq \epsilon$, then
\[
\mathcal{E}_\pi(T_1, S_1, \ldots, T_n, S_n) = 0
\]
unless $\pi \in \bncb(\chi)$. 
\end{lem}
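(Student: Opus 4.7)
My plan is to locate a single block of $\pi$ whose internal moment vanishes by the three defining conditions of a bi-free Boolean $B$-system, and then propagate this vanishing to $\mathcal{E}_\pi$ via bi-multiplicativity of $\mathcal{E}$.

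For each block $V = \{j_1 < \cdots < j_m\}$ of $\pi$, the constraint $\pi \leq \epsilon$ together with $\epsilon(2\ell-1) = \epsilon(2\ell)$ forces all operators indexed by $V$ to come from a single pair $(C'_{k_0}, D'_{k_0})$, with $T$-variables at odd indices and $S$-variables at even indices. Hence the block's contribution reduces to
\[
\mathcal{E}_{\pi|_V}\bigl((T_1, S_1, \ldots, T_n, S_n)|_V\bigr) \;=\; E(Z_{j_1} Z_{j_2} \cdots Z_{j_m}),
\]
where the alternation pattern of $C'_{k_0}$'s and $D'_{k_0}$'s is determined solely by the sequence $(j_1 \bmod 2, \ldots, j_m \bmod 2)$. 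The three defining conditions of Definition \ref{defn:bi-free-Boolean-system} force this value to vanish in each of the following cases: (i) some consecutive $j_i, j_{i+1}$ share a parity, by condition (\ref{cond:nilpotent-Boolean}); (ii) the parities alternate with both endpoints odd, by condition (\ref{cond:left-followedby-alternating-zero-moments}); or (iii) the parities alternate with both endpoints even, by condition (\ref{cond:right-followedby-alternating-zero-moments}). The only block-parity patterns that might yield a nonzero contribution are then (b) $j_1$ odd and $j_m$ even with alternating parities (the ``Boolean shape'') and (c) $j_1$ even and $j_m$ odd with alternating parities.

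The combinatorial heart of the proof is to show that if every block of $\pi$ is of pattern (b) or (c) and additionally every pattern (b) block consists of consecutive integers, then $\pi$ is Boolean. I would argue this in the $\prec_\chi$-picture in two steps. First, there can be no type (c) block: if $V$ has $\min V = j_1$ even, then $j_1-1$ is odd and lies in some other block $W$ whose $\prec_\chi$-position $j_1/2$ is strictly less than $V$'s minimum $\prec_\chi$-position $(j_2+1)/2$. Non-crossing forces $W$ into the initial segment $\{1, \ldots, \lfloor (j_2-1)/2 \rfloor\} \subseteq \{1, \ldots, n\}$ of $\prec_\chi$, so $W$ contains only odd original indices, contradicting the hypothesis that all blocks are mixed (b)/(c). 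Second, once only pattern (b) remains, any internal gap of some block $V$ (say $j_{i+1}-j_i \geq 3$) pushes a nearby element such as $j_1+1$ into a $\prec_\chi$-region of $V$ that lies entirely in the right half $\{n+1,\ldots,2n\}$; the block containing that element must then consist only of even original indices, again a contradiction. Hence the contrapositive holds: a non-Boolean $\pi$ must contain a block realizing one of the three zero-patterns.

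To finish, fix such a zero-pattern block $V$, so $\mathcal{E}_{\pi|_V} = 0$. Bi-multiplicativity of $\mathcal{E}$ (as reviewed in Section \ref{sec:Background}) reduces $\mathcal{E}_\pi(T_1, S_1, \ldots, T_n, S_n)$ to an iterated $E$-computation in which the scalar $\mathcal{E}_{\pi|_V}$ is slotted as $L_b$ or $R_b$ with $b = 0$ into an outer product; since $L_0 = R_0 = 0$ in $\mathcal{A}$ and $E$ then acts on a product containing a zero factor, the whole expression collapses to $0$. The main obstacle is the combinatorial step, particularly ruling out type (b)-with-gap and type (c) blocks in the $\prec_\chi$ translation; once those two ``resistant'' patterns are shown to force a zero-pattern block elsewhere, the algebraic conclusion is a direct application of the three defining properties of a bi-free Boolean $B$-system.
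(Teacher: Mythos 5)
Your strategy is genuinely different from the paper's: you classify the blocks of $\pi$ by parity pattern and argue by contraposition that a non-Boolean $\pi$ must contain a block whose internal moment is killed by one of conditions (\ref{cond:nilpotent-Boolean})--(\ref{cond:right-followedby-alternating-zero-moments}), whereas the paper shows directly, by induction on $k$, that $2k-1$ and $2k$ lie in a common block, at each stage isolating an all-left or all-right $\chi$-interval union of blocks whose moment vanishes. Your identification of patterns (b) and (c) as the only potentially surviving block types is correct, and the strategy can be made to work, but the ``combinatorial heart'' as stated contains a genuine error. In your first step you assert that non-crossing forces the block $W \ni j_1-1$ into the initial $\prec_\chi$-segment $\{1,\ldots,(j_2-1)/2\}$ and hence into the odd indices. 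This is false: $W$ may also contain elements whose $\prec_\chi$-positions lie \emph{after} $\max_{\prec_\chi}(V)$, namely even indices smaller than $j_1$, since then $V$ is simply nested inside $W$ and no crossing occurs. (For $n=3$ the order $\prec_\chi$ reads $1,3,5,6,4,2$; take $V=\{4,5\}$ and $W=\{2,3\}$, occupying positions $\{3,5\}$ and $\{2,6\}$: $W$ envelops $V$ and is itself of pattern (c), so no contradiction arrives in one step.) The same defect appears in your second step: for a gap between $j_{m-1}$ and $j_m$ the relevant $\prec_\chi$-region of $V$ contains \emph{both} odd indices exceeding $j_{m-1}$ and even indices exceeding $j_m$, so the trapped block need not be monochromatic; moreover your chosen witness $j_1+1$ has position $2n+1-(j_1+1)/2>\max_{\prec_\chi}(V)$ and does not lie in any internal region of $V$ at all. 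Both steps are repairable (take the pattern-(c) block of smallest even minimum; induct on the location of the first gap), but as written the key claims are false and the proof does not close.

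A secondary gap concerns the operator-valued bookkeeping. The reduction properties of the bi-multiplicative function $\E$ do not hand you the untouched moment $E(Z_{j_1}\cdots Z_{j_m})$ for an outer block: inner blocks insert factors $L_b$ and $R_b$ into that product before it is evaluated, so one must check that each zero-pattern remains zero after these insertions, using $L_bC'_k, C'_kL_b\subseteq C'_k$, the fact that $D'_k\subseteq\A_\ell$ commutes with every $R_b$, and $E(ZL_b)=E(ZR_b)$. The paper's proof confronts this by invoking the free independence over $B$ of the left faces (respectively right faces) among themselves when evaluating the restricted moments; your proposal does not address it. In the scalar case $B=\bC$ this issue disappears, and there your argument, once the combinatorial step is repaired, would be complete.
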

\begin{proof}
To simplify notation, let 
\[
\Theta_\pi = \mathcal{E}_\pi(T_1, S_1, \ldots, T_n, S_n).
\]
Notice by bi-multiplicativity that $\Theta_\pi = 0$ if $\pi$ has any blocks of cardinality one by conditions (\ref{cond:left-followedby-alternating-zero-moments}) and (\ref{cond:right-followedby-alternating-zero-moments}) in Definition \ref{defn:bi-free-Boolean-system}.  We claim that if $\Theta_\pi \neq 0$, then 1 and 2 must be in the same block of $\pi$.  To see this, suppose otherwise that 1 and 2 are in different blocks of $\pi$.  We divide the proof into two cases:

\underline{\textit{Case (1): The block $V$ of $\pi$ containing 1 contains a $k$ with $\chi(k) = r$.}}   Let 
\[
m_0 = \min\{m \, \mid \, 2m \in V, m \in \{1,\ldots, n\}\}.
\]
Note $m_0 \neq \infty$ and $m_0 \geq 2$ by the assumptions in this case.  Hence, since $\pi \in BNC(\chi)$,
\[
W = \{2m  \, \mid \,  m < m_0\}\subseteq \chi^{-1}(\{r\})
\]
is a non-empty union of blocks of $\pi$ disjoint from $V$ that is a $\chi$-interval.  Thus $\pi|_{W}$ is a non-crossing partition on $W$.  Writing $W = \{t_1 < t_2 < \cdots < t_q\}$ one sees, by using conditions (\ref{cond:nilpotent-Boolean}) and (\ref{cond:right-followedby-alternating-zero-moments}) along with the fact that right $B$-faces in bi-free pairs of $B$-faces are freely independent over $B$, that 
\[
\E_{\pi|_W}((T_1, S_1, T_2, S_2, \cdots, T_n, S_n)|_W) = 0.
\]
Thus $\Theta_\pi = 0$ by bi-multiplicativity.

\underline{\textit{Case (2): The block $V$ of $\pi$ containing 1 contains a $k$ with $k \neq 1$ and $\chi(k) = \ell$.}}  
Let 
\[
m_0 = \min\{m \, \mid \, 2m-1 \in V, m \in \{2,\ldots, n\}\}.
\]
Note $m_0 \neq \infty$ by the assumptions in this case.  If $m_0 = 2$, then $\epsilon(1) = \epsilon(3)$ since $\pi \leq \epsilon$.  Hence, since $2 \notin V$, we obtain by bi-multiplicativity that $\Theta_\pi =0$ as $T_1T_3 = 0$ by condition (\ref{cond:nilpotent-Boolean}).  Thus we may assume that $m_0 \geq 2$.  Since $\pi \in BNC(\chi)$, 
\[
W = \{2m - 1 \, \mid \, 1 < m < m_0\}\subseteq \chi^{-1}(\{\ell\})
\]
is a non-empty union of blocks of $\pi$ disjoint from $V$ that is a $\chi$-interval.  Thus $\pi|_{W}$ is a non-crossing partition on $W$.  Writing $W = \{t_1 < t_2 < \cdots < t_q\}$ one sees,  by using conditions (\ref{cond:nilpotent-Boolean}) and (\ref{cond:left-followedby-alternating-zero-moments}) of Definition \ref{defn:bi-free-Boolean-system} along with the fact that left $B$-faces in bi-free pairs of $B$-faces are freely independent over $B$, that 
\[
\E_{\pi|_W}((T_1, S_1, T_2, S_2, \cdots, T_n, S_n)|_W) = 0.
\]
Thus $\Theta_\pi = 0$ by bi-multiplicativity.

Cases (1) and (2) show that $1$ and $2$ must be in the same block of $\pi$ in order for $\Theta_\pi$ to be non-zero.  Suppose we have shown that $2k-1$ and $2k$  must be in the same block of $\pi$ for all $k \in \{1,\ldots, k_0 - 1\}$ in order for $\Theta_\pi$ to be non-zero.  We claim under this supposition that $\Theta_\pi \neq 0$ implies $2k_0 - 1$ and $2k_0$ must be in the case block of $\pi$.  To see this, suppose $2k_0 - 1$ and $2k_0$ are in different blocks of $\pi$.

If there exists an $m \in \{1,\ldots, 2k_0 - 2\}$ such that $2k_0 $ and $m$ are in the same block $W$ of $\pi$, then there exists a $k \in \{1,\ldots, k_0 - 1\}$ such that 
\[
2k-1, 2k, 2k+1, \ldots, 2k_0-2, 2k_0 \in W
\]
and $W \cap \{1,\ldots, 2k-2\} = \emptyset$.   In this case $\epsilon(q) = \epsilon(2k_0)$ for all $q \in \{2k-1, 2k, \ldots, 2k_0-2\}$ since $\pi \leq \epsilon$.  Therefore $\Theta_\pi = 0$ by bi-multiplicativity and since $T_{2k_0-2}T_{2k_0} = 0$ by condition (\ref{cond:nilpotent-Boolean}) of Definition \ref{defn:bi-free-Boolean-system}.  

Next, if there exists an $m \in \{1,\ldots, 2k_0 - 2\}$ such that $2k_0 - 1$ and $m$ are in the same block $W$ of $\pi$, then, by assumptions and since $\pi \in BNC(\chi)$, there exists a $k \in \{1,\ldots, k_0 - 1\}$ such that 
\[
2k-1, 2k, 2k+1, \ldots, 2k_0-1 \in W
\]
and $W \cap \{1,\ldots, 2k-2\} = \emptyset$.  In this case $\epsilon(q) = \epsilon(2k_0-1)$ for all $q \in \{2k-1, 2k, \ldots, 2k_0-2\}$ since $\pi \leq \epsilon$.  If $W = \{2k-1, 2k, \ldots, 2k_0 - 1\}$, then, by condition (\ref{cond:left-followedby-alternating-zero-moments}) of Definition \ref{defn:bi-free-Boolean-system}, by the fact that $T_k, S_k \in \A_\ell$, and by bi-multiplicativity, one obtains $\Theta_\pi = 0$.  If $W \neq \{2k-1, 2k, \ldots, 2k_0 - 1\}$, one can use the previous paragraph and arguments similar to those in cases (1) and (2) to show that $\Theta_\pi = 0$.  

Otherwise $\{1, 2, \ldots, 2k_0-2\}$ has empty intersection with the blocks of $\pi$ containing $2k_0-1$ and $2k_0$.  Therefore, by arguments identical to those in cases (1) and (2), $\Theta_\pi = 0$.  Hence the claim and thus proof is complete.
\end{proof}

Note the following produces the analogue to equations (\ref{eq:free-via-lr-cumulants}) and (\ref{eq:independent-via-LR-cumulants}) of Section \ref{sec:ClassicalAndFreeIndependence} for Boolean independence.
\begin{thm}
\label{thm:Boolean-formulae-work-as-they-should}
Let $\{(C'_k, D'_k)\}_{k \in K}$ be a bi-free Boolean $B$-system with respect to $E$.  If $\chi : \{1,\ldots, 2n\} \to \{\ell, r\}$ is alternating, $\epsilon : \{1,\ldots, 2n\} \to K$ is such that $\epsilon(2m-1) = \epsilon(2m)$ for all $m \in \{1,\ldots, n\}$, $T_k \in C'_{\epsilon(2k-1)}$, and $S_k \in D'_{\epsilon(2k)}$, then
\begin{align}
E(T_1S_1 \cdots T_nS_n) = \sum_{\pi \in \bncb(\chi)} \kappa_\pi(T_1, S_1, \ldots, T_n, S_n). \label{eq:Boolean-independent-via-LR-cumualnts}
\end{align}
Equivalently 
\[
\kappa_{1_\chi}(T_1, S_1, \ldots, T_n, S_n) = \sum_{\pi \in \bncb(\chi)} \E_\pi(T_1, S_1, \ldots, T_n, S_n) \mu_{BNC}(\pi, 1_\chi).
\]
Furthermore $\kappa_{1_\chi}(T_1, S_1, T_2, S_2, \ldots, T_n, S_n) = 0$ unless $\epsilon$ is constant.
\end{thm}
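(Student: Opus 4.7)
The plan is to establish the single vanishing statement $\kappa_\pi(T_1, S_1, \ldots, T_n, S_n) = 0$ for every $\pi \in BNC(\chi) \setminus \bncb(\chi)$, from which both the identity (\ref{eq:Boolean-independent-via-LR-cumualnts}) and its equivalent Möbius-inverted reformulation drop out via the standard moment-cumulant machinery together with the Möbius inversion inside $\bncb(\chi)$ from Remark \ref{rem:partial-mobius-inversion-in-bnc-Boolean}.

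I would begin with the last assertion of the theorem. Since $T_k \in C'_{\epsilon(2k-1)} \subseteq C_{\epsilon(2k-1)}$ and $S_k \in D'_{\epsilon(2k)} \subseteq D_{\epsilon(2k)}$, and the pairs $\{(C_k, D_k)\}_{k \in K}$ are bi-free over $B$, Theorem \ref{thm:bifree-classifying-theorem} immediately yields $\kappa_{1_\chi}(T_1, S_1, \ldots, T_n, S_n) = 0$ whenever $\epsilon$ is non-constant. Combining this with the bi-multiplicativity of $\kappa$ recalled in Section \ref{sec:Background}, which reduces $\kappa_\pi$ to an expression in which each block of $\pi$ contributes a $\kappa_{1_{\chi|_V}}$-type factor on its corresponding entries, upgrades the vanishing to $\kappa_\pi = 0$ for every $\pi \not\leq \epsilon$, since such a $\pi$ must contain a block whose entries come from distinct pairs.

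For the remaining case $\pi \leq \epsilon$ with $\pi \notin \bncb(\chi)$, I would apply the defining identity from Definition \ref{def:kappa} to write
\[
\kappa_\pi(T_1, S_1, \ldots, T_n, S_n) = \sum_{\substack{\sigma \in BNC(\chi)\\\sigma \leq \pi}} \E_\sigma(T_1, S_1, \ldots, T_n, S_n)\, \mu_{BNC}(\sigma, \pi).
\]
Every $\sigma \leq \pi$ also satisfies $\sigma \leq \epsilon$, so Lemma \ref{lem:certain-moment-terms-vanish-in-Boolean-systems} kills the contribution of each $\sigma \notin \bncb(\chi)$. But a surviving $\sigma \in \bncb(\chi)$ would have $2k-1$ and $2k$ in a common block for every $k$, and the refinement relation $\sigma \leq \pi$ would force the same pairing to persist in $\pi$, contradicting $\pi \notin \bncb(\chi)$. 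Hence the entire sum is empty and $\kappa_\pi = 0$.

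Combining the two cases, $\kappa_\pi = 0$ for every $\pi \notin \bncb(\chi)$. The standard moment-cumulant formula then gives
\[
\E_\pi(T_1, S_1, \ldots, T_n, S_n) = \sum_{\substack{\sigma \in \bncb(\chi)\\ \sigma \leq \pi}} \kappa_\sigma(T_1, S_1, \ldots, T_n, S_n)
\]
for every $\pi \in BNC(\chi)$. Specializing to $\pi = 1_\chi$ recovers (\ref{eq:Boolean-independent-via-LR-cumualnts}), and restricting to $\pi \in \bncb(\chi)$ before applying the Möbius inversion inside $\bncb(\chi)$ via Remark \ref{rem:partial-mobius-inversion-in-bnc-Boolean} produces the equivalent formulation. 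The main subtlety I anticipate is justifying cleanly that bi-multiplicativity of $\kappa$ delivers $\kappa_\pi = 0$ for $\pi \not\leq \epsilon$ in the operator-valued setting, since the reduction of $\kappa_\pi$ in a $B$-$B$-non-commutative probability space involves insertions of left and right operators rather than a plain product over blocks; everything else is routine once Lemma \ref{lem:certain-moment-terms-vanish-in-Boolean-systems} is in hand.
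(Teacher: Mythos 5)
Your proof is correct, but it is organized differently from the paper's. The paper never isolates the statement that $\kappa_\pi$ vanishes off $\bncb(\chi)$; instead it starts from the universal moment polynomial \eqref{eq:universal-polys} of Theorem \ref{thm:bifree-classifying-theorem}, in which the constraint $\sigma \leq \epsilon$ is already built in, and applies Lemma \ref{lem:certain-moment-terms-vanish-in-Boolean-systems} twice: once to restrict the outer sum to $\bncb(\chi)$, and once to enlarge an inner sum from $\bncb(\chi)$ back to $BNC(\chi)$ before reassembling the cumulants. You instead prove the cleaner intermediate fact that $\kappa_\pi(T_1,S_1,\ldots,T_n,S_n) = 0$ for every $\pi \in BNC(\chi)\setminus\bncb(\chi)$ and then quote the plain moment--cumulant relation $\E_{1_\chi} = \sum_{\sigma\in BNC(\chi)}\kappa_\sigma$. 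The price is that you must treat the partitions $\pi \not\leq \epsilon$ separately, via bi-multiplicativity of $\kappa$ and the vanishing of mixed $(\ell,r)$-cumulants; this is exactly the operator-valued subtlety you flag (the block reductions involve $L_b$ and $R_b$ insertions rather than a plain product, but a mixed block still contributes a full mixed cumulant equal to $0\in B$, which kills the whole expression), and it is a step the paper's route avoids entirely because \eqref{eq:universal-polys} only ever involves $\sigma \leq \epsilon$. The payoff is a stronger and more quotable intermediate statement (all non-Boolean $(\ell,r)$-cumulants of such a tuple vanish), and a tighter derivation of the M\"{o}bius-inverted form: you obtain $\E_\pi = \sum_{\sigma\in\bncb(\chi),\,\sigma\leq\pi}\kappa_\sigma$ for every $\pi\in\bncb(\chi)$, which is precisely the hypothesis Remark \ref{rem:partial-mobius-inversion-in-bnc-Boolean} requires, whereas the paper only records the case $\pi = 1_\chi$ explicitly. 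Both arguments rest on the same key lemma, and your handling of the final assertion (non-constant $\epsilon$) coincides with the paper's.
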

\begin{proof}
For the first claim, we notice by Definition \ref{def:kappa}, equation (\ref{eq:universal-polys}) in Theorem \ref{thm:bifree-classifying-theorem}, Remark \ref{rem:partial-mobius-inversion-in-bnc-Boolean}, and Lemma \ref{lem:certain-moment-terms-vanish-in-Boolean-systems} that
\begin{align*}
E(T_1S_1 \cdots T_nS_n) &=  \sum_{\pi \in BNC(\chi)} \left[ \sum_{\substack{ \sigma \in BNC(\chi) \\\pi \leq \sigma \leq \epsilon  }} \mu_{BNC}(\pi, \sigma)   \right] \E_\pi(T_1, S_1, \ldots, T_n, S_n) \\
&= \sum_{\pi \in \bncb(\chi)} \left[ \sum_{\substack{ \sigma \in BNC(\chi) \\\pi \leq \sigma \leq \epsilon  }}  \mu_{BNC}(\pi, \sigma)  \right] \E_\pi(T_1, S_1, \ldots, T_n, S_n) \\
&= \sum_{\pi \in \bncb(\chi)} \left[ \sum_{\substack{ \sigma \in \bncb(\chi) \\\pi \leq \sigma \leq \epsilon  }}  \mu_{BNC}(\pi, \sigma)  \right] \E_\pi(T_1, S_1, \ldots, T_n, S_n) \\
&= \sum_{\substack{\sigma \in \bncb(\chi) \\ \sigma \leq \epsilon}} \sum_{\substack{ \pi \in \bncb(\chi) \\\pi \leq \sigma }}\E_\pi(T_1, S_1, \ldots, T_n, S_n) \mu_{BNC}(\pi, \sigma) \\
&= \sum_{\substack{\sigma \in \bncb(\chi) \\ \sigma \leq \epsilon}} \sum_{\substack{ \pi \in BNC(\chi) \\\pi \leq \sigma }}\E_\pi(T_1, S_1, \ldots, T_n, S_n) \mu_{BNC}(\pi, \sigma) \\
&= \sum_{\sigma \in \bncb(\chi)} \kappa_\sigma(T_1, S_1, T_2, S_2, \ldots, T_n, S_n).
\end{align*}
The second claim follows from the first by the M\"{o}bius inversion in Remark \ref{rem:partial-mobius-inversion-in-bnc-Boolean} and the third equation follows since mixed operator-valued bi-free cumulants vanish by Theorem \ref{thm:bifree-classifying-theorem}.  
\end{proof}

\begin{rem}
\label{rem:boolean-cumulant-via-LR}
Using Theorem \ref{thm:Boolean-formulae-work-as-they-should}, we can easily construct a new definition for the operator-valued Boolean cumulant functions in terms of the operator-valued bi-free cumulant function.  Let $\{A_k\}_{k \in K}$ be $B$-algebras in a $B$-non-commutative probability space $(\A, \Phi)$ that are Boolean independent over $B$.  By Theorem \ref{thm:Boolean-embeds-into-bi-free-Boolean-system} and by Theorem \ref{thm:Boolean-formulae-work-as-they-should}, given $Z_m \in A_{k_m}$ we define the Boolean cumulant functions via the formula
\[
\kappa_{\text{B}}(Z_1, \ldots, Z_n) = \kappa_{1_\chi}(T_{k_1, Z_1}, S_{k_1, 1_B}, \ldots, T_{k_n, Z_n}, S_{k_n, 1_B}).
\]
In particular, the Boolean cumulants can be realized as $(\ell, r)$-cumulants.
\end{rem}

\section{Monotone Independence in Bi-Free Probability}
\label{sec:Monotone}

This section demonstrates how operator-valued monotone independence arises and can be studied inside bi-free probability.

\subsection{Monotone Independent Subalgebras from Bi-Free Pairs of Faces}

We begin by recalling the definition of monotone independence over $B$.
\begin{defn}
Let $(\A, \Phi)$ be a $B$-non-commutative probability space and let $(A_\lambda)_{\lambda \in \Lambda}$ be $B$-algebras contained in $\A$.  Given a linear ordering $<$ on $\Lambda$, the collection $(A_\lambda)_{\lambda \in \Lambda}$ is said to be \emph{monotonically independent with amalgamation over $B$ with respect to $(\Phi, <)$} (or simply \emph{monotonically independent over $B$}) if
\[
\Phi(Z_1 Z_2 \cdots Z_{k-1}Z_kZ_{k+1} \cdots Z_n) = \Phi(Z_1 Z_2 \cdots Z_{k-1} \Phi(Z_k) Z_{k+1} \cdots Z_n)
\]
whenever $Z_m \in A_{\lambda_m}$, $\lambda_{k} > \lambda_{k-1}$, and $\lambda_k > \lambda_{k+1}$ (where one inequality is irrelevant when $k = 1$ or $k = n$).  

Similarly, the collection $(A_\lambda)_{\lambda \in \Lambda}$ is said to be \emph{anti-monotonically independent over $B$ with respect to $(\Phi, <)$} if $(A_\lambda)_{\lambda \in \Lambda}$ are monotonically independent over $B$ with respect to $(\Phi, >)$.
\end{defn}
We only deal with the case $\Lambda = \{1,2\}$ equipped the natural ordering $1 < 2$ in which case we simply say that $A_1$ and $A_2$ are monotonically independent over $B$.

The following demonstrates a way to construct monotonically independent $B$-algebras from bi-free pairs of $B$-faces in the spirit of Theorem \ref{thm:algebras-are-Boolean-independent}.
\begin{thm}
\label{thm:monotone-independence-of-algebras}
Let $\{(C_1, D_1), (C_2, D_2)\}$ be bi-free pairs of $B$-faces in a $B$-$B$-non-commutative probability space $(\A, E, \varepsilon)$.  Suppose $D'_1 \subseteq D_1 \cap \A_\ell$ and $C'_1 \subseteq C_1$ is such that 
\[
L_b C'_1, C'_1 L_b \subseteq C'_1  \text{ for all }b \in B\qqand E((C'_1)^n) = \{0\} \text{ for all }n.
\]
Then $\alg(C'_1D'_1)$ and $C_2$ are $B$-algebras that are monotonically independent over $B$ in the $B$-non-commutative probability space $(\A_\ell, E)$.
\end{thm}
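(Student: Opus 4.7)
My plan is to reduce the monotone identity to a vanishing statement via the bi-free cumulant expansion and then exploit the hypothesis $E((C'_1)^n)=\{0\}$. I would first verify that $\alg(C'_1D'_1)$ is genuinely a $B$-algebra of $(\A_\ell, E)$: since $D'_1 \subseteq \A_\ell \cap \A_r$, every $S \in D'_1$ commutes with $L_b$, and combined with $L_b C'_1, C'_1 L_b \subseteq C'_1$ this gives closure of $\alg(C'_1D'_1)$ under left and right $B$-multiplication; closure of $C_2$ is immediate since $\varepsilon(B \otimes 1_B) \subseteq C_2$. After combining adjacent same-algebra entries in an arbitrary word and writing $W := Z_k - L_{E(Z_k)}$ for the designated $C_2$-entry to be pulled out, the monotone identity becomes the statement
\[
E(Z_1 \cdots Z_{k-1} W Z_{k+1} \cdots Z_n) = 0
\]
whenever $Z_{k \pm 1} \in \alg(C'_1 D'_1)$, $E(W) = 0$, and the $Z_i$ for $i \notin \{k-1,k,k+1\}$ are arbitrary in $\alg(C'_1D'_1) \cup C_2$.

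The main step is to expand each entry of $\alg(C'_1D'_1)$ as an alternating product $T_1 S_1 \cdots T_p S_p$ with $T_j \in C'_1$ and $S_j \in D'_1$ and apply Theorem \ref{thm:bifree-classifying-theorem} to the resulting word to obtain
\[
E(\text{expanded word}) = \sum_{\substack{\sigma \in BNC(\chi) \\ \sigma \leq \epsilon}} \kappa_\sigma,
\]
where $\epsilon$ labels each position by its pair (pair~$1$ for $C'_1, D'_1$-entries and pair~$2$ for $C_2$-entries). I would classify $\sigma$ by the block $V$ of $\sigma$ containing the position of $W$; since $\sigma \leq \epsilon$, this block lies inside the set of pair-$2$ positions. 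In the singleton case $V=\{W\}$, the cumulant $\kappa_V(W)=E(W)=0$ and bi-multiplicativity force $\kappa_\sigma=0$. Otherwise $V$ contains $W$ together with further $C_2$-entries $W_{i_1} \prec_\chi \cdots \prec_\chi W_{i_m}$; the bi-non-crossing condition then forbids any other block from crossing $V$, and a short combinatorial argument shows that the $\chi=\ell$ positions coming from the $T$-entries of the $\alg(C'_1D'_1)$-elements that lie between two consecutive members of $V$ in $\prec_\chi$ are trapped into non-crossing sub-partitions entirely inside that gap. Summing over all such sub-partitions produces, by the universal formula for a single pair, exactly $E(T_1 \cdots T_q) \in E((C'_1)^q) = \{0\}$ by hypothesis, so bi-multiplicativity forces $\kappa_\sigma=0$ in this case as well. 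Combining the two cases establishes the required vanishing.

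The step I expect to be most delicate is the combinatorial verification in the non-singleton case: regardless of whether $V$'s range contains further nested non-singleton pair-$2$ blocks, isolated singleton pair-$2$ positions, or blocks involving the $r$-positions from $D'_1$-entries, one must confirm that the $T$-entries in each gap of $V$ really are restricted to non-crossing sub-partitions confined to that gap, and that bi-multiplicativity cleanly isolates a multiplicative factor of the form $E((C'_1)^q)$ inside $\kappa_\sigma$. Locating an \emph{innermost} trapped segment of $C'_1$-positions in every non-singleton $\sigma$ — even in the presence of nested inner non-singleton blocks — will be the crux of making the argument fully rigorous.
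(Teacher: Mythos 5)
Your proposal is correct and follows essentially the same route as the paper: expand the moment via Theorem \ref{thm:bifree-classifying-theorem} and kill every partition whose block through a $C_2$-position meets a second $C_2$-position by locating trapped $\chi$-intervals of $C'_1$-elements whose moments vanish, the only cosmetic difference being that you centre the distinguished $C_2$-entry while the paper instead reassembles the surviving all-singleton terms directly into $E(L_{E(Z_0)}(\prod T_{1,k}S_{1,k})L_{E(Z_1)}\cdots)$. The delicate point you flag (interleaved $C_2$-singletons inside a gap, so that one gets $E(T_1L_{b_1}T_2\cdots)$ rather than literally $E(T_1\cdots T_q)$) is resolved exactly as you suspect, by absorbing the resulting $L_{E(Z)}$ factors into $C'_1$ via the hypothesis $L_bC'_1, C'_1L_b\subseteq C'_1$, which is also how the paper's argument implicitly works.
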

\begin{proof}
Recall by Remark \ref{rem:B-ncps-from-B-B-ncps} that $(\A_\ell, E)$ is a $B$-non-commutative probability space with $\varepsilon(B \otimes 1_B)$ as the copy of $B$.  Recall
\[
\varepsilon(B \otimes 1_B) \subseteq C_2 \subseteq \A_\ell
\]
so $C_2$ is a $B$-algebra of $\A_\ell$.  In addition, by the conditions on $C'_1$ and $D'_1$, it is clear that $\alg(C'_1D'_1)$ is a $B$-algebra.

To show that $\alg(C'_1D'_1)$ and $C_2$ are monotonically independent over $B$, it suffices to show that
\begin{align*}
E &\left(Z_0 \left( \prod^{m_1}_{k=1} T_{1,k}S_{1,k} \right)Z_1 \left( \prod^{m_2}_{k=1} T_{2,k}S_{2,k} \right) \cdots \left( \prod^{m_n}_{k=1} T_{n,k}S_{n,k} \right) Z_n     \right) \\
&= E\left(L_{E(Z_0)} \left( \prod^{m_1}_{k=1} T_{1,k}S_{1,k} \right)L_{E(Z_1)} \left( \prod^{m_2}_{k=1} T_{2,k}S_{2,k} \right) \cdots \left( \prod^{m_n}_{k=1} T_{n,k}S_{n,k} \right) L_{E(Z_n)}    \right)
\end{align*}
for all $n \in \mathbb{N}$, $Z_0, Z_1, \ldots, Z_n \in C_2$, $m_1, \ldots, m_n \in \mathbb{N}$, $\{\{T_{q,k}\}^{m_q}_{k=1}\}^n_{q=1} \subseteq C'_1$, and $\{\{S_{q,k}\}^{m_q}_{k=1}\}^n_{q=1} \subseteq D'_1$.  Let 
\[
\chi : \left\{1, \ldots, 1 + n + 2\sum^{n}_{k=1} m_k\right\} \to \{\ell, r\}
\]
be determined by the above sequence of operators; that is, $\chi(m) = r$ if $m$ is the index of some $S_{q,k}$ in the above sequence of operators and $\chi(m) = \ell$ otherwise.  Similarly, let 
\[
\epsilon: \left\{1, \ldots, 1 + n + 2\sum^{n}_{k=1} m_k\right\} \to \{1,2\}
\]
be determined by the above sequence of operators; that is, $\epsilon(m) = 2$ if $m$ is the index of some $Z_k$ in the above sequence of operators and $\epsilon(m) = 1$ otherwise.  Therefore, since $\{(C_1, D_1), (C_2, D_2)\}$ is a bi-free family, 
\begin{align*}
E &\left(Z_0 \left( \prod^{m_1}_{k=1} T_{1,k} S_{1,k} \right)Z_1 \left( \prod^{m_2}_{k=1} T_{2,k} S_{2,k} \right) \cdots Z_{n-1}\left( \prod^{m_n}_{k=1} T_{n,k} S_{n,k} \right) Z_n     \right) \\
&= \sum_{\substack{\pi \in BNC(\chi) \\ \pi \leq \epsilon}} \kappa_\pi(Z_0, T_{1,1}, S_{1,1}, T_{1,2}, S_{1,2}, \ldots, T_{n,m_n}, S_{n, m_n}, Z_n).
\end{align*}

We claim that if $\pi \in BNC(\chi)$ is such that $\pi \leq \epsilon$ and $\pi$ contains a block containing at least two indices corresponding to different $Z_k$, then
\[
\kappa_\pi(Z_0, T_{1,1}, S_{1,1}, T_{1,2}, S_{1,2}, \ldots, T_{n,m_n}, S_{n, m_n}, Z_n) = 0.
\]
Indeed by Definition \ref{def:kappa}
\begin{align*}
\kappa_\pi&(Z_0, T_{1,1}, S_{1,1}, T_{1,2}, S_{1,2}, \ldots, T_{n,m_n}, S_{n, m_n}, Z_n) \\
&= \sum_{\substack{\sigma \in BNC(\chi) \\ \sigma \leq \pi}} \mathcal{E}_\sigma(Z_0, T_{1,1}, S_{1,1}, T_{1,2}, S_{1,2}, \ldots, T_{n,m_n}, S_{n, m_n}, Z_n) \mu_{BNC}(\sigma, \pi).
\end{align*}
However due to the nature of bi-non-crossing partitions, each $\sigma \leq \pi$ has a block $W$ that is a $\chi$-interval and all of whose indices are elements of $C'_1$.  Therefore, since $E((C'_1)^n) = \{0\}$ for all $n$, 
\[
\mathcal{E}_{\sigma|_W}((Z_0, T_{1,1}, S_{1,1}, T_{1,2}, S_{1,2}, \ldots, T_{n,m_n}, S_{n, m_n}, Z_n)|_W) = 0
\]
and bi-multiplicativity implies
\[
\mathcal{E}_\sigma(Z_0, T_{1,1}, S_{1,1}, T_{1,2}, S_{1,2}, \ldots, T_{n,m_n}, S_{n, m_n}, Z_n) = 0
\]
so the claim follows.  

Therefore every $\pi \in BNC(\chi)$ which has a non-zero contribution to sum in the moment expression has the indices corresponding to each $Z_k$ as singletons.  If
\[
\chi_0 : \left\{1, \ldots, 2\sum^n_{k=1} m_k\right\} \to \{\ell, r\}
\]
is alternating, then by restricting to non-zero contributions in the sum and using the properties of bi-multiplicative functions, we obtain
\begin{align*}
&E \left(Z_0 \left( \prod^{m_1}_{k=1} T_{1,k} S_{1,k} \right)Z_1 \left( \prod^{m_2}_{k=1} T_{2,k} S_{2,k} \right) \cdots Z_{n-1}\left( \prod^{m_n}_{k=1} T_{n,k} S_{n,k} \right) Z_n     \right) \\
&= \sum_{\pi_0 \in BNC(\chi_0)} \kappa_{\pi_0}(L_{E(Z_0)}T_{1,1}, S_{1,1}, \ldots, L_{E(Z_1)} T_{2,1}, S_{2,1}, \ldots, T_{n,m_n}, S_{n, m_n}R_{E(Z_n)}) \\
&= E\left(L_{E(Z_0)} \left( \prod^{m_1}_{k=1} T_{1,k}S_{1,k} \right)L_{E(Z_1)} \left( \prod^{m_2}_{k=1} T_{2,k}S_{2,k} \right) \cdots \left( \prod^{m_n}_{k=1} T_{n,k}S_{n,k} \right) R_{E(Z_n)}    \right) \\
&= E\left(L_{E(Z_0)} \left( \prod^{m_1}_{k=1} T_{1,k}S_{1,k} \right)L_{E(Z_1)} \left( \prod^{m_2}_{k=1} T_{2,k}S_{2,k} \right) \cdots \left( \prod^{m_n}_{k=1} T_{n,k}S_{n,k} \right) L_{E(Z_n)}    \right)
\end{align*}
as required.
\end{proof}

\begin{exam}
Let $\mathbb{F}_2$ denote the free group on $2$ generators $u_1, u_2$ and let $\varphi$ be the vector state on $\B(\ell_2(\mathbb{F}_2))$ corresponding to the point mass at the identity.  If $\lambda, \rho : \mathbb{F}_2 \to \B(\ell_2(\mathbb{F}_2))$ denote the left and right regular representations respectively, recall $\{(\lambda(u_k), \rho(u_k))\}_{k =1,2}$ are bi-free two-faced families with respect to $\varphi$.  Hence Theorem \ref{thm:monotone-independence-of-algebras} implies that 
\[
\alg \left( \left\{ \lambda(u_1^p) \rho(u_1^q) \, \mid \, p \in \mathbb{N}, q \in \mathbb{Z} \right\} \right)   \qand   \alg \left( \left\{ \lambda(u_2^p) \, \mid \, p \in \mathbb{Z} \right\} \right)
\]
are monotonically independent with respect to $\varphi$.
\end{exam}

\begin{rem}
One can easily modify the example in Remark \ref{rem:operator-model-counterexample} by choosing $T_1, T_2, S_1, S_2$ such that all $(\ell, r)$-cumulants involving these operators are zero except $\kappa_\chi(T_1, T_2, T_2, S_1, S_2)=1$ to construct $C'_1, C'_2, D'_1, D'_2$ such that the correct algebras are classically, freely, monotonically, or Boolean independent yet
\[
\{(\bC 1_\A + \alg(C'_k), \bC 1_\A +\alg(D'_k))\}_{k \in \{1,2\}}
\]
are not bi-free.
\end{rem}

\subsection{Embedding Monotone Independence into Bi-Free Probability}

This section describes a construction for which given any pair of $B$-algebras $A_1$ and $A_2$ that are monotonically independent over $B$ there exist $C'_1, C_2, D'_1, D_2$ as in Theorem \ref{thm:monotone-independence-of-algebras} such that one may view $A_1 \subseteq \alg(C'_1D'_1)$ and $A_2 \subseteq C_2$.

\begin{cons}
\label{cons:monotone2}
Let $(\mathcal{A}, \Phi)$ be a $B$-non-commutative probability space.  Recall as in Construction \ref{cons:Boolean} that we may view $\A$ as a $B$-$B$-module with specified $B$-vector state, which we denote by $(\A, \mathring{\A}, \Phi)$, and $\A \oplus \A$ can be made into a $B$-$B$-module with specified $B$-vector state $(\A \oplus \A, \mathring{\A} \oplus \A, \Psi)$ where
\[
\Psi(Z_1 \oplus Z_2) = \Phi(Z_1).
\]

Let $A_1$ and $A_2$ be $B$-algebras contained in $\A$ that are monotonically independent over $B$ with respect to $\Phi$. Let $(\Y_1, \mathring{\Y}_1, \Psi_1)$ denote $(\A \oplus \A, \mathring{\A} \oplus \A, \Psi)$, let $(\Y_2, \mathring{\Y}_2, \Psi_2)$ denote $(\A, \mathring{\A}, \Phi)$, let $E$ denote the expectation of $\L(\ast_{m \in \{1,2\}}(\Y_m, \mathring{\Y}_m, \Psi_k))$ onto $B$, and let
\begin{align*}
\lambda_k &: \L_\ell(\Y_k) \to \L_\ell(\ast_{m \in \{1,2\}} (\Y_m, \mathring{\Y}_m, \Psi_m)) \quad \text{and} \\
\rho_k &: \L_r(\Y_k) \to \L_r(\ast_{m \in \{1,2\}} (\Y_m, \mathring{\Y}_m, \Psi_m))
\end{align*}
be the left and right regular representations onto the $k^{\text{th}}$ term respectively.  By definition the pairs of $B$-faces $\{(\lambda_k(\L_\ell(\Y_k)), \rho_k(\L_r(\Y_k)))\}$ are bi-free inside the $B$-$B$-non-commutative probability space $\L(\ast_{m \in \{1,2\}} (\Y_m, \mathring{\Y}_m, \Psi_m))$.

Let $C_2 = \lambda_2(\L_\ell(\Y_2))$, let $D_2 = \rho_2(\L_r(\Y_2))$, let $D'_1 = \{\rho_1(S_{1_B})\} \subseteq \rho_1(\L_r(\Y_k))$, and let
\[
C'_1 = \{ \lambda_1(T_Z) \, \mid \, Z \in A_1\} \subseteq \lambda_1(\L_\ell(\Y_1))
\]
where $S_{1_B}$ and $T_Z$ were as defined in Construction \ref{cons:Boolean}.  It is clear by Construction \ref{cons:Boolean} that $C'_1, C_2, D'_1, D_2$ satisfy the assumptions of Theorem \ref{thm:monotone-independence-of-algebras}.  Furthermore, if one considers the unital homomorphism  $\beta_2 : A_2 \to C_2$ defined by 
\[
\beta_2(Z) = \lambda_2(Z)
\]
where one views $Z \in \A_\ell$ for all $Z \in A_2$ by left multiplication, and if one considers the linear map $\beta_1 : A_1 \to \alg(C'_1D'_1)$ defined by
\[
\beta_1(Z) = \lambda_1(T_Z)\rho_1(S_{1_B})
\]
for all $Z \in A_1$, Theorem \ref{thm:monotone-independence-of-algebras} implies the joint distributions of elements of $A_1$ and $A_2$ with respect to $\Phi$ are equal to the joint distributions of their images under $\beta_1$ and $\beta_2$ respectively with respect to $E$.
\end{cons}

\subsection{Monotone Bi-Non-Crossing Partitions}

To develop the analogue of equations (\ref{eq:free-via-lr-cumulants}) and (\ref{eq:independent-via-LR-cumulants}) of Section \ref{sec:ClassicalAndFreeIndependence} for monotone independence, we need to restrict ourselves to specific bi-non-crossing partitions.
\begin{defn}
Let $\chi : \{1,\ldots, n\}\to \{\ell,r\}$.  A bi-non-crossing partition $\pi \in BNC(\chi)$ is said to be \emph{monotone} if $\pi \in \bncs(\chi)$ and whenever $m, p, q \in \{1,\ldots, n\}$ are such that $m < p < q$, $\chi(p) = r$, $\chi(m) = \chi(q) = \ell$, then $m$ and $q$ are not in the same block of $\pi$.  The set of monotone bi-non-crossing partitions is denoted by $\bncm(\chi)$.
\end{defn}

\begin{exam}
For $\chi : \{1,\ldots, 6\} \to \{\ell, r\}$ with $\chi^{-1}(\{\ell\}) = \{2,3,4,6\}$ and $\chi^{-1}(\{r\}) = \{1,5\}$, the elements of $\bncm(\chi)$ may be represented via the following bi-non-crossing diagrams.
\begin{align*}
	\begin{tikzpicture}[baseline]
			\draw[thick,dashed] (-.5,2.75) -- (-.5,-.25) -- (.5,-.25) -- (.5,2.75);
			\node[right] at (.5,2.5) {1};
			\draw[black,fill=black] (.5,2.5) circle (0.05);
			\node[left] at (-.5,2) {2};
			\draw[black,fill=black] (-.5,2) circle (0.05);
			\node[left] at (-.5,1.5) {3};
			\draw[black,fill=black] (-.5,1.5) circle (0.05);
			\node[left] at (-.5,1) {4};
			\draw[black,fill=black] (-.5,1) circle (0.05);
			\node[right] at (.5,.5) {5};
			\draw[black,fill=black] (.5,.5) circle (0.05);
			\node[left] at (-.5,0) {6};
			\draw[black,fill=black] (-.5,0) circle (0.05);
	\end{tikzpicture}	
	\quad
	\begin{tikzpicture}[baseline]
			\draw[thick,dashed] (-.5,2.75) -- (-.5,-.25) -- (.5,-.25) -- (.5,2.75);
			\node[right] at (.5,2.5) {1};
			\draw[black,fill=black] (.5,2.5) circle (0.05);
			\node[left] at (-.5,2) {2};
			\draw[black,fill=black] (-.5,2) circle (0.05);
			\node[left] at (-.5,1.5) {3};
			\draw[black,fill=black] (-.5,1.5) circle (0.05);
			\node[left] at (-.5,1) {4};
			\draw[black,fill=black] (-.5,1) circle (0.05);
			\node[right] at (.5,.5) {5};
			\draw[black,fill=black] (.5,.5) circle (0.05);
			\node[left] at (-.5,0) {6};
			\draw[black,fill=black] (-.5,0) circle (0.05);
			\draw[thick, black] (-.5,2) -- (-.125,2) -- (-.125, 1.5) -- (-.5,1.5);
	\end{tikzpicture}	
	\quad
	\begin{tikzpicture}[baseline]
			\draw[thick,dashed] (-.5,2.75) -- (-.5,-.25) -- (.5,-.25) -- (.5,2.75);
			\node[right] at (.5,2.5) {1};
			\draw[black,fill=black] (.5,2.5) circle (0.05);
			\node[left] at (-.5,2) {2};
			\draw[black,fill=black] (-.5,2) circle (0.05);
			\node[left] at (-.5,1.5) {3};
			\draw[black,fill=black] (-.5,1.5) circle (0.05);
			\node[left] at (-.5,1) {4};
			\draw[black,fill=black] (-.5,1) circle (0.05);
			\node[right] at (.5,.5) {5};
			\draw[black,fill=black] (.5,.5) circle (0.05);
			\node[left] at (-.5,0) {6};
			\draw[black,fill=black] (-.5,0) circle (0.05);
			\draw[thick, black] (-.5,1.5) -- (-.125,1.5) -- (-.125, 1) -- (-.5,1);
	\end{tikzpicture}	
	\quad 
	\begin{tikzpicture}[baseline]
			\draw[thick,dashed] (-.5,2.75) -- (-.5,-.25) -- (.5,-.25) -- (.5,2.75);
			\node[right] at (.5,2.5) {1};
			\draw[black,fill=black] (.5,2.5) circle (0.05);
			\node[left] at (-.5,2) {2};
			\draw[black,fill=black] (-.5,2) circle (0.05);
			\node[left] at (-.5,1.5) {3};
			\draw[black,fill=black] (-.5,1.5) circle (0.05);
			\node[left] at (-.5,1) {4};
			\draw[black,fill=black] (-.5,1) circle (0.05);
			\node[right] at (.5,.5) {5};
			\draw[black,fill=black] (.5,.5) circle (0.05);
			\node[left] at (-.5,0) {6};
			\draw[black,fill=black] (-.5,0) circle (0.05);
			\draw[thick, black] (-.5,2) -- (-.125,2) -- (-.125, 1) -- (-.5,1);
	\end{tikzpicture}	
	\quad 
	\begin{tikzpicture}[baseline]
			\draw[thick,dashed] (-.5,2.75) -- (-.5,-.25) -- (.5,-.25) -- (.5,2.75);
			\node[right] at (.5,2.5) {1};
			\draw[black,fill=black] (.5,2.5) circle (0.05);
			\node[left] at (-.5,2) {2};
			\draw[black,fill=black] (-.5,2) circle (0.05);
			\node[left] at (-.5,1.5) {3};
			\draw[black,fill=black] (-.5,1.5) circle (0.05);
			\node[left] at (-.5,1) {4};
			\draw[black,fill=black] (-.5,1) circle (0.05);
			\node[right] at (.5,.5) {5};
			\draw[black,fill=black] (.5,.5) circle (0.05);
			\node[left] at (-.5,0) {6};
			\draw[black,fill=black] (-.5,0) circle (0.05);
			\draw[thick, black] (-.5,2) -- (-.125,2) -- (-.125, 1) -- (-.5,1);
			\draw[thick, black] (-.5,1.5) -- (-.125,1.5);
	\end{tikzpicture}	\\ \\
	\begin{tikzpicture}[baseline]
			\draw[thick,dashed] (-.5,2.75) -- (-.5,-.25) -- (.5,-.25) -- (.5,2.75);
			\node[right] at (.5,2.5) {1};
			\draw[black,fill=black] (.5,2.5) circle (0.05);
			\node[left] at (-.5,2) {2};
			\draw[black,fill=black] (-.5,2) circle (0.05);
			\node[left] at (-.5,1.5) {3};
			\draw[black,fill=black] (-.5,1.5) circle (0.05);
			\node[left] at (-.5,1) {4};
			\draw[black,fill=black] (-.5,1) circle (0.05);
			\node[right] at (.5,.5) {5};
			\draw[black,fill=black] (.5,.5) circle (0.05);
			\node[left] at (-.5,0) {6};
			\draw[black,fill=black] (-.5,0) circle (0.05);
			\draw[thick, black] (.5,.5) -- (0.125,.5) -- (0.125, 2.5) -- (.5,2.5);
	\end{tikzpicture}	
	\quad
	\begin{tikzpicture}[baseline]
			\draw[thick,dashed] (-.5,2.75) -- (-.5,-.25) -- (.5,-.25) -- (.5,2.75);
			\node[right] at (.5,2.5) {1};
			\draw[black,fill=black] (.5,2.5) circle (0.05);
			\node[left] at (-.5,2) {2};
			\draw[black,fill=black] (-.5,2) circle (0.05);
			\node[left] at (-.5,1.5) {3};
			\draw[black,fill=black] (-.5,1.5) circle (0.05);
			\node[left] at (-.5,1) {4};
			\draw[black,fill=black] (-.5,1) circle (0.05);
			\node[right] at (.5,.5) {5};
			\draw[black,fill=black] (.5,.5) circle (0.05);
			\node[left] at (-.5,0) {6};
			\draw[black,fill=black] (-.5,0) circle (0.05);
			\draw[thick, black] (-.5,2) -- (-.125,2) -- (-.125, 1.5) -- (-.5,1.5);
			\draw[thick, black] (.5,.5) -- (0.125,.5) -- (0.125, 2.5) -- (.5,2.5);
	\end{tikzpicture}	
	\quad
	\begin{tikzpicture}[baseline]
			\draw[thick,dashed] (-.5,2.75) -- (-.5,-.25) -- (.5,-.25) -- (.5,2.75);
			\node[right] at (.5,2.5) {1};
			\draw[black,fill=black] (.5,2.5) circle (0.05);
			\node[left] at (-.5,2) {2};
			\draw[black,fill=black] (-.5,2) circle (0.05);
			\node[left] at (-.5,1.5) {3};
			\draw[black,fill=black] (-.5,1.5) circle (0.05);
			\node[left] at (-.5,1) {4};
			\draw[black,fill=black] (-.5,1) circle (0.05);
			\node[right] at (.5,.5) {5};
			\draw[black,fill=black] (.5,.5) circle (0.05);
			\node[left] at (-.5,0) {6};
			\draw[black,fill=black] (-.5,0) circle (0.05);
			\draw[thick, black] (-.5,1.5) -- (-.125,1.5) -- (-.125, 1) -- (-.5,1);
			\draw[thick, black] (.5,.5) -- (0.125,.5) -- (0.125, 2.5) -- (.5,2.5);
	\end{tikzpicture}	
	\quad 
	\begin{tikzpicture}[baseline]
			\draw[thick,dashed] (-.5,2.75) -- (-.5,-.25) -- (.5,-.25) -- (.5,2.75);
			\node[right] at (.5,2.5) {1};
			\draw[black,fill=black] (.5,2.5) circle (0.05);
			\node[left] at (-.5,2) {2};
			\draw[black,fill=black] (-.5,2) circle (0.05);
			\node[left] at (-.5,1.5) {3};
			\draw[black,fill=black] (-.5,1.5) circle (0.05);
			\node[left] at (-.5,1) {4};
			\draw[black,fill=black] (-.5,1) circle (0.05);
			\node[right] at (.5,.5) {5};
			\draw[black,fill=black] (.5,.5) circle (0.05);
			\node[left] at (-.5,0) {6};
			\draw[black,fill=black] (-.5,0) circle (0.05);
			\draw[thick, black] (-.5,2) -- (-.125,2) -- (-.125, 1) -- (-.5,1);
			\draw[thick, black] (.5,.5) -- (0.125,.5) -- (0.125, 2.5) -- (.5,2.5);
	\end{tikzpicture}	
	\quad 
	\begin{tikzpicture}[baseline]
			\draw[thick,dashed] (-.5,2.75) -- (-.5,-.25) -- (.5,-.25) -- (.5,2.75);
			\node[right] at (.5,2.5) {1};
			\draw[black,fill=black] (.5,2.5) circle (0.05);
			\node[left] at (-.5,2) {2};
			\draw[black,fill=black] (-.5,2) circle (0.05);
			\node[left] at (-.5,1.5) {3};
			\draw[black,fill=black] (-.5,1.5) circle (0.05);
			\node[left] at (-.5,1) {4};
			\draw[black,fill=black] (-.5,1) circle (0.05);
			\node[right] at (.5,.5) {5};
			\draw[black,fill=black] (.5,.5) circle (0.05);
			\node[left] at (-.5,0) {6};
			\draw[black,fill=black] (-.5,0) circle (0.05);
			\draw[thick, black] (-.5,2) -- (-.125,2) -- (-.125, 1) -- (-.5,1);
			\draw[thick, black] (-.5,1.5) -- (-.125,1.5);
			\draw[thick, black] (.5,.5) -- (0.125,.5) -- (0.125, 2.5) -- (.5,2.5);
	\end{tikzpicture}	
\end{align*}
\end{exam}
\begin{rem}
For $\chi : \{1,\ldots, n\} \to \{\ell, r\}$, let $\overline{\chi} : \{1,\ldots, n\} \to \{\ell, r\}$ denote the constant function $\overline{\chi}(k) = \ell$.  If $\pi \in \bncm(\chi)$, then $\pi$ is naturally a non-crossing partition on $\{1,\ldots, n\}$ and corresponds to a unique element $\overline{\pi}$ of $BNC(\overline{\chi})$.  Let $\overline{\bncm(\chi)}$ to denote the image in $BNC(\overline{\chi})$ under this map.
\end{rem}

\begin{thm}
Let $(\A, \Phi)$ be a $B$-non-commutative probability space, let $A_1$ and $A_2$ be $B$-algebras contained in $\A$, and let $\Psi$ be the conditional expectation on $A_1 \ast_B A_2$ determined by $\Phi$ for which $A_1$ and $A_2$ are monotonically independent over $B$.  Viewing $(\A, \Phi)$ as a $B$-$B$-non-commutative probability space via Remark \ref{rem:B-ncps-from-B-B-ncps}, for all $\epsilon : \{1,\ldots, n\} \to \{1,2\}$ and for all $Z_k \in A_{\epsilon(k)}$, 
\begin{align}
\Psi(Z_1 \cdots Z_n) = \sum_{\overline{\pi} \in \overline{\bncm(\chi_\epsilon)}} \kappa_{\overline{\pi}}(Z_1, \ldots, Z_n) \label{eq:monotone-independent-via-LR-cumualnts}
\end{align}
where $\chi_\epsilon : \{1, \ldots, n\} \to \{\ell, r\}$ is defined by
\[
\chi_\epsilon(k) = \left\{
\begin{array}{ll}
\ell & \mbox{if } \epsilon(k) = 2  \\
r & \mbox{if } \epsilon(k) = 1
\end{array} \right. .
\]
\end{thm}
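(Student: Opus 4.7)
The plan is to exploit a clean combinatorial decomposition of $\overline{\bncm(\chi_\epsilon)}$ that is forced by the monotone condition, and then use the moment-cumulant formula in $(\A, \Phi)$ twice. Partition $\{1,\ldots,n\}$ into the set of $A_1$-positions $L := \{k : \epsilon(k) = 1\}$ and the maximal intervals of consecutive $A_2$-positions $R_1, \ldots, R_s$ (the $A_2$-runs). My first step is to establish a bijection
\[
\overline{\bncm(\chi_\epsilon)} \longleftrightarrow NC(L) \times \prod_{j=1}^s NC(R_j),
\]
sending $\overline\pi$ to the pair $(\overline\pi|_L, \{\overline\pi|_{R_j}\}_j)$. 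The key observation is that the monotone condition forbids any block from containing positions from two different runs: such a block would contain elements $m, q$ with $\chi_\epsilon(m) = \chi_\epsilon(q) = \ell$, and some $A_1$-position $p$ lying strictly between them (which necessarily exists because runs are separated by $A_1$-positions) satisfies $\chi_\epsilon(p) = r$, violating the monotone condition. Conversely, any pair $(\pi_L, \{\sigma_j\})$ recombines to an element of $\overline{\bncm(\chi_\epsilon)}$ because each $\sigma_j$ sits inside the interval $R_j$ and is automatically nested inside any surrounding $A_1$-block of $\pi_L$, preserving non-crossingness.

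With this bijection in hand, the sum $f_n(Z_1,\ldots,Z_n) := \sum_{\overline\pi \in \overline{\bncm(\chi_\epsilon)}} \kappa_{\overline\pi}(Z_1,\ldots,Z_n)$ factors. Since $\overline{\chi_\epsilon}$ is constantly $\ell$, the bi-free cumulant $\kappa_{\overline\pi}$ reduces to the operator-valued free cumulant $\kappa^\Phi_{\overline\pi}$ of $(\A, \Phi)$. By bi-multiplicativity of this cumulant function, the contribution of the decomposed pair $(\pi_L, \{\sigma_j\})$ splits as the $\pi_L$-cumulant on the $A_1$-positions with each $\sigma_j$-contribution nested and absorbed at the position where $R_j$ appears in the linear order. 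Summing independently over each $\sigma_j \in NC(R_j)$ and invoking the moment-cumulant formula in $(\A, \Phi)$ yields the $B$-valued factor $\sum_{\sigma_j} \kappa^\Phi_{\sigma_j}(Z_{R_j}) = \Phi(W_{R_j})$, where $W_{R_j}$ denotes the product of the $Z_k$'s for $k \in R_j$. A second application of the moment-cumulant formula to the remaining outer sum $\sum_{\pi_L \in NC(L)} \kappa^\Phi_{\pi_L}(\cdots)$ produces $\Phi$ of the inserted word, namely $\Phi\bigl(Z_{a_1} \Phi(W_{R_1}) Z_{a_2} \Phi(W_{R_2}) \cdots\bigr)$, which is a well-defined element of $A_1$ since $A_1$ is a $B$-algebra.

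On the other side, I compute $\Psi(Z_1 \cdots Z_n)$ by iterating the monotone factorization: each run $R_j$, viewed as the single element $W_{R_j} \in A_2$ sandwiched between $A_1$-elements (or touching a boundary), is a monotone local maximum, so $\Psi(\cdots W_{R_j} \cdots) = \Psi(\cdots \Phi(W_{R_j}) \cdots)$. After all runs are collapsed in this manner, the remaining word lies entirely in $A_1$, where $\Psi$ reduces to $\Phi|_{A_1}$, yielding exactly the expression produced by the $f_n$ computation above.

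The principal technical obstacle is the operator-valued bi-multiplicativity argument in the factorization step. For general $B$, one must carefully use the reduction rules for bi-multiplicative functions from \cite{CNS2014-2}*{Section 4.2} to ensure that each $B$-valued insertion $\Phi(W_{R_j})$ ends up in the correct position inside the outer $\pi_L$-cumulant, matching the position dictated by the monotone iteration. In the scalar case $B = \bC$ everything commutes and the proof is immediate, but in general the $B$-bimodule nesting structure on non-crossing partitions must be handled carefully, both when absorbing the inner-block $B$-values outward and when verifying that absorption produces precisely the $A_1$-word that appears after the monotone collapse on the $\Psi$-side.
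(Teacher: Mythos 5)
Your proposal is correct and follows essentially the same route as the paper: decompose each $\overline{\pi}\in\overline{\bncm(\chi_\epsilon)}$ into an outer non-crossing partition on the $A_1$-positions and independent non-crossing partitions on the $A_2$-runs (each run being a $\overline{\chi_\epsilon}$-interval), sum the inner partitions via the moment--cumulant formula to insert the $B$-values $\Phi(W_{R_j})$, sum the outer partitions to recover a full expectation, and match this with the iterated monotone collapse defining $\Psi$. Your explicit bijection $\overline{\bncm(\chi_\epsilon)}\leftrightarrow NC(L)\times\prod_j NC(R_j)$ simply makes precise what the paper compresses into ``the properties of bi-multiplicative functions imply,'' and the operator-valued insertion bookkeeping you flag is exactly the step the paper likewise leaves to bi-multiplicativity.
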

\begin{proof}
Fix $\epsilon : \{1,\ldots, n\} \to \{1,2\}$ and $Z_k \in A_{\epsilon(k)}$.  Let $W = \epsilon^{-1}(\{1\}) = \{q_1 < q_2 < \cdots < q_m\}$, $q_0 = 0$, $q_{m+1} = n+1$, and $V_k = \{q_{k-1} + 1,\ldots, q_{k} - 1\}$ for all $k \in \{1,\ldots, m+1\}$.   Since each $V_k$ is a $\overline{\chi_\epsilon}$-interval, if $\chi_0 : \{1,\ldots, m\} \to \{\ell, r\}$ is the constant map $\chi_0(k) = \ell$, then the properties of bi-multiplicative functions imply that
\begin{align*}
& \sum_{\overline{\pi} \in \overline{\bncm(\chi_\epsilon)}} \kappa_{\overline{\pi}}(Z_1, \ldots, Z_n) \\
&= \sum_{\sigma \in BNC(\chi_0)}  E(Z_{q_{0} + 1}\cdots Z_{q_{1} - 1}) \kappa_{\sigma }(Z_{q_1} L_{E(Z_{q_{1} + 1}\cdots Z_{q_{2} - 1})}, \ldots, Z_{q_m}L_{E(Z_{q_{m} + 1}\cdots Z_{q_{m+1} - 1})} ) \\
&=   E(Z_{q_{0} + 1}\cdots Z_{q_{1} - 1}) \E_{1_{\chi_0}}\left(Z_{q_1} L_{E(Z_{q_{1} + 1}\cdots Z_{q_{2} - 1})}, \ldots, Z_{q_m}L_{E(Z_{q_{m} + 1}\cdots Z_{q_{m+1} - 1})} \right) \\
&= E\left(L_{E(Z_{q_{0} + 1}\cdots Z_{q_{1} - 1})}Z_{q_1} L_{E(Z_{q_{1} + 1}\cdots Z_{q_{2} - 1})}Z_{q_2} \cdots Z_{q_m}L_{E(Z_{q_{m} + 1}\cdots Z_{q_{m+1} - 1})}  \right) \\
&=\Psi(Z_1 \cdots Z_n). \qedhere
\end{align*}
\end{proof}

\section{Bi-Freeness when Tensoring with $M_n(\mathbb{C})$}
\label{sec:Tensoring}

This section demonstrates how matrices of bi-freely independent algebras over $B$ are bi-freely independent over $M_n(B)$.

\subsection{Free Independence over $\boldsymbol{M_n(B)}$}

Let $(\A, \Phi)$ be a $B$-non-commutative probability space.  It is elementary to show that if $\Phi_n : M_n(\A) \to M_n(B)$ is the linear map defined by
\[
\Phi_n([Z_{i,j}]) = [\Phi(Z_{i,j})]
\]
for all $[Z_{i,j}] \in M_n(\A)$, then $(M_n(\A), \Phi_n)$ is a $M_n(B)$-non-commutative probability space.  Furthermore, it is well-known that if $\{A_k\}_{k \in K}$ are unital algebras of $\A$ containing $B$ that are freely independent over $B$, then $\{M_n(A_k)\}_{k \in K}$ are freely independent algebras over $M_n(B)$ in $(M_n(\A), \Phi_n)$.   A proof of this result can easily be obtained using the fact that free independence is equivalent to the moment of any alternating, centred product being zero; a characterization that is non-existent for bi-free independence.

\subsection{Matrices of $\boldsymbol{B}$-$\boldsymbol{B}$-Non-Commutative Probability Spaces}

To proceed with the bi-free analogue of the above result, we need to understand how to interpret $(M_n(C), M_n(D))$ for given a pair of $B$-faces $(C, D)$.
\begin{rem}
Given a $B$-$B$-non-commutative probability space $(\A, E_\A, \epsilon)$, the main issue with trying to make $M_n(\A)$ an $M_n(B)$-$M_n(B)$-non-commutative probability space with the expectation
\[
E_{M_n(\A)}[Z_{i,j}] = [E_\A(Z_{i,j})]
\]
is the construction of a unital homomorphism $\epsilon_n : M_n(B) \otimes M_n(B)^{\mathrm{op}} \to M_n(\A)$.  In the case that $B = \bC$, the range of $\epsilon_n$ would need to include $M_n(\bC) \subseteq M_n(\A)$ which would imply the left and right algebras $M_n(\A)_\ell$ and $M_n(\A)_r$ are trivial (i.e. $\A I_n$) and the discussion of bi-free independence moot.

The main issue is that $M_n(\A)$ does not correctly distinguish left and right operators.  In the scalar setting, with a non-commutative probability space $(\A, \varphi)$, any element of $\A$ is permitted to be a left or a right operator, yet, to consider operator-valued bi-free independence, only certain operators are allowed to be left or right operators.
\end{rem}

The procedure described below takes a $B$-$B$-non-commutative probability space $(\A, E_\A, \epsilon)$ and constructs an associated $M_n(B)$-$M_n(B)$-non-commutative probability space for which $M_n(\A_\ell)$ and $M_n(\A_r)$ identify with left and right operators respectively.
\begin{cons}
\label{cons:Mn-of-a-B-B-ncps}
Let $(\A, E_\A, \varepsilon)$ be a $B$-$B$-non-commutative probability space.  Recall Theorem \ref{thm:representing-bb-ncps} implies there exists a $B$-$B$-bimodule with specified $B$-vector state $(\X, \mathring{\X}, p)$ so that we may view $\A \subseteq \L(\X)$ and
\[
E_\A(Z) = p(Z (1_B))
\]
for all $Z \in \A$.  

To make $M_n(\X)$ an $M_n(B)$-$M_n(B)$-bimodule, for $[b_{i,j}] \in M_n(B)$ and $[\xi_{i,j}] \in M_n(\X)$, the left and right actions of $[b_{i,j}]$ on $[\xi_{i,j}]$ are defined as
\begin{align*}
L_{[b_{i,j}]}([\xi_{i,j}]) &= [b_{i,j}] \cdot [\xi_{i,j}] = \left[ \sum^n_{k=1} b_{i,k} \cdot \xi_{k,j} \right]      = \left[ \sum^n_{k=1} L_{b_{i,k}} (\xi_{k,j}) \right] \text{ and} \\
R_{[b_{i,j}]}([\xi_{i,j}]) &= [\xi_{i,j}] \cdot [b_{i,j}] = \left[ \sum^n_{k=1} \xi_{i,k} \cdot b_{k,j} \right] = \left[ \sum^n_{k=1} R_{b_{k,j}} (\xi_{i,k}) \right].
\end{align*}
Simple computations verify that these actions indeed make $M_n(\X)$ an $M_n(B)$-$M_n(B)$-bimodule (i.e. $L_{[b_{i,j}]}$ and $R_{[b'_{i,j}]}$ commute, $[b_{i,j}] \to L_{[b_{i,j}]}$ is a homomorphism, and $[b_{i,j}] \to R_{[b_{i,j}]}$ is an anti-homomorphism).  Furthermore, since
\[
M_n(\X) = M_n(B) \oplus M_n(\mathring{\X})
\]
as direct sum of $M_n(B)$-$M_n(B)$-bimodules, $(M_n(\X), M_n(\mathring{\X}), p_n)$ is an $M_n(B)$-$M_n(B)$-bimodule with specified $M_n(B)$-vector state where $p_n : M_n(\X) \to M_n(B)$ is defined by
\[
p_n([\xi_{i,j}]) = [p(\xi_{i,j})].
\]
Therefore $\L(M_n(\X))$ is an $M_n(B)$-$M_n(B)$-non-commutative probability space with conditional expectation $E_{\L(M_n(\X))} : \L(M_n(\X)) \to M_n(B)$ defined for all $Z \in \L(M_n(\X))$ by
\[
E_{\L(M_n(\X))}(Z) = p_n(Z (I_{n,B}))
\]
where $I_{n,B} \in M_n(B)$ is the unit of $M_n(B)$.  

Given a matrix $[Z_{i,j}] \in M_n(\A_\ell)$, we may view $[Z_{i,j}] \in \L(M_n(\X))$ by defining
\[
[Z_{i,j}]( [\xi_{i,j}]) = \left[  \sum^n_{k=1} Z_{i,k} (\xi_{k,j})   \right]
\]
for all $[\xi_{i,j}] \in M_n(\X)$.  Note $[L_{b_{i,j}}] = L_{[b_{i,j}]}$ and $M_n(\A_\ell)$ commutes with $\{R_{[b_{i,j}]} \, \mid \, [b_{i,j}] \in M_n(B)\}$ under this representation.  Thus we may view
\[
\{L_{[b_{i,j}]} \, \mid \, [b_{i,j}] \in M_n(B)\} \subseteq M_n(\A_\ell) \subseteq \L(M_n(\X))_\ell.
\]
Moreover, note if $[Z_{i,j}] \in M_n(\A_\ell)$, then, under this identification,
\[
E_{\L(M_n(\X))}([Z_{i,j}]) = p_n([Z_{i,j} (1_B)]) = [p(Z_{i,j} (1_B))] = [E_\A(Z_{i,j})].
\]

Similarly, given a matrix $[Z_{i,j}] \in M_n(\A_r^{\mathrm{op}})^{\mathrm{op}}$, we may view $[Z_{i,j}] \in \L(M_n(\X))$ by defining
\[
[Z_{i,j}]( [\xi_{i,j}]) = \left[  \sum^n_{k=1} Z_{k,j} (\xi_{i,k})   \right]
\]
for all $[\xi_{i,j}] \in M_n(\X)$.  Note $[R_{b_{i,j}}] = R_{[b_{i,j}]}$ and $M_n(\A_r^{\mathrm{op}})^{\mathrm{op}}$ commutes with $\{L_{[b_{i,j}]} \, \mid \, [b_{i,j}] \in M_n(B)\}$ under this representation.  Thus we may view
\[
\{R_{[b_{i,j}]} \, \mid \, [b_{i,j}] \in M_n(B)\} \subseteq M_n(\A_r^{\mathrm{op}})^{\mathrm{op}} \subseteq \L(M_n(\X))_r.
\]
Moreover, note if $[Z_{i,j}] \in M_n(\A_r^{\mathrm{op}})^{\mathrm{op}}$, then, under this identification,
\[
E_{\L(M_n(\X))}([Z_{i,j}]) = p_n([Z_{i,j} (1_B)]) = [p(Z_{i,j} (1_B))] = [E_\A(Z_{i,j})].
\]

Hence, given pairs of $B$-faces $\{(C_k, D_k)\}_{k \in K}$ in a $B$-$B$-non-commutative probability space $\A$, we can consider the pairs of $M_n(B)$-faces $\{(M_n(C_k), M_n(D_k^{\mathrm{op}})^{\mathrm{op}})\}_{k \in K}$ inside the $M_n(B)$-$M_n(B)$-non-commutative probability space $\L(M_n(\X))$.
\end{cons}
\begin{rem}
One may be slightly concerned by the necessity of using $\X$.  However, Theorem \ref{thm:representing-bb-ncps} constructs $\X$ as a quotient of $\A$ and $\X$ is directly associated to $\A$.  In fact, in the case $B = \bC$, $\X = \A$ in which case we are considering the $M_n(\bC)$-$M_n(\bC)$-non-commutative probability space $\L(M_n(\A))$, elements of $M_n(\A)$ acting as left operators on $M_n(\A)$ by left matrix multiplication, and elements of $M_n(\A)$ acting as right operators on $M_n(\A)$ by right multiplication coupled with the opposite action of $\A$.
\end{rem}
\begin{rem}
One can verify via Remark \ref{rem:B-ncps-from-B-B-ncps} that Construction \ref{cons:Mn-of-a-B-B-ncps} is an alternate way of viewing $M_n(\A)$ when $\A$ is a $B$-non-commutative probability space.
\end{rem}

\subsection{Bi-Free Independence over $\boldsymbol{M_n(B)}$}

Using Construction \ref{cons:Mn-of-a-B-B-ncps}, we may now state the main result of this section.
\begin{thm}
\label{thm:bi-free-preserved-when-tensored-with-Mn}
Let $(\A, E, \varepsilon)$ be a $B$-$B$-non-commutative probability space and let $\{(C_k, D_k)\}_{k \in K}$ be bi-free pairs of $B$-faces of $\A$.  Then the pairs of $M_n(B)$-faces $\{(M_n(C_k), M_n(D_k^{\mathrm{op}})^{\mathrm{op}})\}_{k \in K}$ are bi-freely independent over $M_n(B)$.
\end{thm}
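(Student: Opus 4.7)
The plan is to verify bi-freeness over $M_n(B)$ using the cumulant characterization of Theorem \ref{thm:bifree-classifying-theorem}: it suffices to show that for any non-constant $\epsilon : \{1,\ldots,m\} \to K$, any $\chi : \{1,\ldots,m\} \to \{\ell,r\}$, and any matrix operators $\vec{Z}^{(k)} \in M_n(C_{\epsilon(k)})$ (when $\chi(k)=\ell$) or $\vec{Z}^{(k)} \in M_n(D_{\epsilon(k)}^{\mathrm{op}})^{\mathrm{op}}$ (when $\chi(k)=r$), the $M_n(B)$-valued bi-free cumulant $\kappa^{M_n(B)}_{1_\chi}(\vec{Z}^{(1)},\ldots,\vec{Z}^{(m)})$ vanishes. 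That the pairs $(M_n(C_k), M_n(D_k^{\mathrm{op}})^{\mathrm{op}})$ actually live inside the left/right algebras of $\L(M_n(\X))$ has already been checked in Construction \ref{cons:Mn-of-a-B-B-ncps}, so only the cumulant vanishing remains.

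The key reduction is an entry-wise expansion: for every $\pi \in BNC(\chi)$,
\[
\left(\E^{M_n(B)}_\pi(\vec{Z}^{(1)},\ldots,\vec{Z}^{(m)})\right)_{i_0, j_m} \;=\; \sum_{\vec{\alpha},\vec{\beta}} \E^{B}_\pi\!\left(Z^{(1)}_{\alpha_1,\beta_1},\ldots,Z^{(m)}_{\alpha_m,\beta_m}\right),
\]
where the sum ranges over index tuples admissible with respect to $\chi$ (the indices chain together by column-to-row matching for successive left operators and row-to-column matching for successive right operators, anchored at $i_0$ and $j_m$). For $\pi = 1_\chi$ this follows from unwinding the explicit formulas for the left and right matrix actions from Construction \ref{cons:Mn-of-a-B-B-ncps}, applying the product to $I_{n,B}$, and reading off the $(i_0, j_m)$-entry of $p_n$ of the result. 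For general $\pi$, I would induct on the number of blocks using the recursive Definition \ref{defn:recursive-definition-of-E-pi}, exploiting the compatibility $L^{M_n(B)}_{[b_{ij}]} = [L^{B}_{b_{ij}}]$ and $R^{M_n(B)}_{[b_{ij}]} = [R^{B}_{b_{ij}}]$, so that collapsing a block $V$ of $\pi$ via an insertion of $L_{\E_{\pi|_V}}$ or $R_{\E_{\pi|_V}}$ at the matrix level factors exactly as the entry-wise collapse already recognized by the $B$-valued recursion.

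Granted the moment formula, Möbius inversion inside $BNC(\chi)$ is linear and acts block-diagonally across the index decomposition, which transfers the entry-wise expansion to cumulants:
\[
\left(\kappa^{M_n(B)}_{1_\chi}(\vec{Z}^{(1)},\ldots,\vec{Z}^{(m)})\right)_{i_0,j_m} \;=\; \sum_{\vec{\alpha},\vec{\beta}} \kappa^{B}_{1_\chi}\!\left(Z^{(1)}_{\alpha_1,\beta_1},\ldots,Z^{(m)}_{\alpha_m,\beta_m}\right).
\]
For each admissible tuple, $Z^{(k)}_{\alpha_k,\beta_k}$ lies in $C_{\epsilon(k)}$ or $D_{\epsilon(k)}$; since $\epsilon$ is non-constant and $\{(C_k,D_k)\}_{k\in K}$ is bi-free over $B$, every summand vanishes by Theorem \ref{thm:bifree-classifying-theorem}, and the desired conclusion follows.

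The main obstacle will be the careful verification of the entry-wise expansion for arbitrary $\pi \in BNC(\chi)$, not just $\pi = 1_\chi$. The recursive definition of $\E_\pi$ repeatedly removes an innermost block and reinserts its value as an $L_b$ or $R_b$ on a neighbouring position, and one must confirm that, at the matrix level, the block-expectation is the matrix of block-expectations of entries with the indices of the removed block correctly summed (and that no residual index mismatch is produced where a block is adjacent to a spine of opposite colour). The delicate case is a right-operator block whose minimum is adjacent to a left spine (and the mirror situation), where the opposite multiplication in $M_n(\A_r^{\mathrm{op}})^{\mathrm{op}}$ meets the standard multiplication in $M_n(\A_\ell)$; however, the bi-multiplicativity axioms and the fact that $L_{[b_{ij}]}$ and $R_{[b_{ij}]}$ are defined precisely so as to act on entries in the expected patterns make the bookkeeping tractable.
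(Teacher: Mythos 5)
Your proposal is correct and follows essentially the same route as the paper: your entry-wise expansion of $\E_\pi$, with admissibility governed by $\chi$ alone (independent of $\pi$), is precisely the paper's Lemma \ref{lem:tensor-technical-lemma} applied to matrix units $Z\otimes F_{i,j}$ and extended by multilinearity, and it is proved by the same induction on the number of blocks via the recursive definition of $\E_\pi$ and bi-multiplicativity. The only cosmetic difference is that you conclude via vanishing of mixed cumulants while the paper verifies the moment formula (\ref{eq:universal-polys}); these are equivalent by Theorem \ref{thm:bifree-classifying-theorem}.
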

To proceed with the proof of Theorem \ref{thm:bi-free-preserved-when-tensored-with-Mn}, let $\{F_{i,j}\}_{i,j=1}^n$ denote the canonical matrix units of $M_n(\bC)$ (so $F_{i,k}F_{m,j} = \delta_{k,m} F_{i,j}$).  In addition, $Z \otimes F_{i,j}$ represents the $n$ by $n$ matrix with $Z$ in the $(i,j)^{\mathrm{th}}$ position and zeros elsewhere and $Z \otimes 0 = 0$.  The following technical lemma will enable the proof of Theorem \ref{thm:bi-free-preserved-when-tensored-with-Mn}.
\begin{lem}
\label{lem:tensor-technical-lemma}
Let $(\A, E, \varepsilon)$ be a $B$-$B$-non-commutative probability space and let $\L(M_n(\X))$ be as described in Construction \ref{cons:Mn-of-a-B-B-ncps}.   For all $\chi : \{1,\ldots, q\} \to \{\ell, r\}$, for all
\[
Z_k \in \left\{
\begin{array}{ll}
\A_\ell & \mbox{if } \chi(k) = \ell  \\
\A_r & \mbox{if } \chi(k) = r
\end{array} \right.,
\]
for all $\{i_k\}^q_{k=1}, \{j_k\}^q_{k=1} \subseteq \{1,\ldots, n\}$,  and for all $\pi \in BNC(\chi)$, 
\[
\E_{\pi}(Z_1 \otimes F_{i_1, j_1}, \ldots, Z_q \otimes F_{i_q, j_q}) = \E_{\pi}(Z_1, \ldots, Z_q) \otimes F_\chi((i_1, \ldots, i_q), (j_1, \ldots, j_q))
\]
where the operator-valued bi-free moment functions are computed in the appropriate spaces and
\[
F_\chi((i_1, \ldots, i_q), (j_1, \ldots, j_q)) = F_{i_{s_\chi(1)}, j_{s_\chi(1)}} \cdots F_{i_{s_\chi(q)}, j_{s_\chi(q)}} \in M_n(\bC).
\]
\end{lem}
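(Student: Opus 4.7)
The plan is to establish the identity by induction on the number of blocks of $\pi$, reducing the inductive step to the base case $\pi = 1_\chi$ via the recursive Definition \ref{defn:recursive-definition-of-E-pi}.

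For the base case, I would unfold
\[
\E_{1_\chi}(Z_1 \otimes F_{i_1, j_1}, \ldots, Z_q \otimes F_{i_q, j_q}) = p_n\!\Bigl(\Bigl(\prod_{k=1}^q (Z_k \otimes F_{i_k, j_k})\Bigr)(I_{n,B})\Bigr)
\]
and compute the right-hand side inductively in $q$. Starting from $(Z_q \otimes F_{i_q, j_q})(I_{n,B}) = Z_q(1_B) \otimes F_{i_q, j_q}$, I prepend one factor at a time using the explicit left and right actions on $M_n(\X)$ from Construction \ref{cons:Mn-of-a-B-B-ncps}. A left prepending ($\chi(k) = \ell$) composes $Z_k$ on the left of the running operator and attaches $F_{i_k, j_k}$ on the left of the running matrix-unit product, while a right prepending ($\chi(k) = r$), by virtue of the $M_n(\A_r^{\mathrm{op}})^{\mathrm{op}}$-reversal in Construction \ref{cons:Mn-of-a-B-B-ncps}, still composes $Z_k$ on the left but attaches $F_{i_k, j_k}$ on the right. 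After all $q$ steps, the operator coefficient is $Z_1 Z_2 \cdots Z_q(1_B)$ with $p$-image equal to $E(Z_1\cdots Z_q) = \E_{1_\chi}(Z_1,\ldots,Z_q)$, and the surviving matrix-unit product reads out the left-position matrix units in increasing order of position followed by the right-position matrix units in decreasing order, which is precisely $F_\chi((i_1,\ldots,i_q),(j_1,\ldots,j_q))$ by the definition of $s_\chi$.

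For the inductive step, I would take $V$ to be the block of $\pi$ closest to the bottom of its bi-non-crossing diagram and apply the relevant case of Definition \ref{defn:recursive-definition-of-E-pi}. Since $\pi|_V = 1_{\chi|_V}$, the base case gives
\[
\E_{\pi|_V}\!\bigl((Z_1 \otimes F_{i_1,j_1}, \ldots, Z_q \otimes F_{i_q,j_q})|_V\bigr) = b \otimes F_V
\]
with $b = E_{\pi|_V}((Z_1,\ldots,Z_q)|_V)$ and $F_V = F_{\chi|_V}((i_1,\ldots,i_q)|_V,(j_1,\ldots,j_q)|_V)$ a single matrix unit (possibly zero). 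The inserted operator $L_{b \otimes F_V}$, respectively $R_{b \otimes F_V}$, in $\L(M_n(\X))$ then composes with the adjacent $Z_k \otimes F_{i_k, j_k}$, producing an entry of the form $(Z_k L_b) \otimes (F_{i_k,j_k} F_V)$, $(L_b Z_k) \otimes (F_V F_{i_k,j_k})$, or the analogous $R_b$ variants, matching the four scenarios of Definition \ref{defn:recursive-definition-of-E-pi}. Invoking the inductive hypothesis on $\pi|_{V^c}$, the scalar factor becomes $E_\pi(Z_1,\ldots,Z_q)$ exactly by the recursive definition, and the matrix-unit factor becomes $F_{\chi|_{V^c}}$ with the $F_V$-factor spliced in at the position dictated by the reduction rule.

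The final and most delicate task is verifying that this spliced matrix-unit product coincides with $F_\chi((i_1,\ldots,i_q),(j_1,\ldots,j_q))$. Concretely, I need to check that the $s_\chi$-ordering is compatible with block extraction, i.e.\ that the restrictions $s_{\chi|_V}$ and $s_{\chi|_{V^c}}$ shuffle back together into $s_\chi$ along the ordering $\prec_\chi$; once this is done, the insertion rule for $F_V$ in each of the four sub-cases of Definition \ref{defn:recursive-definition-of-E-pi} lines up with the corresponding block of $F_\chi$. The hard part will be precisely this bookkeeping, but it should reduce to the observation that $\prec_\chi$ restricts correctly to both $V$ and $V^c$ as $\chi$-intervals after a reduction step, so the induction closes.
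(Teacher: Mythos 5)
Your proposal is correct and follows essentially the same route as the paper's proof: induction on the number of blocks of $\pi$, with the base case $\pi = 1_\chi$ computed from the explicit left/right actions on $M_n(\X)$ (left factors attaching matrix units on the left, right factors on the right, yielding $F_\chi$ via the $s_\chi$-ordering), and the inductive step peeling off a $\chi$-interval block $W$, writing $\E_{\pi|_W} = b \otimes G$, and splicing $b \otimes G$ back in via the reduction properties. The paper phrases the splice through bi-multiplicativity with an arbitrary $\chi$-interval block and two cases on whether something $\prec_\chi$-precedes $W$, but this is the same bookkeeping you describe, and the matrix-unit compatibility you flag as the delicate point is exactly the observation (immediate from $W$ being a $\prec_\chi$-interval) that the paper also relies on.
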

\begin{proof}
The proof proceeds by induction on the number of blocks of $\pi$.  For the base case, suppose $\pi$ has precisely one block; that is, $\pi = 1_\chi$.  Using the operations in Construction \ref{cons:Mn-of-a-B-B-ncps}, it is elementary to verify that
\[
(Z_1 \otimes F_{i_1, j_1}) \cdots (Z_q \otimes F_{i_q, j_q}) I_{n,B} = (Z_1 \cdots Z_q (1_B)) \otimes F_\chi((i_1, \ldots, i_q), (j_1, \ldots, j_q))
\]
(where $Z_k \otimes F_{i_k, j_k}$ acts as an element of $\mathcal{M}_n(\A_{\chi(k)})$) so that
\begin{align*}
\E_{1_\chi}(Z_1 \otimes F_{i_1, j_1}, \ldots, Z_q \otimes F_{i_q, j_q}) &= E_{\L(M_n(\X))}((Z_1 \otimes F_{i_1, j_1}) \cdots (Z_q \otimes F_{i_q, j_q})) \\
&= p_n((Z_1 \otimes F_{i_1, j_1}) \cdots (Z_q \otimes F_{i_q, j_q}) I_{n,B})\\
&= p(Z_1 \cdots Z_q (1_B)) \otimes F_\chi((i_1, \ldots, i_q), (j_1, \ldots, j_q)) \\
&= E_\A(Z_1 \cdots Z_q) \otimes F_\chi((i_1, \ldots, i_q), (j_1, \ldots, j_q)) \\
&= \E_{1_\chi}(Z_1, \ldots, Z_q) \otimes F_\chi((i_1, \ldots, i_q), (j_1, \ldots, j_q)).
\end{align*}

To proceed inductively, fix $\pi \in BNC(\chi)$ and suppose the result holds for all bi-non-crossing partitions with fewer blocks than $\pi$.  Let $W$ be any block of $\pi$ that is a $\chi$-interval.  By the base case
\begin{align*}
\E_{\pi|_{W}}(&(Z_1 \otimes F_{i_1, j_1}, \ldots, Z_q \otimes F_{i_q, j_q})|_{W}) \\
&= \E_{\pi|_{W}}((Z_1, \ldots, Z_q)|_{W}) \otimes F_{\chi|_W}((i_1, \ldots, i_q)|_W, (j_1, \ldots, j_q)|_W).
\end{align*}

For simplicity of notation, let 
\begin{align*}
b = \E_{\pi|_{W}}((Z_1, \ldots, Z_q)|_{W}) \qqand G = F_{\chi|_W}((i_1, \ldots, i_q)|_W, (j_1, \ldots, j_q)|_W).
\end{align*}
Further, let
\[
p = \max_{\prec_\chi}\left(\left\{k \in \{1,\ldots, q\} \, \mid \, k \prec_\chi \min_{\prec_\chi}(W)\right\}\right).
\]
The proof is divided into two cases based on $p$.

\underline{\textit{Case 1: $p \neq -\infty$.}}   
Notice if $F_{i_p, j_p}G = \delta F_{i,j}$ where $\delta \in \{0,1\}$, then it is clear by the definition of $\prec_\chi$ that
\begin{align*}
\delta F_{\chi|_{W^c}}((i_1, \ldots, i_{p-1}, & i, i_{p+1}, \ldots, i_q)|_{W^c}, (j_1, \ldots, j_{p-1}, j, j_{p+1}, \ldots, j_q)|_{W^c}) \\
&  = F_\chi((i_1, \ldots, i_q), (j_1, \ldots, j_q)).
\end{align*}
We need to further divide the discussion into two cases. 

If $\chi(p) = \ell$, notice the actions in Construction \ref{cons:Mn-of-a-B-B-ncps} imply
\[
(Z_p \otimes F_{i_p, j_p}) L_{  b \otimes G } = Z_p L_b \otimes F_{i_p, j_p}G.
\]
Hence, by the properties of bi-multiplicative functions  and the induction hypothesis, 
\begin{align*}
\E_{\pi}&(Z_1 \otimes F_{i_1, j_1}, \ldots, Z_q \otimes F_{i_q, j_q}) \\
&= \E_{\pi|_{W^c}}((Z_1 \otimes F_{i_1, j_1}, \ldots,   (Z_p \otimes F_{i_p, j_p}) L_{  b \otimes G }, \ldots,   Z_q \otimes F_{i_q, j_q})|_{W^c})\\
&= \E_{\pi|_{W^c}}((Z_1 \otimes F_{i_1, j_1}, \ldots,   (Z_pL_b \otimes F_{i_p, j_p}G ), \ldots,   Z_q \otimes F_{i_q, j_q})|_{W^c})\\
&= \E_{\pi|_{W^c}}((Z_1, \ldots,   Z_p L_b, \ldots,   Z_q)|_{W^c}) \otimes F_\chi((i_1, \ldots, i_q), (j_1, \ldots, j_q)) \\
&= \E_{\pi}(Z_1, \ldots, Z_q) \otimes F_\chi((i_1, \ldots, i_q), (j_1, \ldots, j_q)).
\end{align*}
Otherwise, if $\chi(p) = r$, notice the actions in Construction \ref{cons:Mn-of-a-B-B-ncps} imply
\[
R_{  b \otimes G } (Z_p \otimes F_{i_p, j_p})  = R_bZ_p \otimes F_{i_p, j_p}G.
\]
Hence, by the properties of bi-multiplicative functions and the induction hypothesis, 
\begin{align*}
\E_{\pi}&(Z_1 \otimes F_{i_1, j_1}, \ldots, Z_q \otimes F_{i_q, j_q}) \\
&= \E_{\pi|_{W^c}}((Z_1 \otimes F_{i_1, j_1}, \ldots,   R_{  b \otimes G } (Z_p \otimes F_{i_p, j_p}), \ldots,   Z_q \otimes F_{i_q, j_q})|_{W^c})\\
&= \E_{\pi|_{W^c}}((Z_1 \otimes F_{i_1, j_1}, \ldots,   (R_bZ_p \otimes F_{i_p, j_p}G ), \ldots,   Z_q \otimes F_{i_q, j_q})|_{W^c})\\
&= \E_{\pi|_{W^c}}((Z_1, \ldots,  R_b Z_p, \ldots,   Z_q)|_{W^c}) \otimes F_\chi((i_1, \ldots, i_q), (j_1, \ldots, j_q)) \\
&= \E_{\pi}(Z_1, \ldots, Z_q) \otimes F_\chi((i_1, \ldots, i_q), (j_1, \ldots, j_q)).
\end{align*}

\underline{\textit{Case 2: $p = -\infty$.}}   It is clear by the definition of $\prec_\chi$ that
\[
GF_{\chi|_{W^c}}((i_1, \ldots, i_q)|_{W^c}, (j_1, \ldots, j_q)|_{W^c}) = F_\chi((i_1, \ldots, i_q), (j_1, \ldots, j_q)).
\]
Hence, by the properties of bi-multiplicative functions and the induction hypothesis, 
\begin{align*}
\E_{\pi}&(Z_1 \otimes F_{i_1, j_1}, \ldots, Z_q \otimes F_{i_q, j_q}) \\
&= (b \otimes G) \E_{\pi|_{W^c}}((Z_1 \otimes F_{i_1, j_1}, \ldots,   Z_q \otimes F_{i_q, j_q})|_{W^c})\\
&= (b \otimes G) \left(\E_{\pi|_{W^c}}((Z_1, \ldots, Z_q)|_{W^c}) \otimes F_{\chi|_W}((i_1, \ldots, i_q)|_{W^c}, (j_1, \ldots, j_q)|_{W^c})\right) \\
&= (b\E_{\pi|_{W^c}}((Z_1, \ldots, Z_q)|_{W^c}))  \otimes (GF_{\chi|_W}((i_1, \ldots, i_q)|_{W^c}, (j_1, \ldots, j_q)|_{W^c}))) \\
&= \E_{\pi}(Z_1, \ldots, Z_q) \otimes F_\chi((i_1, \ldots, i_q), (j_1, \ldots, j_q)).
\end{align*}

As all cases have been covered, the inductive step and thus proof are complete.
\end{proof}
\begin{proof}[Proof of Theorem \ref{thm:bi-free-preserved-when-tensored-with-Mn}]
To prove Theorem \ref{thm:bi-free-preserved-when-tensored-with-Mn} it suffices to verify equation (\ref{eq:universal-polys}) in Theorem \ref{thm:bifree-classifying-theorem}.  In addition, due to linearity, it suffices to verify equation (\ref{eq:universal-polys}) for elements of the form $Z \otimes F_{i,j}$ where $Z$ is an element of a $C_k$ or a $D_k$ and $i,j \in \{1,\ldots, n\}$ are arbitrary.

Let $\chi : \{1,\ldots, q\} \to \{\ell, r\}$, let $\epsilon : \{1, \ldots, q\} \to K$, let
\[
Z_k \in \left\{
\begin{array}{ll}
C_{\epsilon(k)} & \mbox{if } \chi(k) = \ell  \\
D_{\epsilon(k)}  & \mbox{if } \chi(k) = r
\end{array} \right.,
\]
and let $\{i_k\}^q_{k=1}, \{j_k\}^q_{k=1} \subseteq \{1,\ldots, n\}$.  Then, by Lemma \ref{lem:tensor-technical-lemma} and the fact that equation (\ref{eq:universal-polys}) of Theorem \ref{thm:bifree-classifying-theorem} holds for $\{(C_k, D_k)\}_{k \in K}$,
\begin{align*}
&E_{\L(M_n(\X))}((Z_1\otimes F_{i_1, j_1}) \cdots (Z_q \otimes F_{i_q, j_q})) \\
&= E_\A(Z_1 \cdots Z_q) \otimes F_\chi((i_1, \ldots, i_q), (j_1, \ldots, j_q)) \\
&= \sum_{\pi \in BNC(\chi)} \left[ \sum_{\substack{\sigma\in BNC(\chi)\\\pi\leq\sigma\leq\epsilon}}\mu_{BNC}(\pi, \sigma) \right] \mathcal{E}_{\pi}(Z_1,\ldots, Z_q)  \otimes F_\chi((i_1, \ldots, i_q), (j_1, \ldots, j_q))  \\
&= \sum_{\pi \in BNC(\chi)} \left[ \sum_{\substack{\sigma\in BNC(\chi)\\\pi\leq\sigma\leq\epsilon}}\mu_{BNC}(\pi, \sigma) \right] \E_{\pi}(Z_1\otimes F_{i_1, j_1}, \ldots, Z_q \otimes F_{i_q, j_q}).  \qedhere
\end{align*}
\end{proof}

\section{Partial Multivariate $R$-Transforms}
\label{sec:Transforms}

This section uses the combinatorial approach to bi-free probability to develop some partial $R$-transforms for pair of operators in the scalar setting.  Note all power series in this section are power series in commuting variables.

\subsection{Single Variable $\boldsymbol{R}$-Transforms}
\label{sec:Transform-preliminaries}

We begin by recalling some notation and standard results.  
\begin{defn}
Let $(\A, \varphi)$ be a non-commutative probability space and let $T \in \A$ be an arbitrary element.  For $n \in \mathbb{N}$, let $\kappa_n(T)$ denote the $n^{\text{th}}$ free cumulant of $T$; that is, in the notation of $(\ell, r)$-cumulants, $\kappa_n(T) = \kappa_{1_\chi}(T, T, \ldots, T)$ where $\chi : \{1,\ldots, n\} \to \{\ell, r\}$ is constant.

The \emph{Cauchy transform of $T$} is the power series
\[
G_T(z) = \varphi((z 1_\A - T)^{-1}) = \frac{1}{z} + \sum_{n\geq 1} \frac{\varphi(T^n)}{z^{n+1}},
\]
the \emph{$R$-transform of $T$} is the power series
\[
R_T(z) = \sum_{n\geq 0} \kappa_{n+1}(T) z^n,
\]
the \emph{moment series of $T$} is the power series
\[
M_T(z) = 1 + \sum_{n\geq 1} \varphi(T^n) z^n,
\]
and the \emph{cumulant series of $T$} is the power series
\[
C_T(z) = 1 + \sum_{n\geq 1} \kappa_n(T) z^n.
\]
\end{defn}
We recall the following relations between the above series from \cite{S1994}:
\begin{align}
G_T(z) &= \frac{1}{z}M_T\left(\frac{1}{z}\right)\\
C_T(z) &= zR_T(z) + 1 \label{eq:relating-cumulant-and-r-transform}\\
M_T(z) &= C_T(zM_T(z))  \label{eqn:moment-series-via-composition-with-cumulant-series}\\
C_T(z) &=M_T\left(\frac{z}{C_T(z)}\right) \label{eqn:cumulant-series-via-composition-with-moment-series}
\end{align}
Note the traditional relation $G_T\left(R_T(z) + \frac{1}{z}\right) = z$ in \cite{V1985} follows from these relations.  In addition, if $T$ and $S$ are freely independent, then $R_{T+S}(z) = R_T(z) + R_S(z)$.

\subsection{Partial $\boldsymbol{R}$-Transform for a Pair of Variables}
\label{sec:partial-R-transform-of-Voi}

This section re-derives the partial $R$-transform from \cite{V2013-2}*{Theorem 2.4}  via combinatorics.  To state \cite{V2013-2}*{Theorem 2.4} and to proceed with our proof, we make the following notation.
\begin{nota}
\label{not:notation-for-voiculescu-result}
Let $(\A, \varphi)$ be a non-commutative probability space and let $T, S \in \A$ be arbitrary elements.  We view $T$ as a left element and $S$ as a right element.  For $n, m \in \mathbb{N}\cup \{0\}$ with $n+m \neq 0$, let
\[
\kappa_{n, m}(T, S) = \kappa_{1_{\chi_{n,m}}}(Z_1, \ldots, Z_{n + m})
\]
where $\chi_{n,m} : \{1,\ldots, n+m\} \to \{\ell, r\}$,
\[
\chi_{n,m}(k)  = \left\{
\begin{array}{ll}
\ell & \mbox{if } k \leq n  \\
r & \mbox{if } k > n
\end{array} \right. ,
\qqand
Z_k  = \left\{
\begin{array}{ll}
T & \mbox{if } k \leq n  \\
S & \mbox{if } k > n
\end{array} \right..
\]

The \emph{two-variable Green's function} is the power series
\[
G_{T, S}(z,w) = \varphi((z1_\A - T)^{-1} (w 1_\A - S)^{-1}) = \frac{1}{zw} + \sum_{\substack{n,m\geq 0 \\ n+m \geq 1}} \frac{\varphi(T^nS^m)}{z^{n+1} w^{m+1}}.
\]
Further, let
\[
R_{T,S}(z,w) = \sum_{\substack{n,m\geq 0 \\ n+m \geq 1}} \kappa_{n, m}(T, S) z^n w^m.
\]
\end{nota}

The function $R_{T,S}(z,w)$ plays a similar role as the single variable $R$-transform when it comes to additive convolution.  Indeed, by Theorem \ref{thm:bifree-classifying-theorem}, if $(T_1, S_1)$ and $(T_2, S_2)$ are bi-free two-faced pairs, then 
\[
R_{T_1+T_2, S_1+S_2}(z,w) = R_{T_1, S_1}(z,w) + R_{T_2, S_2}(z,w).
\]

Using the above notation, the following result was proved in \cite{V2013-2} using analytic techniques.
\begin{thm}[\cite{V2013-2}*{Theorem 2.4}]
\label{thm:Voi-left-right-separated-R-Formula}
If $(T, S)$ is a two-faced pair in a Banach algebra non-commutative probability space, then
\begin{align}
R_{T, S}(z,w) = 1 + zR_T(z) + wR_S(w) - \frac{zw}{G_{T, S}\left(R_T(z) + \frac{1}{z}, R_S(w) + \frac{1}{w}\right)} \label{eq:Voi-partial-R-transform}
\end{align}
as holomorphic functions near $(0, 0) \in \bC^2$.
\end{thm}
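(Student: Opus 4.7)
The plan is to reduce the identity to a combinatorial equation between the mixed $(\ell, r)$-cumulants and a two-variable moment series, and then prove that equation by decomposing non-crossing partitions according to their ``mixed'' blocks, in the spirit of Speicher's combinatorial derivation of the single-variable $R$-transform formula.

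First I would introduce $M_{T, S}(u, v) := \sum_{n, m \geq 0} \varphi(T^{n} S^{m})\, u^{n} v^{m}$, so that $G_{T, S}(z, w) = \tfrac{1}{zw} M_{T, S}(\tfrac{1}{z}, \tfrac{1}{w})$, and perform the change of variables $u = z/C_{T}(z)$, $v = w/C_{S}(w)$. By equation (\ref{eqn:cumulant-series-via-composition-with-moment-series}) this forces $C_{T}(z) = M_{T}(u)$, $z = u M_{T}(u)$, and symmetrically for $S$; together with (\ref{eq:relating-cumulant-and-r-transform}) one obtains $R_{T}(z) + 1/z = 1/u$, $R_{S}(w) + 1/w = 1/v$, $1 + zR_{T}(z) + wR_{S}(w) = M_{T}(u) + M_{S}(v) - 1$, and $\tfrac{zw}{G_{T, S}(1/u, 1/v)} = \tfrac{M_{T}(u) M_{S}(v)}{M_{T, S}(u, v)}$. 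Splitting $R_{T, S}(z, w) = (C_{T}(z) - 1) + (C_{S}(w) - 1) + R^{\mathrm{mix}}_{T, S}(z, w)$ with $R^{\mathrm{mix}}_{T, S}(z, w) := \sum_{n, m \geq 1} \kappa_{n, m}(T, S)\, z^{n} w^{m}$, the theorem is then equivalent to the purely combinatorial ``mixed'' identity
\begin{equation}
\label{eq:plan-mixed-id}
M_{T, S}(u, v) - M_{T}(u) M_{S}(v) \,=\, M_{T, S}(u, v) \cdot R^{\mathrm{mix}}_{T, S}\bigl(u M_{T}(u),\, v M_{S}(v)\bigr).
\end{equation}

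Next I would establish (\ref{eq:plan-mixed-id}) combinatorially. M\"{o}bius inversion of Definition~\ref{def:kappa} together with the bijection $\pi \mapsto s_{\chi_{n, m}}^{-1} \cdot \pi$ between $BNC(\chi_{n, m})$ and the set $NC(n + m)$ of non-crossing partitions of $\{1, \ldots, n + m\}$, combined with the bi-multiplicativity of $\kappa_\pi$, yields
\[
\varphi(T^{n} S^{m}) \,=\, \sum_{\pi \in NC(n + m)}\, \prod_{V \in \pi} \kappa_{n_{V},\, m_{V}}(T, S),
\]
where $n_{V} := |V \cap \{1, \ldots, n\}|$ and $m_{V} := |V \cap \{n + 1, \ldots, n + m\}|$; the block cumulant equals $\kappa_{n_{V}, m_{V}}(T, S)$ because the induced $\chi_{n, m}|_{V}$ has all $\ell$'s preceding all $r$'s and is therefore of the type $\chi_{n_{V}, m_{V}}$. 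Call a block $V$ \emph{mixed} if $n_{V}, m_{V} \geq 1$. The key combinatorial observation is that any two mixed blocks of a non-crossing partition have nested convex hulls: both hulls straddle the divide between positions $n$ and $n + 1$, so non-nested hulls would force the crossing pattern $v_{1} < w_{1} < v_{2} < w_{2}$. Hence the mixed blocks form a chain under hull-containment and, when at least one exists, there is a unique \emph{outermost} mixed block; partitions with no mixed block factor as the independent product of a non-crossing partition of the $T$-positions and one of the $S$-positions, contributing exactly $M_{T}(u) M_{S}(v)$ to the generating function.

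Finally, for partitions with at least one mixed block I would stratify by the outermost mixed block $V = \{a_{1} < \cdots < a_{n'} < b_{1} < \cdots < b_{m'}\}$, which contributes the factor $\kappa_{n', m'}(T, S)$. The $n'$ pure-$T$ gaps (before $a_{1}$ and between consecutive $a_{i}$'s) each independently host an arbitrary non-crossing partition of $T$-positions and jointly contribute $M_{T}(u)^{n'}$; symmetrically the $m'$ pure-$S$ gaps contribute $M_{S}(v)^{m'}$; and the unique \emph{middle} gap $(a_{n'}, b_{1})$, which is the only gap that may still house further mixed blocks of the original partition, contributes exactly $M_{T, S}(u, v)$. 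Summing,
\[
M_{T, S}(u, v) - M_{T}(u) M_{S}(v) \,=\, M_{T, S}(u, v) \sum_{n', m' \geq 1} \kappa_{n', m'}(T, S) \bigl(u M_{T}(u)\bigr)^{n'} \bigl(v M_{S}(v)\bigr)^{m'},
\]
which is precisely (\ref{eq:plan-mixed-id}). The main obstacle is the structural fact that mixed blocks form a nested chain and that only the middle gap of the outermost mixed block may house further mixed blocks; once this is in hand, the identity follows from routine bookkeeping over gap-sizes.
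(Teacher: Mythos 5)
Your proposal is correct and follows essentially the same route as the paper: the paper proves the equivalent moment-series identity (Theorem \ref{thm:my-left-right-separated-R-Formula}) by stratifying the non-vertically-split bi-non-crossing partitions over the first mixed block $V_\pi$ (your ``outermost mixed block''), factoring the pure-$T$ and pure-$S$ gaps into powers of $M_T$ and $M_S$ and the middle gap into $M_{T,S}$, and then passes to equation (\ref{eq:Voi-partial-R-transform}) via the same substitution $z \mapsto z/C_T(z)$, $w \mapsto w/C_S(w)$. Your only cosmetic departures are performing the change of variables first and phrasing the combinatorics in $NC(n+m)$ via $s_{\chi_{n,m}}^{-1}$ rather than directly in $BNC(\chi_{n,m})$.
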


To use combinatorics to produce a proof of Theorem \ref{thm:Voi-left-right-separated-R-Formula}, we need the following analogues of the moment and cumulant series.

\begin{defn}
The \emph{left-then-right cumulant series of $(T, S)$} is the power series
\begin{align}
C_{T, S}(z, w) = 1 + \sum_{\substack{n,m\geq 0 \\ n+m \geq 1}} \kappa_{n, m}(T, S) z^n w^m = 1+ R_{T, S}(z,w). \label{eq:LR-cumulant-to-partial-R-transform}
\end{align}
The \emph{left-then-right moment series of $(T, S)$} is the power series
\[
M_{T, S}(z, w) = 1 + \sum_{\substack{n,m\geq 0 \\ n+m \geq 1}} \varphi(T^nS^m)z^n w^m.
\]
\end{defn}
It is easy to verify that
\begin{align}
G_{T, S}(z, w) = \frac{1}{zw} M_{T, S}\left(\frac{1}{z}, \frac{1}{w}\right). \label{eq:Green-to-LR-Moment-Series}
\end{align}
\begin{thm}
\label{thm:my-left-right-separated-R-Formula}
Let $(\A, \varphi)$ be a non-commutative probability space and let $T, S \in \A$ be arbitrary elements.  Then
\begin{align}
M_T(z) + M_S(w) = \frac{M_T(z)M_S(w)}{M_{T,S}(z,w)} + C_{T, S}(zM_T(z), wM_S(w)). \label{eq:my-left-right-separated-R-Formula}
\end{align}
\end{thm}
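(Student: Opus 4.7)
The plan is to reduce the theorem to a combinatorial identity for the mixed excess moments $\varphi(T^nS^m) - \varphi(T^n)\varphi(S^m)$ and then exploit a nesting property of mixed blocks in bi-non-crossing partitions. Dividing (\ref{eq:my-left-right-separated-R-Formula}) by $M_{T,S}(z,w)$, which is a unit of $\bC[[z,w]]$ since $M_{T,S}(0,0)=1$, and expanding $C_{T,S}(zM_T(z),wM_S(w))$ using $\kappa_{n,0}(T,S)=\kappa_n(T)$, $\kappa_{0,m}(T,S)=\kappa_m(S)$, together with the scalar moment--cumulant identity $M_T(z)=C_T(zM_T(z))$ from (\ref{eqn:moment-series-via-composition-with-cumulant-series}), the theorem is equivalent to
$$M_{T,S}(z,w) - M_T(z)M_S(w) \;=\; M_{T,S}(z,w)\sum_{p,q\geq 1}\kappa_{p,q}(T,S)(zM_T(z))^p(wM_S(w))^q.$$
The left-hand side is $\sum_{n,m\geq 1}(\varphi(T^nS^m)-\varphi(T^n)\varphi(S^m))z^nw^m$, so the goal is to match coefficients for $n,m\geq 1$.

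Applying Theorem \ref{thm:bifree-classifying-theorem} to $\varphi(T^nS^m)$ expresses it as a sum of $\kappa_\pi(T,\ldots,T,S,\ldots,S)$ over $\pi\in BNC(\chi_{n,m})$. By scalar bi-multiplicativity, a block $V$ contributes $\kappa_{|V|}(T)$ if purely left, $\kappa_{|V|}(S)$ if purely right, and $\kappa_{p,q}(T,S)$ if mixed with $p$ left and $q$ right nodes (since all left nodes precede all right nodes in the natural order, $\chi|_V$ is forced to be $\chi_{p,q}$). The partitions with no mixed block decouple into independent non-crossing partitions on $\{1,\ldots,n\}$ and $\{n+1,\ldots,n+m\}$, summing to $\varphi(T^n)\varphi(S^m)$, so only partitions with at least one mixed block contribute to the excess.

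The crux is the following structural lemma: in a bi-non-crossing partition on $\chi_{n,m}$, the mixed blocks are linearly ordered by $\prec_\chi$-nesting. Each mixed block has elements in both the ``left half'' and the ``right half'' of $\prec_\chi$, so two distinct mixed blocks cannot be $\prec_\chi$-disjoint, and non-crossingness in $\prec_\chi$ then forces one to be nested inside the other. Hence any $\pi$ with a mixed block has a unique outermost mixed block $V^*$ with left nodes $v_\ell^{(1)}<\cdots<v_\ell^{(p)}$ and right nodes $v_r^{(1)}<\cdots<v_r^{(q)}$, and the remaining blocks fall into five kinds of regions: purely left non-crossing partitions on $\{1,\ldots,v_\ell^{(1)}-1\}$ and on each $\{v_\ell^{(i)}+1,\ldots,v_\ell^{(i+1)}-1\}$ for $i<p$; purely right non-crossing partitions on $\{n+1,\ldots,v_r^{(1)}-1\}$ and on each $\{v_r^{(j)}+1,\ldots,v_r^{(j+1)}-1\}$ for $j<q$; and an arbitrary bi-non-crossing partition on the interior $\chi$-interval $\{v_\ell^{(p)}+1,\ldots,n\}\cup\{v_r^{(q)}+1,\ldots,n+m\}$.

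Summing cumulants over each region converts a left interval of length $a$ into $\varphi(T^a)$, a right interval of length $b$ into $\varphi(S^b)$, and the interior into $\varphi(T^{n-v_\ell^{(p)}}S^{n+m-v_r^{(q)}})$. Weighting by $z^nw^m$ and summing over $n,m\geq 1$, the $p$ independent left-spine sums consolidate to $(zM_T(z))^p$, the $q$ right-spine sums to $(wM_S(w))^q$, and the coupled interior sum to the single factor $M_{T,S}(z,w)$, producing precisely the displayed identity. The main obstacle I expect is the structural lemma together with a careful, case-free enumeration of the five gap regions so that the decomposition is bijective; once that is in place, the remaining generating-function manipulation is routine bookkeeping.
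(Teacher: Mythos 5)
Your proposal is correct and follows essentially the same route as the paper: both isolate the outermost mixed block (the paper's $V_\pi$, your $V^*$), factor the remaining blocks into the $p+q+1$ gap regions (pure-left gaps giving $\varphi(T^a)$, pure-right gaps giving $\varphi(S^b)$, and the interior $\chi$-interval giving a full mixed moment), and consolidate into $M_{T,S}(z,w)\sum_{p,q\ge 1}\kappa_{p,q}(T,S)(zM_T(z))^p(wM_S(w))^q$. The only cosmetic difference is that you perform the algebraic reduction to this identity up front, while the paper carries out the equivalent manipulations after establishing the coefficient formula for $\Theta_{n,m}$.
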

\begin{rem}
Note Theorem \ref{thm:Voi-left-right-separated-R-Formula} and Theorem \ref{thm:my-left-right-separated-R-Formula} are equivalent.  Indeed, replacing $z$ and $w$ with $\frac{z}{C_T(z)}$ and $\frac{w}{C_S(w)}$ respectively in equation (\ref{eq:my-left-right-separated-R-Formula}) produces
\[
C_T(z) + C_S(w) = \frac{C_T(z)C_S(w)}{M_{T,S}\left( \frac{z}{C_T(z)}, \frac{w}{C_S(w)}  \right)} + C_{T, S}(z,w)
\]
via equation (\ref{eqn:cumulant-series-via-composition-with-moment-series}).  Using equation (\ref{eq:Green-to-LR-Moment-Series}), 
\[
\frac{C_T(z)C_S(w)}{M_{T,S}\left( \frac{z}{C_T(z)}, \frac{w}{C_S(w)}  \right)} = \frac{zw}{\frac{z}{C_T(z)}\frac{w}{C_S(w)}M_{T,S}\left( \frac{z}{C_T(z)}, \frac{w}{C_S(w)}\right)} = \frac{zw}{G_{T,S}\left(\frac{C_T(z)}{z}, \frac{C_S(w)}{w} \right)}.
\]
Therefore, by equations (\ref{eq:relating-cumulant-and-r-transform}) and (\ref{eq:LR-cumulant-to-partial-R-transform}),
\begin{align*}
R_{T, S}(z,w) &= C_{T,S}(z,w) - 1\\
&= C_T(z) + C_S(w) - \frac{zw}{G_{T,S}\left(\frac{C_T(z)}{z}, \frac{C_S(w)}{w} \right)} - 1\\
&= (zR_T(z) + 1) + (wR_S(w) + 1) - \frac{zw}{G_{T,S}\left( R_T(z) + \frac{1}{z}, R_S(w) + \frac{1}{w}\right)} - 1 \\
&= 1 + zR_T(z) + wR_S(w) - \frac{zw}{G_{T, S}\left(R_T(z) + \frac{1}{z}, R_S(w) + \frac{1}{w}\right)}
\end{align*}
which is equation (\ref{eq:Voi-partial-R-transform}).
\end{rem}
\begin{rem}
If $T$ and $S$ are such that the pairs $(T, 1_\A)$ and $(1_\A, S)$ are bi-free, it is easy to see that Theorem \ref{thm:my-left-right-separated-R-Formula} holds.  Indeed, under these assumptions, 
\[
M_{T,S}(z,w) = M_T(z) M_S(w) \qqand C_{T,S}(z,w) = C_T(z) + C_S(w) - 1.
\]
Thus, by equation (\ref{eqn:moment-series-via-composition-with-cumulant-series}),
\begin{align*}
\frac{M_T(z)M_S(w)}{M_{T,S}(z,w)} + C_{T, S}(zM_T(z), wM_S(w)) &= 1 + (C_T(zM_T(z)) + C_S(wM_S(w)) - 1)\\
&= M_T(z) + M_S(z).
\end{align*}
\end{rem}
\begin{proof}[Proof of Theorem \ref{thm:my-left-right-separated-R-Formula}]
For $n, m \in \mathbb{N}\cup \{0\}$ with $n+m \neq 0$, using the notation in \ref{not:notation-for-voiculescu-result} notice
\begin{align*}
\varphi(T^nS^m) &= \sum_{\pi \in \bncs(\chi_{n,m})} \kappa_\pi(Z_1, \ldots, Z_{n+m}) + \sum_{\substack{\pi \in BNC(\chi_{n,m}) \\ \pi \notin \bncs(\chi_{n,m})}} \kappa_\pi(Z_1, \ldots, Z_{n+m}) \\
&= \varphi(T^n)\varphi(S^m) + \sum_{\substack{\pi \in BNC(\chi_{n,m}) \\ \pi \notin \bncs(\chi_{n,m})}} \kappa_\pi(Z_1, \ldots, Z_{n+m}).
\end{align*}
Let 
\[
\Theta_{n,m} = \sum_{\substack{\pi \in BNC(\chi_{n,m}) \\ \pi \notin \bncs(\chi_{n,m})}} \kappa_\pi(Z_1, \ldots, Z_{n+m}).
\]
Notice $\Theta_{0, m} = \Theta_{n,0} = 0$ for all $n,m \neq 0$.  Otherwise, for $n,m \in \mathbb{N}$, note every partition $\pi \in BNC(\chi_{n,m}) \setminus \bncs(\chi_{n,m})$ must have a block $W$ such that 
\[
W \cap \{1, \ldots, n\} \neq \emptyset \qqand W \cap \{n+1, \ldots, n+m\} \neq \emptyset
\]
(that is, $\pi$ has a block with both left and right indices).  Let $V_\pi$ denote the block of $\pi$ with both left and right indices such that $\min(V_\pi)$ is smallest among all blocks $W$ of $\pi$ with both left and right indices.  

Rearrange the sum in $\Theta_{n,m}$ by first choosing $t \in \{1,\ldots, n\}$, $s \in \{1,\ldots, m\}$, and $V \subseteq \{1,\ldots, n+m\}$ such that
\begin{align*}
V_\ell &:= V \cap \{1, \ldots, n\}=\{u_1 < u_2 < \cdots < u_t\} \qand\\
V_r &:= V \cap \{n+1, \ldots, n+m\} = \{v_1 < v_2 < \cdots < v_s\},
\end{align*}
and then sum over all $\pi \in BNC(\chi_{n,m}) \setminus \bncs(\chi_{n,m})$ such that $V_\pi = V$.  If one defines $u_0 = 0$, $v_0=n$, $u_{t+1} = n+1$, and $v_{s+1} = n+m+1$, the fact that $\pi \in BNC(\chi_{n,m}) \setminus \bncs(\chi_{n,m})$ implies if $V_\pi = V$ then no block of $\pi$ contains indices from both intervals $(u_{k_1}, u_{k_1+1})$ and $(u_{k_2}, u_{k_2+1})$ when $k_1 \neq k_2$, from both intervals $(v_{k_1}, v_{k_1+1})$ and $(v_{k_2}, v_{k_2+1})$ when $k_1 \neq k_2$, and from both intervals $(u_{k_1}, u_{k_1+1})$ and $(v_{k_2}, v_{k_2+1})$ unless $k_1 = t$ and $k_2 = s$.  In particular, examining all $\pi$ such that $V_\pi = V$, each $(t+s+1)$-tuple consisting of bi-non-crossing partitions on each of the sets $(u_k, u_{k+1})$ for $k \in \{0, 1, \ldots, t-1\}$, $(v_k, v_{k+1})$ for $k \in \{0,1, \ldots, s-1\}$, and $(u_t, u_{t+1}) \cup (v_s, v_{s+1})$ occurs precisely once.  Since rearranging the sum gives
\begin{align*}
\Theta_{n,m} &= \sum^n_{t=1} \sum^m_{s=1} \sum_{\substack{V \\V_\ell = \{u_1 < u_2 < \cdots < u_t\} \\
V_r = \{v_1 < v_2 < \cdots < v_s\}   }}\sum_{\substack{\pi \in BNC(\chi_{n,m}) \\ \pi \notin \bncs(\chi_{n,m}) \\ V_\pi = V} } \kappa_\pi(Z_1, \ldots, Z_{n+m})
\end{align*}
and since the right most sum equals
\[
\kappa_{t,s}(T, S) \varphi(T^{n-u_t}S^{m-v_s})\prod^t_{k=1}\varphi(T^{u_k - u_{k-1} - 1}) \prod^s_{k=1} \varphi(S^{v_k - v_{k-1} - 1}),
\]
we obtain $\Theta_{n,m}$ equals
\[
\sum_{\substack{t \in \{1,\ldots, n\} \\i_0, i_1, \ldots, i_t \in \{0, 1, \ldots, n\} \\ i_0 + i_1 + \cdots + i_t = n-t}}  \sum_{\substack{s \in \{1,\ldots, m\} \\j_0, j_1,\ldots, j_s \in \{0, 1, \ldots, m\} \\ j_0 + j_1 + \cdots + j_s = m-s}} \kappa_{t,s}(T, S) \varphi(T^{i_0}S^{j_0}) \prod^t_{k=1}  \varphi(T^{i_k}) \prod^s_{k=1} \varphi(S^{j_k}).
\]

Using the above, notice
\begin{align*}
M_{T,S}(z,w) &= 1 + \sum_{n\geq 1} \varphi(T^n)z^n + \sum_{m\geq 1} \varphi(S^m)w^m + \sum_{n,m\geq 1} \varphi(T^nS^m) z^n w^m \\
&= M_T(z)M_S(w) + \sum_{n,m\geq 1} \Theta_{n,m} z^n w^m.
\end{align*}
By rearranging the remaining sum on the right to collect each $\kappa_{t,s}(T,S)$ term, we obtain
\begin{align*}
\sum_{n,m\geq 1}& \Theta_{n,m} z^n w^m \\
&= \sum_{s,t\geq 1} \kappa_{t,s}(T,S) M_{T,S}(z,w)  M_T(z)^t M_S(w)^s  z^t w^s\\
&= \left(\sum_{s,t\geq 1} \kappa_{t,s}(T,S)   (zM_T(z))^t (sM_S(w))^s\right)M_{T,S}(z,w) \\
&= \left(C_{T,S}(zM_T(z), sM_S(w)) - C_T(zM_T(z)) - C_S(wM_S(w)) + 1\right)M_{T,S}(z,w) \\
&= \left(C_{T,S}(zM_T(z), sM_S(w)) - M_T(z) - M_S(w) + 1\right)M_{T,S}(z,w).
\end{align*}
By combining these equations and applying simple algebraic manipulations, the result follows.
\end{proof}

\subsection{Partial $\boldsymbol{R}$-Transform for Bi-Free Boolean Systems}

This section develops an additional partial $R$-transform in a specific context which provides a more general result than that of \cite{SW1997}*{Proposition 2.1} for Boolean independent algebras.  

To produce such a partial $R$-transform, the technique from Section \ref{sec:partial-R-transform-of-Voi} are utilized along with some simple combinatorics.  We begin with the following notation and definitions.
\begin{defn}
Let $\chi : \{1,\ldots, n\} \to \{\ell, r\}$ and let $m \in \mathbb{N}\cup \{0\}$.  For $\theta \in \{\ell, r\}$, we say that \emph{$\chi$ starts with $\theta$ precisely $m$ times} if 
\begin{enumerate}
\item $m=n$ and $\chi(k) = \theta$ for all $k$, or
\item $m < n$, $\chi(k) = \theta$ for all $1 \leq k \leq m$, and $\chi(m+1) \neq \theta$.  
\end{enumerate} 
Similarly, we say that \emph{$\chi$ ends with $\theta$ precisely $m$ times} if 
\begin{enumerate}
\item $m=n$ and $\chi(k) = \theta$ for all $k$, or
\item $m < n$, $\chi(k) = \theta$ for all $n-m < k \leq n$, and $\chi(n-m) \neq \theta$.  
\end{enumerate} 
Let $\S_{m,\theta}$ denote all $\chi : \{1,\ldots, n\} \to \{\ell, r\}$ over all $n \in \mathbb{N}$ such that $\chi$ starts with $\theta$ precisely $m$ times and let  $\E_{m,\theta}$ denote all $\chi : \{1,\ldots, n\} \to \{\ell, r\}$ over all $n \in \mathbb{N}$ such that $\chi$ ends with $\theta$ precisely $m$ times.
\end{defn}
\begin{defn}
\label{defn:notation-for-boolean-RTransform}
Let $(\A, \varphi)$ be a non-commutative probability space and let $T, S \in \A$ be arbitrary elements.  We view $T$ as a left element and $S$ as a right element.  For $\chi : \{1,\ldots, n\} \to \{\ell, r\}$, we define
\[
\varphi_\chi(T, S) = \varphi(Z_1 \cdots Z_n) \qqand \kappa_{\chi}(T, S) = \kappa_{\chi}(Z_1, \ldots, Z_{n})
\]
where 
\[
Z_k  = \left\{
\begin{array}{ll}
T & \mbox{if } \chi(k) = \ell \\
S & \mbox{if } \chi(k) = r
\end{array} \right. .
\]
Define $L_\chi = |\{k \, \mid \, \chi(k) =\ell\}|$  and $R_\chi = |\{k \, \mid \, \chi(k) = r\}|$.

The \emph{commutative moment series of $(T, S)$} is the power series
\[
M^c_{T, S}(z,w) = 1 + \sum_{n\geq 1} \sum_{\chi : \{1, \ldots, n\} \to \{\ell, r\}} \varphi_\chi(T, S) z^{L_\chi}w^{R_\chi}.
\]

For $m \in \mathbb{N} \cup \{0\}$ and $\theta \in \{\ell, r\}$, the \emph{$\theta$-starting of length $m$ moment series of $(T, S)$} is the power series
\[
M_{T,S}^{m, \theta}(z,w) = \sum_{\chi \in \S_{m,\theta}} \varphi_\chi(T, S) z^{L_\chi}w^{R_\chi}.
\]
Similarly,  the \emph{$\theta$-ending of length $m$ cumulant series of $(T, S)$} is the power series
\[
C_{T,S}^{m, \theta}(z,w) = \sum_{\chi \in \E_{m,\theta}} \kappa_\chi(T, S)z^{L_\chi}w^{R_\chi}.
\]
\end{defn}

\begin{thm}
\label{thm:my-mixed-R-Formula}
Let $(\A, \varphi)$ be a non-commutative probability space and let $T, S \in \A$ be arbitrary elements such that $\varphi(T^n) = \varphi(S^n) = 0$ for all $n\geq 1$.  Then
\begin{align}
\label{eq:my-combined-R-Formula} M^c_{T,S}(z,w) = 1 &+ \sum_{m\geq 1} C^{m,\ell}_{T,S}(z,w)\left(1 + \sum_{k\geq 0} {m+k \choose k} M^{k,r}_{T,S}(z,w)   \right)  \\
& +\sum_{m\geq 1} C^{m,r}_{T,S}(z,w) \left(1 + \sum_{k\geq 0}  {m+k \choose k} M^{k,\ell}_{T,S}(z,w)    \right).  \notag
\end{align}
\end{thm}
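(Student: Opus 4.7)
The plan is to expand $\varphi_{\tilde\chi}(T,S)$ via the scalar moment-cumulant formula $\varphi_{\tilde\chi} = \sum_{\pi \in BNC(\tilde\chi)} \kappa_\pi$ together with the bi-multiplicative factorization $\kappa_\pi = \prod_{V \in \pi} \kappa_{\tilde\chi|_V}$, discard all partitions containing a pure block, and then sort the remaining sum according to the outer block of $\pi$ in the $\prec_{\tilde\chi}$-non-crossing sense. Since $\varphi(T^n) = \varphi(S^n) = 0$ for all $n \geq 1$, the standard free M\"{o}bius inversion forces $\kappa_n(T) = \kappa_n(S) = 0$, so $\kappa_{\tilde\chi|_V}$ vanishes whenever $\tilde\chi|_V$ is purely left or purely right. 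Consequently $\varphi_{\tilde\chi} = \sum_\pi \kappa_\pi$ ranges only over $\pi \in BNC(\tilde\chi)$ with every block mixed; in particular $\varphi_{\tilde\chi} = 0$ whenever $\tilde\chi$ has only one type.

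For $\tilde\chi$ containing at least one $\ell$ and one $r$, and an all-mixed $\pi \in BNC(\tilde\chi)$, I would analyze the outer block $V$ of $\pi$ in $\prec_{\tilde\chi}$-coordinates. Any $\prec_{\tilde\chi}$-gap of $V$ lying entirely on one spine consists of positions of a single type, so the sub-partition filling it would introduce a pure block, contradicting the all-mixed hypothesis. Hence every same-spine gap of $V$ is empty. Translating back to the original order, this forces the $\ell$-positions of $V$ to be the first $m$ $\ell$'s of $\tilde\chi$ and the $r$-positions of $V$ to be the first $k$ $r$'s, for some $m, k \geq 1$ (the extreme $V = \tilde\chi$ corresponds to $m = L_{\tilde\chi}$, $k = R_{\tilde\chi}$). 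The unique remaining gap---a mixed $\prec_{\tilde\chi}$-interval---then hosts an arbitrary bi-non-crossing all-mixed partition on $V^c$, and summing over it against $\kappa$ produces $\varphi_{\tilde\chi|_{V^c}}$ (with the convention $\varphi_\emptyset := 1$). This yields the recursion
\[
\varphi_{\tilde\chi}(T,S) = \sum_V \kappa_{\tilde\chi|_V}(T,S)\, \varphi_{\tilde\chi|_{V^c}}(T,S).
\]

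Multiplying by $z^{L_{\tilde\chi}} w^{R_{\tilde\chi}}$ and summing over $\tilde\chi$ re-expresses $M^c_{T,S}$ as a sum indexed by pairs of sub-patterns $(\chi_V, \chi_{V^c})$, each weighted by the number of $\tilde\chi$ producing it. This count equals the number of shuffles of the sequences $\chi_V$ and $\chi_{V^c}$ obeying the constraints that every $V$-$\ell$ precedes every $V^c$-$\ell$ and every $V$-$r$ precedes every $V^c$-$r$. A short case analysis on the trailing run of $\chi_V$ and the leading run of $\chi_{V^c}$ gives: when the two runs are of opposite types (say $\chi_V$ ends in $\ell^{m'}$ while $\chi_{V^c}$ starts with $r^{k'}$), the count is $\binom{m'+k'}{k'}$, reflecting the freedom to interleave those two runs; in every other configuration (same-type runs, or $\chi_{V^c}=\emptyset$) the count is $1$.

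Grouping by the trailing type of $\chi_V$ now assembles the $\chi_V$-ends-in-$\ell$ contributions into $\sum_{m \geq 1} C^{m,\ell}_{T,S}(z,w)\bigl(1 + \sum_{k \geq 0}\binom{m+k}{k} M^{k,r}_{T,S}(z,w)\bigr)$: the standalone $1$ absorbs $\chi_{V^c} = \emptyset$; the $k = 0$ summand $M^{0,r}_{T,S}$, which enumerates $\chi_{V^c}$ starting with $\ell$, carries multiplicity $\binom{m}{0} = 1$ matching the same-type shuffle count; and $k \geq 1$ supplies the opposite-type binomial $\binom{m+k}{k}$. The symmetric grouping for $\chi_V$ ending in $r$ yields the other summand, and the $\tilde\chi = \emptyset$ contribution becomes the standalone $1$, completing the identity. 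The main anticipated obstacle is the outer-block characterization: carefully translating between the $\prec_{\tilde\chi}$-order and the original order to show that every same-spine gap must be empty, thereby forcing the first-$m$-$\ell$'s-plus-first-$k$-$r$'s structure on $V$.
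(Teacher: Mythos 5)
Your argument is correct and follows essentially the same route as the paper: both reduce to the recursion $\varphi_\chi(T,S) = \sum_{m,k}\kappa_{\chi|_{V}}(T,S)\,\varphi_{\chi|_{V^c}}(T,S)$ with $V$ consisting of the first $m$ left and first $k$ right indices, and both finish with the same $\binom{m_1+m_2}{m_2}$ shuffle count for interleaving the trailing run of $\chi|_V$ with the leading run of $\chi|_{V^c}$. The only (cosmetic) difference is that you discard every partition with a pure block at the outset via $\kappa_n(T)=\kappa_n(S)=0$ and single out the outer block, whereas the paper removes only the vertically split partitions, singles out the first mixed block, and uses $\varphi(T^k)=\varphi(S^k)=0$ to force the pure gaps to be empty.
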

\begin{rem}
Both Theorem \ref{thm:my-left-right-separated-R-Formula} and Theorem \ref{thm:my-mixed-R-Formula} have embedded conditions which reduce the combinatorics of bi-non-crossing partitions.  Indeed Theorem \ref{thm:my-left-right-separated-R-Formula} requires all left operators occur before all right operators which enables a collection of $\kappa_{n,m}(T, S)$ terms.  Similarly, the proof of Theorem \ref{thm:my-mixed-R-Formula} uses the condition $\varphi(T^n) = \varphi(S^n) = 0$ to determine the coefficient of each $\kappa_{\chi}(T, S)$.
\end{rem}
\begin{proof}[Proof of Theorem \ref{thm:my-mixed-R-Formula}]
First, consider a fixed $\chi : \{1,\ldots, n\} \to \{\ell, r\}$.  Using the notation in Definition \ref{defn:notation-for-boolean-RTransform}, notice
\begin{align*}
\varphi_\chi(T, S) &= \sum_{\pi \in \bncs(\chi)} \kappa_\pi(Z_1, \ldots, Z_{n}) + \sum_{\substack{\pi \in BNC(\chi) \\ \pi \notin \bncs(\chi)}} \kappa_\pi(Z_1, \ldots, Z_{n}) \\
&= \varphi(T^{L_\chi})\varphi(S^{R_\chi}) + \sum_{\substack{\pi \in BNC(\chi) \\ \pi \notin \bncs(\chi)}} \kappa_\pi(Z_1, \ldots, Z_{n})\\
&=  \sum_{\substack{\pi \in BNC(\chi) \\ \pi \notin \bncs(\chi)}} \kappa_\pi(Z_1, \ldots, Z_{n}).
\end{align*}
Notice $\varphi_\chi(T, S) = 0$ if $L_\chi = 0$ or $R_\chi = 0$.  Otherwise, by repeating arguments similar to those used in Theorem \ref{thm:my-left-right-separated-R-Formula} (i.e. summing over all $\pi$ with the same first block $V_\pi$ with both left and right nodes) along with the fact that $\varphi(T^k) = \varphi(S^k) = 0$ for all $k\geq 1$, we see that if
\[
\chi^{-1}(\{\ell\}) = \{ i_1 < i_2 < \cdots < i_{L_\chi} \} \qqand \chi^{-1}(\{r\}) = \{j_1 < j_2 < \cdots < j_{R_\chi}\},
\]
then
\[
\varphi_\chi(T, S) = \sum^{L_\chi}_{t=1} \sum^{R_\chi}_{s=1} \kappa_{\chi|_{V^\chi_{t,s}}}(T, S) \varphi_{\chi|_{\left(V^\chi_{t,s}\right)^c}}(T, S)
\]
where
\[
V^\chi_{t,s} = \{i_1, i_2, \ldots, i_t\} \cup \{j_1, j_2, \ldots, j_s\}.
\]

Using the above, and the fact that $\varphi(T^k) = \varphi(S^k) = 0$ for all $k \geq 1$, we see that
\begin{align*}
M^c_{T,S}(z,w) = 1 + \sum_{n\geq 1} \sum_{\chi : \{1,\ldots, n\} \to \{\ell, r\}} \sum^{L_\chi}_{t=1} \sum^{R_\chi}_{s=1} \kappa_{\chi|_{V^\chi_{t,s}}}(T, S) \varphi_{\chi|_{\left(V^\chi_{t,s}\right)^c}}(T, S)z^{L_\chi} w^{R_\chi}.
\end{align*}
Equation (\ref{eq:my-combined-R-Formula}) then follows from the above equation since if $\theta_1, \theta_2 \in \{\ell, r\}$ are such that $\theta_1 \neq \theta_2$, if $m_1, m_2 \in \mathbb{N} \cup \{0\}$ with $m_1 \neq 0$, if $\chi_1 \in \E_{m_1, \theta_1}$, and if $\chi_2 \in \S_{m_2, \theta_2}$, then the number of $\chi : \{1,\ldots, n\} \to \{\ell, r\}$ such that $\chi|_{V^\chi_{t,s}} = \chi_1$ and $\chi|_{\left(V^\chi_{t,s}\right)^c} = \chi_2$ for some $t, s$ is precisely ${m_1 + m_2 \choose m_2}$ (the number of ways to uniquely arrange last $m_1$ nodes of $\chi_1$ valued $\theta_1$ with the first $m_2$ nodes of $\chi_2$ valued $\theta_2$).
\end{proof}
\begin{rem}
Repeating the proof of Theorem \ref{thm:my-mixed-R-Formula}, one can easily see \cite{SW1997}*{Proposition 2.1}.  Indeed for $T = T_{Z}$ and $S = S_{1}$ as in Construction \ref{cons:Boolean}, we see that $\varphi_\chi(T, S) = 0$ unless $\chi$ is alternating.  Thus only alternating $\chi$ need be considered in the proof of Theorem \ref{thm:my-mixed-R-Formula}.  It is then clear that for
\[
\kappa_{\chi|_{V^\chi_{t,s}}}(T, S) \varphi_{\chi|_{\left(V^\chi_{t,s}\right)^c}}(T, S)
\]
to be non-zero, $t=s$ is required as $T^2 = S^2 = 0$ and $\varphi(STSTS \cdots) = 0$.  In this case, by Remark \ref{rem:boolean-cumulant-via-LR}, 
\[
\kappa_{\chi|_{V^\chi_{t,t}}}(T, S) \varphi_{\chi|_{\left(V^\chi_{t,t}\right)^c}}(T, S) = \kappa_{\text{B}, n}(Z) \varphi(Z^{n-t})
\]
where $\kappa_{\text{B}, n}(Z)$ is the $n^{\text{th}}$ Boolean cumulant of $Z$.  Therefore, by collecting terms, we see that
\begin{align*}
1 + \sum_{n\geq 1} \varphi(Z^n)(zw)^n &= 1 + \sum_{k\geq 1} \sum_{\substack{\chi : \{1,\ldots, 2k\} \to \{\ell, r\} \\ \chi \text{ alternating}}} \varphi_\chi(T, S) (zw)^k \\
&= 1 + \sum_{k\geq 1} \sum^k_{t=1} \kappa_{\text{B}, k}(Z) \varphi(Z^{k-t}) (zw)^k \\
&= 1 + \left(\sum_{k\geq 1}  \kappa_{\text{B}, k}(Z)(zw)^n\right) \left(1 + \sum_{n\geq 1} \varphi(Z^n)(zw)^n   \right)
\end{align*}
which is precisely the formula of \cite{SW1997}*{Proposition 2.1}.
\end{rem}

\section*{Acknowledgements}

The author would like to thank Dan Voiculescu, Dimitri Shlyakhtenko, and Serban Belinschi for posing questions which led to this paper.

\end{document}